\newtheorem{Def}{Definition}
\newtheorem{Prop}{Proposition}
\newtheorem{Rem}{Remark}
\newtheorem{Thm}{Theorem}
\newtheorem{Lemma}{Lemma}
\newtheorem{Cor}{Corollary}
\newtheorem{assumptions}{Assumption}
\newcommand{\N}{\mathbb{N}}
\newcommand{\R}{\mathbb{R}}
\newcommand{\sub}{\partial}
\newenvironment{proof}{{\it Proof.}}{\hfill$\square$\vspace{0.3cm}\\}
\def\hk{h^{(k)}}
\def\thk{h^{(k)}_\gamma}
\newcommand{\projalg}[1]{\ifcase #1\or SGP\or PDHG\or PDHG-D\or CBGP\or SGP\or SS\or WSS2}%
\newcommand{\eqref}[1]{(\ref{#1})}%
\newlist{algorithmsteps}{enumerate}{1}
\setlist[algorithmsteps,1]{
  label={\textsc{Step}~\arabic*},
  leftmargin=*,
  align=left,
  labelsep=2mm,
}
\newcommand{\ve}[1]{ #1}
\def\x{ {\ve x}}  
\def\y{ {\ve y}}
\def\u{ {\ve u}}\def\lamk{{\lambda_k}}
\def\w{{\ve w}}
\def\z{ {\ve z}}
\def\g{ {\ve g}}
\def\d{{\ve d}}
\def\xk{ \x^{(k)}}
\def\yk{ \y^{(k)}}
\def\zk{ \z^{(k)}}
\def\yk{ \y^{(k)}}
\def\dk{ \d^{(k)}}
\def\xkk{ \x^{(k+1)}}
\def\tykm{ \ty^{(k-1)}}
\def\ak{{\alpha_k}}
\def\kinN{{k\in\mathbb N}}
\def\dom{\mathrm{dom}}
\def\R{\mathbb R}
\def\kinN{{k\in\N}}
\def\prox{{\mathrm{prox}}}
\def\k{^{(k)}}
\def\tyk{\tilde \y^{(k)}}
\def\v{\ve v}  \def\vk{\v^{(k)}}
\def\u{\ve u}
\def\w{\ve w}
\def\ty{\tilde \y}
\def\bR{\bar {\R}} \def\dist{\mathrm{dist}}
\def\wk{\w^{(k)}}
\def\c{\ve c}
\def\uz{{\ve u}^0}
\def\mytilde{\raise.17ex\hbox{$\scriptstyle\sim$}} 
\def\Psik{\Psi^{(k)}}
\newcommand{\silviacorr}[1]{\textcolor{black}{#1}}
\begin{document}

\title[The VMILAn method]{On the convergence of a linesearch based proximal-gradient method for nonconvex optimization}

\author{S Bonettini$^1$, I Loris$^2$, F Porta$^1$, M Prato$^3$ and S Rebegoldi$^3$}
\address{$^1$ Dipartimento di Matematica e Informatica, Universit\`{a} degli Studi di Ferrara, Via Saragat 1, 44121 Ferrara, Italy}
\address{$^2$ D\'epartement de Math\'ematique, Universit\'e libre de Bruxelles, Boulevard du Triomphe, 1050 Bruxelles, Belgium}
\address{$^3$ Dipartimento di Scienze Fisiche, Informatiche e Matematiche, Universit\`{a} degli Studi di Modena e Reggio Emilia, Via Campi 213/b, 41125 Modena, Italy}
\eads{\mailto{silvia.bonettini@unife.it}}

\begin{abstract}
We consider a variable metric linesearch based proximal gradient method for the minimization of the sum of a smooth, possibly nonconvex function plus a convex, possibly nonsmooth term. We prove convergence of this iterative algorithm to a critical point if the objective function satisfies the Kurdyka-{\L}ojasiewicz property at each point of its domain, under the assumption that a limit point exists. The proposed method is applied to a wide collection of image processing problems and our numerical tests show that our algorithm results to be flexible, robust and competitive when compared to recently proposed approaches able to address the optimization problems arising in the considered applications.
\end{abstract}

% Uncomment for PACS numbers
\ams{65K05, 90C30, 68U10}
%
% Uncomment for keywords
\vspace{2pc}
\noindent{\it Keywords}: Proximal gradient methods, Variable metric, Linesearch methods, Image processing applications

% Uncomment for Submitted to journal title message
\submitto{\IP}
%
% Uncomment if a separate title page is required
%\maketitle

\section{Introduction}\label{sec1}

In inverse problems, direct inversion formulas and associated fast reconstruction algorithms (such as filtered backprojection in computed tomography) are available only for a restricted set of problems. In many cases the solution to an inverse problem is reformulated in terms of an optimization problem, in which the objective function includes a distance-like term $f_0(x)$ describing the relation between the unknown object $x$ and the measured data and possibly an additional function $f_1(x)$ aimed at restricting the search of the object to desirable properties specified by $f_1(x)$. The resulting minimization problem has the form
\begin{equation}\label{minprob}
\min_{x\in \R^n} f(x)\equiv f_0(x) + f_1(x).
\end{equation}
The development of efficient numerical optimization algorithms for problems of this type is of great importance for the practical resolution of inverse problems.

Two of the most widely used assumptions in the formulation of inverse problems are the Gaussian nature of the noise on the data, and the linearity of the relationship between measured data and unknown object. Together, they lead to convenient linear least squares terms which, when combined with popular regularizer(s), make problem \eqref{minprob} quadratic or, at least, convex. However, nonlinearity of the relationship between measurements and unknowns or non-Gaussianity of the noise can lead to nonconvex optimization problems. These problems are more difficult to solve than their convex counterparts and algorithms for their numerical solution are less developed.

Examples of nonlinearity can be found in many problems. Blind deconvolution \cite{ayers1988iterative}, where both object and point spread function need to be recovered, is a prime example. More generally, the variational formulation of non-negative matrix factorization \cite{Lee1999} leads to nonconvex optimization problems.

The maximum-likelihood formulation for the \emph{simultaneous} recovery of the activity and the attenuation correction factors in time-of-flight positron emission tomography \cite{6264102} is another example of a nonconvex optimization problem encountered in inverse problems.
In a similar vein, quantitative photoacoustic tomography \cite{0266-5611-29-7-075006} deals with the problem of reconstructing not only the distribution of initial pressure from measurements of propagated acoustic waves, but also seeks to determine chromophore concentration distributions, a nonlinear ill-posed problem.

In global seismic tomography \cite{Nolet2008} scientists typically try to image the seismic wave speed in the Earth's mantle, itself a proxy for temperature, based on Earthquake arrival times and an approximate linear relationship between both. In local seismic tomography scientists use seismic arrays for imaging the Crust and Upper Mantle. In particular, reflection tomography utilizes artificial tremors for the reconstruction of shallow subsurface features and is inherently nonlinear \cite{Tarantola1984}.

Optical flow, i.e. the recovery of motion from images, is also a nonlinear ill-posed problem. Horn and Schunck \cite{Horn1981} studied a variational formulation of the problem while Brox et al. \cite{Brox2004} avoid linearisation in the data term to allow for large displacements and obtain smaller errors than a series of other methods.

Several inverse problems in magnetic resonance imaging and tomography also give rise to nonlinear equations. Few examples are velocity-encoded MRI \cite{Valkonen2014,Benning2014} where the complex phases of images are related to the velocity of the imaged fluid, the treatment of the Stejskal--Tanner equation in diffusion tensor imaging \cite{JMRI:JMRI1076,Wang2004} and the reconstruction problem arising from phase contrast tomography \cite{Repetti_etal_2014}.

Gaussian noise leads to least squares, but often Gaussianity cannot be assumed and therefore least squares is not appropriate. Examples of non-Gaussianity are also abundant in inverse problems. One example is data obtained from photon counting; this naturally leads to Poisson noise and the associated  Kullback-Leibler divergence, which is a convex function. However, other noise models lead to nonconvex problems, even when the relation between noiseless data and unknown object is linear. Examples that will be discussed in the applications section are Cauchy noise and signal dependent noise.

If the data misfit term $f_0$ is differentiable and the penalty term $f_1$ is sufficiently simple, the structure of the objective function in \eqref{minprob} can be exploited by the class of proximal-gradient (or forward-backward) algorithms \cite{Combettes-Vu-2014}, which are described by the generic first order iteration
\begin{equation}\label{iter}
x^{(k+1)} =  x^{(k)} +\lambda_k (y^{(k)} - x^{(k)}),
\end{equation}
where $y^{(k)}$ is given by
\begin{equation}\label{yk}
y^{(k)} = \prox_{\alpha_k f_1}^{D_k} (\xk-\alpha_kD_k^{-1}\nabla f_0(\xk)),
\end{equation}
$\alpha_k$ is a scalar steplength parameter, $D_k$ is a symmetric positive definite matrix and $\prox_{\alpha f_1}^D(\z)$ is the proximal (or resolvent) operator of $f_1$ associated to the parameter $\alpha$ and to the matrix $D$, i.e.
\begin{equation}\label{proj}
\prox_{\alpha f_1}^D(\z) = \arg\min_{\x\in\R^n} f_1(\x) + \frac{1}{2\alpha} (\x-\z)^TD(\x-\z).
\end{equation}
Formally, several recently proposed methods are described by iteration \eqref{iter}--\eqref{proj} (see for example \cite{Attouch-etal-2013,Bonettini-Loris-Porta-Prato-2015,Chouzenoux-etal-2014,Combettes-Vu-2014,Frankel-etal-2015}), even if the meaning of the parameters $\lamk$, $\ak$, $D_k$ substantially differs from one method to the other. In this paper we further develop the approach proposed in \cite{Bonettini-Loris-Porta-Prato-2015}, called VMILA (Variable Metric Inexact Line--search Algorithm), where $\lamk$ is determined by a backtracking loop in order to satisfy an Armijo--type inequality, while $\ak$ and $D_k$ should be considered as ``free'' parameters which can be tuned for improving the algorithmic performances.

The main strength of VMILA consists in allowing the use of well performing, adaptive strategies to choose $\ak, D_k$, originally proposed in the context of smooth optimization, for improving the practical convergence behaviour. When $f_1$ reduces to the indicator function of a convex set, VMILA reduces to the scaled gradient projection method (SGP) which has been used in the last years to address effectively several image reconstruction problems in astronomy and microscopy \cite{Benvenuto-etal-2010,Bonettini-Prato-2010a,Bonettini-etal-2013b,Bonettini-Prato-2015a,Loris-etal-2009,Prato-etal-2012,Zanella-etal-2009,Zanella-etal-2013}. An alternative approach for accelerating forward--backward methods consists in adding an extrapolation step; this idea has been first proposed in \cite{Nesterov-2005} and recently developed in \cite{Beck-Teboulle-2009a,Ochs-etal-2014,Villa-etal-2013}.

From the theoretical point of view, the general convergence result \cite{Bonettini-Loris-Porta-Prato-2015} on the VMILA sequence states that all its limit points are stationary for problem \eqref{minprob}, provided that both the steplength $\alpha_k$ and the eigenvalues of $D_k$ are chosen in prefixed positive intervals $[\alpha_{\min},\alpha_{\max}]$ and $[\frac{1}{\mu},\mu]$, respectively. Convergence of the sequence to a minimum point of \eqref{minprob} has been proved for convex objective functions by choosing suitable scaling matrices sequences.

In the following sections we give better insight into the theoretical convergence properties when the objective function in \eqref{minprob} is not necessarily convex but satisfies the Kurdyka--{\L}ojasiewicz (KL) property \cite{Lojasiewicz-1963,Lojasiewicz-1993,Kurdyka-1998}. In particular, we propose a new variant of VMILA and we demonstrate that under the KL assumption, when the gradient of $f_0$ is Lipschitz continuous and there exists a limit point of the iterates sequence, this point is stationary for \eqref{minprob} and the whole sequence converges to it. In our analysis we also address the case when the proximal point \eqref{proj} is computed inexactly. Finally several applications of the algorithm for the solution of imaging inverse problems are presented.

\section{The problem and some preliminaries}\label{sec2}

\subsection{Notations}

We denote the extended real numbers set as $\bR = \R\cup \{-\infty,+\infty\}$ and by $\R_{\geq0}$, $\R_{>0}$ the set of non-negative and positive real numbers, respectively. The norm of a vector $\x\in\R^n$, induced by a symmetric positive definite matrix $D$ is $\|\x\|_D= \sqrt{\x^TD\x}$. Given $\mu\geq 1$, we denote by ${\mathcal M}_\mu$ the set of all symmetric positive definite matrices with all eigenvalues contained in the interval $[\frac 1 \mu,\mu]$. For any $D\in {\mathcal M}_\mu$ we have that $D^{-1}$ belongs to ${\mathcal M}_\mu$ and
\begin{equation}\label{ine_norm}
\frac 1\mu\|\x\|^2 \leq \|\x\|^2_{D}\leq \mu\|x\|^2
\end{equation}
for any $\x\in\R^n$. The indicator function of a convex set $\Omega\subset\R^n$ is defined as
\begin{equation*}
\iota_\Omega(\x) = \left\{\begin{array}{cl} 0&\mbox{ if } \x\in\Omega \\ +\infty & \mbox{ if } \x\not\in\Omega\end{array}\right..
\end{equation*}
For $-\infty < \upsilon_1 < \upsilon_2 \leq +\infty$, we set
$[\upsilon_1 < f < \upsilon_2] = \{z \in \R^n: \upsilon_1 < f(z) < \upsilon_2 \}$. Moreover, we denote with ${\rm{dist}}(z,\Omega)$ the distance between a point $z$ and a set $\Omega \subset \R^n$, i.e. $\dist (\z,\Omega) = \inf_{\x\in \Omega}\|\x-\z\|$.

\subsection{Problem formulation}

In this paper we address the optimization problem
\begin{equation}\label{minprob_statement}
\min_{x\in \R^n} f(x)\equiv f_0(x) + f_1(x)
\end{equation}
under the following assumptions on the involved functions:
\begin{assumptions}\label{ass1}
\silviacorr{
(i) $f_1:\R^n\to\bR$ is proper, convex and lower semicontinuous.\\
(ii) $f_0:\R^n\to\R$ is smooth, i.e. continuously differentiable, on an open set $\Omega_0\supset \dom(f_1)$.\\
(iii) $f_0$ has an $L-$Lipschitz continuous gradient on $\dom(f_1)$ with $L>0$, i.e.
\begin{equation*}
\|\nabla f_0(x)-\nabla f_0(y)\|\leq L\|x-y\|, \ \forall \ x,y\in \dom(f_1).
\end{equation*}
(iv) $f = f_0+f_1$ is bounded from below.}
\end{assumptions}

\subsection{Notions of subdifferential calculus}
%We begin by recalling the definition of one-sided directional derivative and some basic notions of subdifferential calculus.
%\begin{Def} \cite[p.213]{Rockafellar-1970} Let $\Psi$ be any function from $\R^n$ to $\bR$. The \emph{one sided directional derivative} of $\Psi$ at $\x$ with respect to a vector $\d$ is defined as
%\begin{equation}\label{dir-der}
%\Psi'(\x;\d) = \lim_{\lambda \downarrow 0} \frac{\Psi(\x+\lambda \d)-\Psi(\x)}{\lambda}
%\end{equation}
%if the limit on the right-hand side exists in $\bR$.
%\end{Def}
%\begin{Rem}\label{rem1}
%(i) When $\Psi$ is smooth at $\x$, $\Psi'(\x;\d) = \nabla \Psi(\x)^T\d$.\\
%(ii) When $\Psi$ is convex and $\x\in\dom(\Psi)$, the limit at the right-hand side of \eqref{dir-der} exists for any $\d\in\R^n$ \cite[Theorem 23.1]{Rockafellar-1970}.\\
%(iii) From the previous remarks and the linearity of the directional derivative, any function $f=f_0+f_1$ satisfying Assumption \ref{ass1} is directionally derivable.
%\end{Rem}
\begin{Def}\label{def:nonconvex_subdiff}\cite[Definition 8.3]{Rockafellar-Wets-1998}
Let $\Psi$ be a function from $\R^n$ to $\bR$, and let $\x\in\dom(\Psi)$. The \emph{Fr\'{e}chet subdifferential} of $\Psi$ at $x$ is the set
\begin{equation*}
\hat\partial \Psi(\x) = \left\{\v\in\R^n : \liminf_{\y\to \x,\y\neq\x} \frac 1{\|\x-\y\|}(\Psi(\y)-\Psi(\x)-(\y-\x)^T\v)\geq 0\right\}.
\end{equation*}
Moreover, the \emph{subdifferential} of $\Psi$ at $\x$ is defined as
\begin{eqnarray*}
\partial \Psi(\x) = \{ &\v\in\R^n : \exists \{\yk\}_{k\in\N} \subset \R^n,\vk \in \hat\partial \Psi(\yk) \ \forall k\in \N \ {\rm{such}} \ {\rm{that}} \\
                       &\yk\to\x, \ \Psi(\yk)\to \Psi(\x) \ {\rm{and}} \ \vk\to\v \}.
\end{eqnarray*}
Finally, we define $\dom(\partial \Psi)=\{\x\in\dom(\Psi):\partial \Psi(\x) \neq \emptyset\}$.
\end{Def}
\begin{Rem}\label{rem2}
%(i) The set $\partial \Psi(\x)$ is closed for all $x\in \R^n$ \cite[Theorem 8.6, p. 302]{Rockafellar-Wets-1998}.\\
%(ii) Let $\{(\xk,v^{(k)})\}_{k\in \N}$ be a sequence in ${\rm{graph}} \ \partial \Psi=\{(\x,x^*)\in \R^n\times\R^n: x^*\in \partial \Psi(\x)\}$ and suppose that $(\xk,v^{(k)})\rightarrow (\x,v)$ as $k\rightarrow \infty$. If also $\Psi(\xk)\rightarrow \Psi(x)$ as $k\rightarrow \infty$, then $(\x,v)\in {\rm{graph}} \ \partial \Psi$.\\
(i) If $\x\in \R^n$ is a local minimizer of $\Psi$, then $0\in \partial \Psi(\x)$. If $\Psi$ is convex, this condition is also sufficient.\\
(ii) A point $\x\in \R^n$ is \emph{stationary} for $\Psi$ if $\x\in\dom(\Psi)$ and $0\in \partial\Psi(\x)$.
\end{Rem}
\begin{Lemma}\label{fenchelsub}
\cite[Proposition 8.12]{Rockafellar-Wets-1998} Let $\Psi:\R^n\rightarrow\bR$ be a proper, convex function. Then for any $\x\in \dom(\Psi)$
\begin{equation*}
\partial \Psi(\x)=\{\v\in \R^n: \Psi(\y) \geq \Psi(\x) + (\y-\x)^T\v \ \ \forall \y\in \R^n \}=\hat \partial \Psi(\x).
\end{equation*}
\end{Lemma}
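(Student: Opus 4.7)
The plan is to prove the two equalities separately. Let me denote by $\partial_c \Psi(\x) = \{\v \in \R^n : \Psi(\y) \geq \Psi(\x) + (\y-\x)^T\v \text{ for all } \y \in \R^n\}$ the classical convex subdifferential, so that the goal is to show $\partial \Psi(\x) = \partial_c \Psi(\x) = \hat\partial \Psi(\x)$.

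First I would establish $\hat\partial \Psi(\x) = \partial_c \Psi(\x)$. The inclusion $\partial_c \Psi(\x) \subseteq \hat\partial \Psi(\x)$ is immediate: if $\v \in \partial_c \Psi(\x)$, then $\Psi(\y) - \Psi(\x) - (\y-\x)^T\v \geq 0$ for every $\y$, so dividing by $\|\y-\x\|$ and taking the liminf as $\y \to \x$ preserves non-negativity. For the reverse inclusion, take $\v \in \hat\partial \Psi(\x)$ and fix an arbitrary $\y \in \R^n$; the trick is to approach $\x$ along the segment by setting $\y_t = \x + t(\y-\x)$ for $t \in (0,1)$. Convexity gives $\Psi(\y_t) - \Psi(\x) \leq t(\Psi(\y) - \Psi(\x))$, and a direct computation shows
\begin{equation*}
\frac{\Psi(\y_t) - \Psi(\x) - (\y_t - \x)^T\v}{\|\y_t - \x\|} \leq \frac{\Psi(\y) - \Psi(\x) - (\y-\x)^T\v}{\|\y - \x\|}.
\end{equation*}
Taking the liminf as $t \to 0^+$ on the left (which is $\geq 0$ by assumption) yields $\Psi(\y) \geq \Psi(\x) + (\y-\x)^T\v$.

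Next I would prove $\hat\partial \Psi(\x) = \partial \Psi(\x)$. The inclusion $\hat\partial \Psi(\x) \subseteq \partial \Psi(\x)$ is trivial from Definition 1 by taking the constant sequences $\yk \equiv \x$ and $\vk \equiv \v$. For the nontrivial inclusion, pick $\v \in \partial \Psi(\x)$ with associated sequences $\yk \to \x$, $\vk \in \hat\partial \Psi(\yk)$, $\Psi(\yk) \to \Psi(\x)$ and $\vk \to \v$. Applying the first equality at each $\yk$, $\vk \in \partial_c \Psi(\yk)$, hence for every $\z \in \R^n$,
\begin{equation*}
\Psi(\z) \geq \Psi(\yk) + (\z - \yk)^T \vk.
\end{equation*}
Letting $k \to \infty$ and using the convergences above gives $\Psi(\z) \geq \Psi(\x) + (\z-\x)^T\v$, so $\v \in \partial_c \Psi(\x) = \hat\partial \Psi(\x)$.

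I don't expect a serious obstacle here: the main point is the construction $\y_t = \x + t(\y-\x)$ that reduces the infinitesimal Fréchet condition to the global convex inequality. Lower semicontinuity of $\Psi$ is not even needed in the argument, since the hypotheses of Definition 1 on $\partial \Psi$ already build the continuity of $\Psi$ along $\yk$ into the definition.
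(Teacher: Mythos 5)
Your proof is correct. The paper itself gives no proof of this lemma --- it is imported verbatim as Proposition 8.12 of Rockafellar--Wets --- so there is nothing to compare against; your argument (reducing the Fr\'echet condition to the global convex inequality via the segment $\y_t=\x+t(\y-\x)$, then closing $\partial_c\Psi$ under the graph-convergence in Definition~\ref{def:nonconvex_subdiff} using $\Psi(\yk)\to\Psi(\x)$) is the standard textbook derivation and is sound, including your observation that lower semicontinuity is not needed.
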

\begin{Lemma}\label{sumrule}
\cite[Exercise 8.8(c)]{Rockafellar-Wets-1998} Let $f=f_0+f_1$ be any function satisfying Assumption \ref{ass1}. Then, for any $\x\in \dom(f)$
\begin{equation*}
\partial f(\x) = \{\nabla f_0(\x)\} + \partial f_1(\x).
\end{equation*}
%\begin{itemize}
%\item[(i)] $\partial f(\x) = \{\nabla f_0(\x)\} + \partial f_1(\x)$.
%\item[(ii)] $\hat \partial f(\x)=\partial f(\x)$.
%\end{itemize}
\end{Lemma}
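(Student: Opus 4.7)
The plan is to establish the two inclusions separately, exploiting two facts about smoothness: first, the Fr\'echet subdifferential of a $C^1$ function coincides with its gradient; second, for a convex lower semicontinuous function $f_1$, the convex, Fr\'echet and limiting subdifferentials all agree (via Lemma \ref{fenchelsub}), which gives $\partial f_1$ a closed graph behavior amenable to passing to the limit.

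For the inclusion $\{\nabla f_0(\x)\} + \partial f_1(\x) \subseteq \partial f(\x)$, I would take $\w = \nabla f_0(\x) + \v$ with $\v \in \partial f_1(\x)$, which by Lemma \ref{fenchelsub} satisfies the Fr\'echet inequality at $\x$. Using $C^1$ smoothness of $f_0$ to write $f_0(\y) - f_0(\x) - (\y-\x)^T \nabla f_0(\x) = o(\|\y-\x\|)$, I would add this expansion to the Fr\'echet inequality for $f_1$ and divide by $\|\y-\x\|$; taking the liminf as $\y\to \x$ yields $\w \in \hat\partial f(\x) \subseteq \partial f(\x)$.

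For the reverse inclusion, I would take $\w \in \partial f(\x)$, so by Definition \ref{def:nonconvex_subdiff} there are sequences $\yk \to \x$ with $f(\yk)\to f(\x)$ and $\vk \in \hat\partial f(\yk)$ with $\vk \to \w$. The first key step is a Fr\'echet sum rule at each $\yk$: since $f_0$ is $C^1$ on a neighborhood of $\dom(f_1)$, the same ``subtract the Taylor expansion'' argument used above (run in reverse) shows that $\vk - \nabla f_0(\yk) \in \hat\partial f_1(\yk) = \partial f_1(\yk)$. By continuity of $\nabla f_0$, the sequence $\uk := \vk - \nabla f_0(\yk)$ converges to $\w - \nabla f_0(\x)$. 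The second key step is to note that $f_0(\yk)\to f_0(\x)$ by continuity, and since $f(\yk)\to f(\x)$ we also get $f_1(\yk) \to f_1(\x)$. Then applying the convex subdifferential inequality $f_1(\z) \geq f_1(\yk) + (\z-\yk)^T \uk$ for all $\z$ and passing to the limit $k\to\infty$ yields $f_1(\z) \geq f_1(\x) + (\z-\x)^T (\w - \nabla f_0(\x))$, so that $\w - \nabla f_0(\x) \in \partial f_1(\x)$ by Lemma \ref{fenchelsub}.

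The main obstacle I expect is the Fr\'echet sum rule itself: subdifferential calculus in the nonconvex setting generally requires qualification conditions, and the clean identity $\hat\partial(f_0 + f_1) = \nabla f_0 + \hat\partial f_1$ works here precisely because $f_0$ is differentiable (so its first-order perturbation is absorbed into the ``$o(\|\y-\x\|)$'' term without any issue). The rest is routine once this is set up, since the convergence $f_1(\yk)\to f_1(\x)$ is given for free by continuity of $f_0$ together with the convergence of $f(\yk)$ built into the definition of $\partial f$.
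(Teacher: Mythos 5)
Your argument is correct: the forward inclusion via the global convex subgradient inequality plus the first-order expansion of $f_0$, and the reverse inclusion via the Fr\'echet sum rule $\hat\partial(f_0+f_1)(\y)=\{\nabla f_0(\y)\}+\hat\partial f_1(\y)$ at each $\yk$ followed by passing to the limit in the convex subgradient inequality (using $f_1(\yk)\to f_1(\x)$, which follows from $f(\yk)\to f(\x)$ and continuity of $f_0$), is exactly the standard proof of the cited result. The paper itself offers no proof---it simply invokes \cite[Exercise 8.8(c)]{Rockafellar-Wets-1998}---so your reconstruction is the intended argument and contains no gaps.
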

%When $\Psi$ is proper, lower semicontinuous and convex, the previous definition of the subdifferential set reduces to the standard one \cite[Lemma 4.3]{Ochs-etal-2014}, which is reported below in its general form.
\begin{Def}\label{def:convex_subdiff}\cite[p. 82]{Zalinescu-2002}
Let $\Psi:\R^n\rightarrow\bR$ be a proper, convex function. Given $\epsilon\in\R_{\geq 0}$, the $\epsilon$-subdifferential of $\Psi$ at a point $\x\in\R^n$ is the set
\begin{equation}\label{eps-subdiff}
\partial_\epsilon \Psi(\x) = \{\v\in \R^n: \Psi(\y) \geq \Psi(\x) + (\y-\x)^T\v -\epsilon \ \ \forall \y\in \R^n\}.
\end{equation}
\end{Def}
\begin{Rem}\label{rem3}
(i) If $\x\notin \dom(\Psi)$, then $\partial_\epsilon \Psi(\x)= \emptyset$ for any $\epsilon\in \R_{\geq0}$. Conversely, if $x\in \dom(\Psi)$ and $\Psi$ is lower semicontinuous at $x$, then $\partial_\epsilon \Psi(\x)\neq \emptyset$ for any $\epsilon\in \R_{>0}$ \cite[Theorem 2.4.4]{Zalinescu-2002}.\\
(ii) For $\epsilon = 0$, thanks to Lemma \ref{fenchelsub}, the usual subdifferential set $\partial \Psi(\x)$ is recovered. In this case, it might happen that $\partial \Psi(\x)= \emptyset$ even if $\x\in \dom(\Psi)$ (see \cite[p. 215]{Rockafellar-1970} for a counterexample). However, if $x\in {\rm{int}} \ \dom(\Psi)$, then $\partial \Psi(\x)\neq \emptyset$ \cite[Theorem 2.4.12]{Zalinescu-2002}.\\
\silviacorr{(iii) Let $\Psi$ be lower semicontinuous, $\{\epsilon_k\}_{k\in \N}\subset \R_{\geq 0}$, $\epsilon\in \R_{\geq 0}$, and $\{(\xk,v^{(k)})\}_{k\in \N}$ a sequence such that $(\xk,v^{(k)})\in{\rm{graph}} \ \partial_{\epsilon_k} \Psi=\{(\x,x^*)\in \R^n\times\R^n: x^*\in \partial_{\epsilon_k} \Psi(\x)\}$. If $(\xk,v^{(k)})\rightarrow(\x,v)$ and $\epsilon_k\rightarrow\epsilon$ as $k\rightarrow +\infty$, then $(\x,v)\in {\rm{graph}} \ \partial_{\epsilon} \Psi$ \cite[Theorem 2.4.2(ix)]{Zalinescu-2002}.}\\
\end{Rem}
%We now give the definition of stationary point of a function $\Psi$ in terms of its directional derivative.
%\begin{Def}\cite[p. 394]{Tseng-Yun-2009}
%Let $\Psi$ be a function from $\R^n$ to $\bR$. A point $\x$ is \emph{stationary} for $\Psi$ if $\x\in\dom(\Psi)$ and
%\begin{equation}\label{stationary}
%\Psi'(\x;\d)\geq 0 \ \ \ \forall \d\in \R^n.
%\end{equation}
%\end{Def}
%When Assumption \ref{ass1} holds, the stationarity condition can be formulated in the following equivalent ways:
%\begin{Lemma}
%Let $f=f_0+f_1$ be a function satisfying Assumption \ref{ass1}. Assume that $\alpha \in \R_{>0}$ and $D\in \R^{n\times n}$ is a symmetric positive definite matrix. Then, for any $\x \in \dom(f)$, the following statements are equivalent:
%\begin{itemize}
%\item[(i)] $f'(\x;\d)\geq 0 \ \ \ \forall \d\in \R^n$;
%\item[(ii)] $\prox_{\alpha f_1}^D(\x-D^{-1}\alpha\nabla f_0(\x)) = x $;
%\item[(iii)] $0\in\partial f(\x)$.
%\end{itemize}
%\end{Lemma}
%\begin{proof}
%The equivalence (i)$\Leftrightarrow$(ii) is proved in \cite[Proposition 3.3]{Bonettini-Loris-Porta-Prato-2015}. The equivalence (ii)$\Leftrightarrow$(iii) follows from the fact that $\prox_{\alpha f_1}^D(\x-\alpha D^{-1}\nabla f_0(\x))$ is the minimizer of the convex function $\z \mapsto \nabla f_0(\x)^T(\z -\x) + \frac{1}{2\alpha} \|\z -\x\|^2_D + f_1(\z)$.
%\end{proof}

\subsection{Kurdyka--{\L}ojasiewicz functions}

In this section we define the Kurdyka--{\L}ojasiewicz (KL) property. We adopt the definition employed also in \cite{Attouch-etal-2010,Attouch-etal-2013,Ochs-etal-2014}, but we remark that other versions of this property are studied in the literature \cite{Attouch-Bolte-2009,Bolte-etal-2010b,Chouzenoux-etal-2014}.
\begin{Def}\label{KL}
Let $f:\R^n \longrightarrow \bR$  be a proper, lower semicontinuous function. The function $f$ is said to have the KL property at $\overline{z} \in \dom(\partial f)$ if there exist $\upsilon \in (0,+\infty]$, a neighborhood $U$ of  $\overline{z}$ and a continuous concave function $\phi:[0,\upsilon)\longrightarrow [0,+\infty)$ such that:
\begin{itemize}
\item $\phi(0) = 0$;
\item $\phi$ is $C^1$ on $(0,\upsilon)$;
\item $\phi'(s) > 0$ for all $s \in (0,\upsilon)$;
\item the KL inequality
\begin{equation*}
\phi'(f(z)-f(\overline{z})) {\rm{dist}}(0,\partial f(z)) \geq 1
\end{equation*}
holds for all $z \in U \cap [f(\overline{z}) < f < f(\overline{z}) + \upsilon]$.
\end{itemize}
If $f$ satisfies the KL property at each point of $\dom(\partial f)$, then $f$ is called a KL function.
\end{Def}
Examples of KL functions are the indicator functions of semi-algebraic sets, real polynomials, $p$-norms and, in general,  semi-algebraic functions or real analytic functions \cite{Attouch-etal-2010,Bolte-etal-2014,Xu-Yin-2013}.
As a result of this, a large variety of optimization problems frequently addressed in signal and image processing is included, as those exploiting a $p$-norm or the Kullback-Leibler divergence as fit-to-data term, and box plus equality constraints as feasible set.

\section{Algorithm and convergence analysis}\label{sec3}

\subsection{The proposed algorithm}\label{sec:algo}

The proposed approach, which is outlined in Algorithm \ref{algo:nuSGP}, is denoted with the name VMILAn (where ``n'' stands for ``new version'') and aims at finding a stationary point for problem \eqref{minprob_statement}.\\
Before presenting Algorithm \ref{algo:nuSGP}, we recall the following definitions. Given the point $\xk\in\R^n$, the parameters $\ak\in \R_{>0}$, $\gamma\in[0,1]$ and a symmetric positive definite matrix $D_k$, we define the function
\begin{equation}\label{Hk}
\thk(\x) = \nabla f_0(\xk)^T(\x-\xk) + \frac \gamma{2\alpha_k} \|\x-\xk\|^2_{D_k} + f_1(\x)-f_1(\xk).
\end{equation}
We observe that $\thk$ is strongly convex for any $\gamma\in (0,1]$. Thus, by setting $\hk = \hk_1$, we define the (unique) proximal point
\begin{equation}\label{minhk}
\yk = \prox_{\alpha_k f_1}^{D_k} (\zk) = \arg\min_{\x\in\R^n} \hk(\x),
\end{equation}
where $\zk = \xk-\alpha_kD_k^{-1}\nabla f_0(\xk)$.\\
%In \ref{step2}, an approximation $\tyk$ of the point $\yk$ is computed, so that it defines a descent direction $\dk=\tyk-\xk$ for $f$ at $\xk$ \cite[Proposition 3.2]{Bonettini-Loris-Porta-Prato-2015}. In \ref{step4}, a line--search along the direction $\dk$ is performed by means of a backtracking procedure, thus producing the point $\xk + \lamk\dk$. Finally, in \ref{step5}, the successive iterate $x^{(k+1)}$ is determined as the point, between the one computed at \ref{step4} and $\tyk$, in which one the objective function attains the lowest value.
\begin{algorithm}[H]\caption{Variable metric inexact line--search based algorithm - new version (VMILAn)}\label{algo:nuSGP}
Choose $0<\alpha_{\min}\leq\alpha_{\max}$, $\mu \geq 1$, $\delta, \beta \in(0,1)$, $\gamma \in [0,1]$, $\tau\in \R_{> 0}$, $\x^{(0)}\in\dom(f_1)$. \\
For $k=0,1,2,...$
\begin{algorithmsteps}
\item\label{step1} Choose $\ak\in[\alpha_{\min},\alpha_{\max}]$, $D_k\in {\mathcal M}_{\mu}$.
\item\label{step2} Let $\thk$, $\hk$ and $\yk$ be defined as in \eqref{Hk}-\eqref{minhk}.\\
Compute $\tyk$ such that
%\numparts
\begin{eqnarray}
%&\thk(\tyk) < 0 \label{descent}\\
&\hk(\tyk) - \hk(\yk) \leq - \frac \tau 2\thk(\tyk).\label{inexact}%\\
%&\epsilon_k \leq - \frac \tau 2\thk(\tyk)\label{epsilonk}.
\end{eqnarray}
%\endnumparts
\item\label{step3} Set $\dk = \tyk-\xk$.
\item\label{step4} Compute the smallest non-negative integer $i_k$ such that
\begin{equation}\label{Armijo}
f(\xk + \delta^{i_k}\dk) \leq f(\xk) + \beta \delta^{i_k}\thk(\tyk)
\end{equation}
and set $\lamk = \delta^{i_k}$.
%\begin{equation}\label{Armijo}
%f(\xk + \lamk\dk) \leq f(\xk) + \beta \lamk\thk(\tyk)
%\end{equation}
%with $\lamk= \delta^{i_k}$.
\item\label{step5} Compute the new point as
\begin{equation}\label{SGPmodified}
x^{(k+1)}=
\cases{\tyk&if $f(\tyk) < f(x^{(k)}+\lamk d^{(k)})$ \\
x^{(k)}+\lambda_k d^{(k)}&otherwise \\}.
\end{equation}
\end{algorithmsteps}
\end{algorithm}
\silviacorr{Algorithm \ref{algo:nuSGP} is a slightly modified version of VMILA \cite{Bonettini-Loris-Porta-Prato-2015} and belongs to the class of proximal-gradient algorithms. Its main features are described below.\\
\emph{\ref{step1} - Variable metric.}\\ In our approach, the steplength parameter $\ak$ and the scaling matrix $D_k$ should be considered as almost free parameters which can be tuned to better capture the local features of the objective function and constraints, with the aim to accelerate the progress towards the solution. Indeed, in the following convergence analysis we will make the only assumption that they are bounded as required at \ref{step1}. Concerning the practical choice of the steplength parameter $\ak$, general rules have been proposed in the literature, e.g. the Barzilai and Borwein rules \cite{Barzilai-Borwein-1988} and the more recent advancement in this field employing the Ritz values \cite{Fletcher-2012}, and their practical effectiveness is well established. Unlike the steplength selection, choosing an appropriate scaling matrix $D_k$ is strictly related to the problem features, i.e. the specific shape of the objective function to be minimized and the constraints. Some guidelines about this choice can be found in the literature on imaging inverse problems in variational form, for example the Majorize-Minimize principle \cite{Chouzenoux-etal-2014} or the Split Gradient strategy \cite{Lanteri-etal-2002}. More details on these aspects are given in Section \ref{sec_exp1}.\\
\emph{\ref{step2} - Inexact computation of the proximal point}\\
Condition \eqref{inexact} at \ref{step2} expresses an inexact computation of the proximal point which is similar to the one introduced in \cite[Definition 2.1]{Salzo-Villa-2012} and is weaker than condition \cite[Equation 31]{Bonettini-Loris-Porta-Prato-2015} used in VMILA. We observe first that, since $\hk(\xk)=0$, then $\hk(\yk)\leq 0$ and, by further recalling that $\thk(y)\leq\hk(y)$ for all $\y\in\R^n$, it follows from condition \eqref{inexact} that
\begin{equation}\label{hgamma}
\thk(\tyk)\leq 0
\end{equation}
with the equality holding if and only if $\tyk$ is stationary \cite[Proposition 2.3]{Bonettini-Loris-Porta-Prato-2015}. Thus, \ref{step2} is well-posed. Furthermore, condition \eqref{hgamma}, together with the fact that $f_1$ is convex, implies that a point $\tyk$ satisfying \eqref{inexact} belongs to $\dom(f_1)$.\\
The more appealing feature of condition \eqref{inexact} is that a point $\tyk$ satisfying it can be computed in practice, even if $\hk(\yk)$ is not known, in the quite general case when $f_1(\x) = g(A\x)$, where $g:\R^m\to\R$ is a proper, convex, lower semicontinuous function and $A\in\R^{m\times n}$. In this case, the dual problem of \eqref{minhk} is
\begin{equation}\label{duale}
\max_{\v\in \R^m} \Psik(\v)
\end{equation}
with
\begin{equation*}
\fl \Psik(\v) = -\frac{1}{2\ak}\|\ak D_k^{-1}A^T\v-\zk\|^2_{D_k} - g^*(\v)-f_1(\xk)-\frac{\ak}{2} \|\nabla f_0(\xk)\|^2_{D_k^{-1}} + \frac{1}{2\ak}\|\zk\|^2_{D_k}.
\end{equation*}
Condition \eqref{inexact} is fulfilled by any point $\tyk = \zk-\ak D_k^{-1} A^T v$ with $\v\in\R^m$ satisfying
\begin{equation}\label{stopcond}
\hk(\tyk)\leq \eta \Psik(\v)
\end{equation}
where $\eta=1/(1+\tau/2)$. Indeed, if inequality \eqref{stopcond} holds we have
\begin{equation}\label{rev1}
\fl\hk(\tyk)-\hk(\yk)\leq \hk(\tyk)-\Psik(v)\leq -\frac \tau 2 \hk(\tyk) \leq -\frac \tau 2 \thk(\tyk),
\end{equation}
where the leftmost inequality follows from the fact that the dual function satisfies $\Psik(\v)\leq \hk(\y)$ for any $\y\in\R^n$, $\v\in\R^m$, while the last inequality is a consequence of $0\leq \gamma\leq 1$.\\
A point satisfying \eqref{stopcond} can be computed by applying an iterative method to \eqref{duale}, using \eqref{stopcond} as stopping criterion. More in detail, let $\{\v^{(k,\ell)}\}_{\ell\in\N}\subset \dom(\Psi^{(k)})$ be a sequence converging to the solution of the dual problem \eqref{duale} as $\ell\to \infty$, which can be generated by applying for example a forward--backward method to \eqref{duale}. If $\dom(f_1)$ is closed, we can define $\tilde y^{(k,\ell)}= P_{\dom(f_1)}(\zk-\ak D_k^{-1} A^T \v^{(k,\ell)})$, where $P_{\dom(f_1)}(\cdot)$ denotes the Euclidean projection onto $\dom(f_1)$. Then, the requested point is $\tyk = \tilde y^{(k,\ell)}$ where $\ell$ is the smallest integer such that $\hk(\ty^{(k,\ell)})\leq \eta \Psik(\v^{(k,\ell)}) $. Notice that the projection onto $\dom(f_1)$ avoids infinite values on the left hand side of \eqref{stopcond}.\\
We point out that condition \eqref{inexact} is equivalent to $0\in\partial_{\epsilon_k} \hk(\tyk)$, where $\epsilon_k = -\frac \tau 2 \thk(\tyk)$, which is a relaxed version of the inclusion characterizing the exact proximal point, i.e. $0\in\partial \hk(\yk)$. In the next section we will prove that the $\epsilon_k$ defined above are summable and then, in particular, $\lim_{k\to\infty}\epsilon_k = \lim_{k\to\infty}\thk(\tyk) = 0$ (see Lemma \ref{keylemma} to follow). Thus, the approximate computation of the proximal point through inequality \eqref{inexact} becomes automatically more accurate as the iterations proceed.\\
\emph{\ref{step4} - Armijo-like backtracking loop}\\
The steplength (or overrelaxation) parameter $\lamk$ is adaptively computed by means of a backtracking loop at \ref{step4}, which terminates when the Armijo-like condition \eqref{Armijo} is satisfied. The aim of \eqref{Armijo} is to accept only the steplength which produces a sufficient decrease of the objective function and this is crucial for the convergence of the whole method. Setting $\gamma=0$ allows to recover the standard Armijo condition and, indeed, $\gamma$ can be considered as an on/off parameter to include or not the quadratic term $\|\tyk-\xk\|^2_{D_k}$ on the right-hand-side of \eqref{Armijo}; in general, taking $\gamma = 1$ may produce larger steplengths (see Figure \ref{fig:fig0}).\\
Since $\tyk$ satisfying \eqref{inexact} necessarily belongs to the domain of $f_1$, $f_1$ is convex and $\xk\in\dom(f_1)$, any point on the line $\xk+\lambda(\tyk-\xk)$, $\lambda\in[0,1]$ belongs to $\dom(f_1)$. Then, by Assumption \ref{ass1} (ii), $ f(\xk+\lambda \dk) < +\infty$ for all $\lambda\in[0,1]$ and, as a consequence, the two sides of \eqref{Armijo} only involve finite quantities.\\
Inequality \eqref{hgamma} implies that the linesearch procedure at \ref{step4} terminates in a finite number of steps, i.e., for all $k\in \N$ there exists $i_k<\infty$ such that \eqref{Armijo} holds \cite[Proposition 3.1]{Bonettini-Loris-Porta-Prato-2015}.\\
\emph{\ref{step5} - Overrelaxation}\\
We observe that \eqref{Armijo} does not necessarily imply that $f(\xk+\lamk\dk) \leq f(\tyk)$ (see Figure \ref{fig:fig0}). Then, we force this inequality to hold by an extra step, \ref{step5}, which guarantees that $f(\xkk) \leq f(\tyk)$ and $f(\xkk) \leq f(\xk+\lamk\dk)$, where $\lamk$ is computed via the backtracking loop at \ref{step4}. \ref{step5} is the main difference between VMILA \cite{Bonettini-Loris-Porta-Prato-2015} and Algorithm \ref{algo:nuSGP} and it is crucial for proving the convergence of the sequence $\{\xk\}_{k\in \N}$ in Theorem \ref{thm:abstract_conv}. It could also allow, in general, to take a point corresponding to a smaller value of the objective function instead of simply setting $\xkk = \xk+\lamk\dk$.}

\begin{figure}
\begin{center}
  \includegraphics[scale = 0.6]{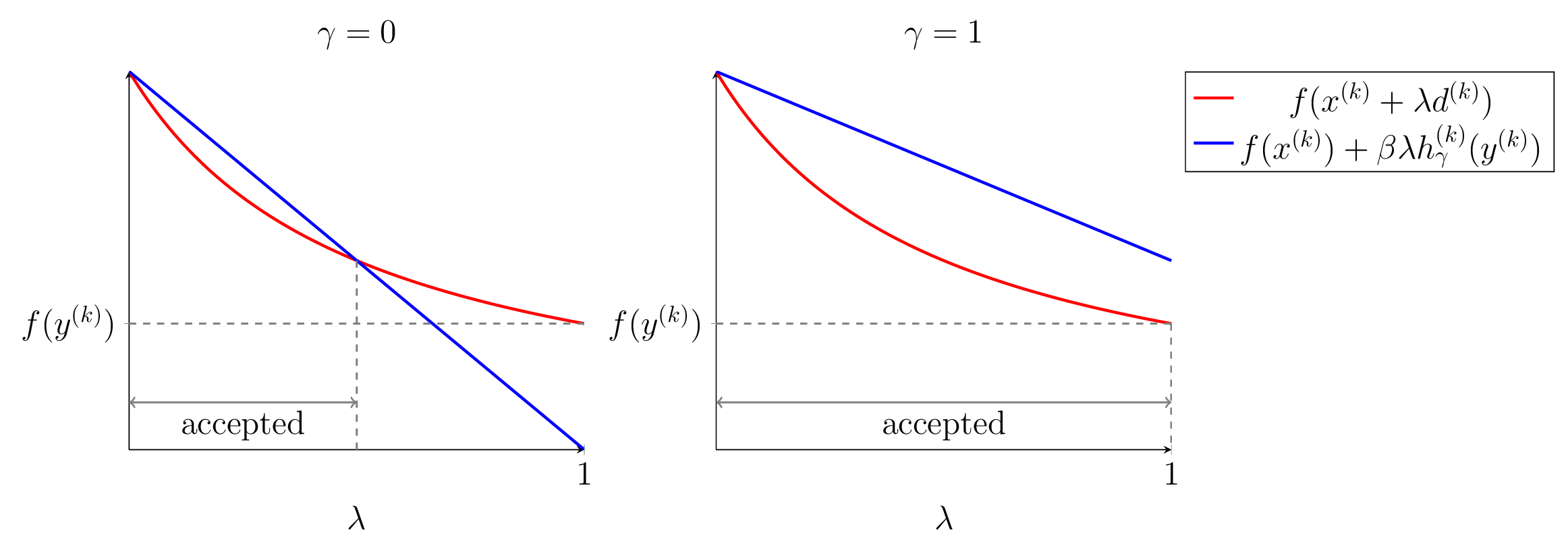}\\
  \caption{Linesearch example: $f_0(x) = \frac 2{x+1}$, $f_1(x) = \iota_{[0,10]}(x)$, $\xk= 0$, $\beta=\frac 1 2$, $\alpha_k = 1$, $D_k=1$. In general, the points satisfying the Armijo condition could not improve the function value at $\yk$.}\label{fig:fig0}
\end{center}
\end{figure}

\subsection{Convergence analysis}
We collect in the following lemma some properties of Algorithm \ref{algo:nuSGP}, which will be fundamental for the subsequent analysis. Here and in the following we denote by $\{\xk\}_\kinN$, $\{\tyk\}_{k\in \N}$ and $\{\lambda_k\}_{k\in \N}$ the sequences generated by Algorithm \ref{algo:nuSGP}.
\begin{Lemma}\label{keylemma}
For all $k\geq 0$, the following relations hold
\begin{eqnarray}
& & \lambda_k\geq\lambda_{\min}>0\label{lambdastar}\\
& &  0\leq-\sum_{k=0}^{\infty} \thk(\tyk) <\infty\label{new_work5}\\
& & f(\xkk) + a\|\xkk-\xk\|^2 \leq f(\xk)\label{HH1}\\
& & f(\xkk)\leq f(\tyk) \leq f(\xk) +\eta_k, \ \ \lim_k\eta_k = 0\label{HH2}
\end{eqnarray}
and there exist $\bar{\epsilon}_k,\hat{\epsilon}_k\in \R_{\geq 0}$, $0\leq \bar{\epsilon}_k+\hat{\epsilon}_k\leq -\frac\tau 2 \thk(\tyk)$, $v^{(k)}\in \{\nabla f_0(\tyk)\}+\partial_{\bar{\epsilon}_k} f_1(\tyk)$ such that
\begin{equation}\label{HH3bis}
\|v^{(k)}\|\leq b\|\xkk-\xk\|+ \zeta_{k+1}, \ \ \lim_{k\rightarrow \infty}\zeta_k = 0
\end{equation}
%\vk\in \partial f(\tyk) : \|\vk\|\leq b \|\xkk-\xk\| + \zeta_{k+1}, \ \ \sum_{k=1}^{\infty}\zeta_k < \infty\label{HH3}
for some $\lambda_{\min},a,b\in \R_{>0}$, $\eta_k,\zeta_k\in \R_{\geq 0}$, $\zeta_k=\mathcal{O}(\sqrt{\hat{\epsilon}_{k-1}})$.
\end{Lemma}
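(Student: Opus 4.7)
The crux of the whole lemma is the single quantitative bound
\[
-\thk(\tyk)\ \geq\ C\,\|\tyk-\xk\|^{2}\qquad\forall k,
\]
where $C>0$ depends only on $\tau,\alpha_{\max},\mu,\gamma$. For $\gamma\in[0,1)$, substituting the identity $\hk(\x)=\thk(\x)+\tfrac{1-\gamma}{2\ak}\|\x-\xk\|_{D_k}^{2}$ into \eqref{inexact} and using $\hk(\yk)\leq \hk(\xk)=0$ yields $(1+\tau/2)\thk(\tyk)\leq -\tfrac{1-\gamma}{2\ak}\|\tyk-\xk\|_{D_k}^{2}$, and \eqref{ine_norm} closes the estimate. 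The borderline case $\gamma=1$ requires a separate argument exploiting strong convexity of $\hk$ with modulus $1/(\ak\mu)$: from $\hk(\xk)=0$ and $0\in \partial\hk(\yk)$ one gets $\|\xk-\yk\|^{2}\leq 2\ak\mu(-\hk(\yk))$, while \eqref{inexact} combined with strong convexity gives $\|\tyk-\yk\|^{2}\leq \ak\mu\tau(-\hk(\tyk))$. The triangle inequality and $\hk(\tyk)\leq \eta\,\hk(\yk)\leq 0$ (with $\eta=1/(1+\tau/2)$) then produce the same form of bound.

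Once the key estimate is in hand, \eqref{lambdastar}--\eqref{HH2} follow quickly. For \eqref{lambdastar}, the descent lemma for $f_0$, convexity of $f_1$ along the segment $[\xk,\tyk]\subset\dom(f_1)$, and the definition \eqref{Hk} give
\[
f(\xk+\lambda\dk)-f(\xk)\ \leq\ \lambda\thk(\tyk)+\tfrac{L\lambda^{2}}{2}\|\dk\|^{2}\ \leq\ \lambda\thk(\tyk)\Bigl(1-\tfrac{L\lambda}{2C}\Bigr),
\]
where the second inequality uses the key estimate applied to $\dk=\tyk-\xk$. Consequently \eqref{Armijo} is automatically satisfied for every $\lambda\leq 2C(1-\beta)/L$, and the backtracking structure of \ref{step4} produces an explicit positive lower bound $\lambda_{\min}$ on $\lamk$. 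For \eqref{HH1}, \ref{step5} guarantees both $f(\xkk)\leq f(\xk+\lamk\dk)$ and $\|\xkk-\xk\|\leq \|\tyk-\xk\|$ (the latter because either $\xkk=\tyk$ or $\|\xkk-\xk\|=\lamk\|\dk\|$ with $\lamk\leq 1$). Combining Armijo with the key estimate yields $f(\xkk)-f(\xk)\leq \beta\lamk\thk(\tyk)\leq -\beta\lambda_{\min}C\|\xkk-\xk\|^{2}$, i.e.\ \eqref{HH1} with $a=\beta\lambda_{\min}C$. Telescoping this inequality and using Assumption~\ref{ass1}(iv) gives $\sum_k\|\xkk-\xk\|^{2}<\infty$; telescoping the Armijo inequality with $\lamk\geq\lambda_{\min}$ gives \eqref{new_work5} and in particular $\|\tyk-\xk\|\to 0$. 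The first inequality of \eqref{HH2} is then immediate from \ref{step5}; the second follows from the descent lemma $f(\tyk)-f(\xk)\leq \thk(\tyk)+\tfrac{L\mu}{2}\|\tyk-\xk\|^{2}\leq \tfrac{L\mu}{2}\|\tyk-\xk\|^{2}$, so that $\eta_k:=\max\{0,f(\tyk)-f(\xk)\}\to 0$.

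The delicate step, and the main obstacle, is \eqref{HH3bis}. The plan is to observe that \eqref{inexact} together with $\hk(\yk)\leq 0$ forces $\tyk$ to be an $\epsilon_k$-minimizer of the convex function $\hk$ with $\epsilon_k=-\tfrac{\tau}{2}\thk(\tyk)$, hence $0\in \partial_{\epsilon_k}\hk(\tyk)$. Decomposing $\hk=\ell+q+f_1+\mathrm{const}$ with $\ell$ affine and $q(\x)=\tfrac{1}{2\ak}\|\x-\xk\|_{D_k}^{2}$ convex quadratic, the $\epsilon$-subdifferential sum rule (applicable since $\dom(q+\ell)=\R^{n}$) together with the explicit formula $\partial_{\hat\epsilon_k}q(\tyk)=\{\tfrac{1}{\ak}D_k(\tyk-\xk)+w:\tfrac{\ak}{2}w^{T}D_k^{-1}w\leq\hat\epsilon_k\}$, obtained via the Legendre conjugate, produces a splitting $\hat\epsilon_k+\bar\epsilon_k=\epsilon_k$ with $\|w\|\leq\sqrt{2\mu\hat\epsilon_k/\alpha_{\min}}$ (by \eqref{ine_norm}) and $\bar v\in \partial_{\bar\epsilon_k}f_1(\tyk)$ satisfying
\[
0=\nabla f_0(\xk)+\tfrac{1}{\ak}D_k(\tyk-\xk)+w+\bar v.
\]
Setting $v^{(k)}:=\nabla f_0(\tyk)+\bar v$ places $v^{(k)}$ in $\{\nabla f_0(\tyk)\}+\partial_{\bar\epsilon_k}f_1(\tyk)$, while Assumption~\ref{ass1}(iii) and $\|D_k\|\leq\mu$ yield $\|v^{(k)}\|\leq (L+\mu/\alpha_{\min})\|\tyk-\xk\|+\sqrt{2\mu\hat\epsilon_k/\alpha_{\min}}$. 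The conversion $\|\tyk-\xk\|\leq \|\xkk-\xk\|/\lambda_{\min}$ from the two branches of \ref{step5} completes the estimate with $b=(L+\mu/\alpha_{\min})/\lambda_{\min}$ and $\zeta_{k+1}=\sqrt{2\mu\hat\epsilon_k/\alpha_{\min}}$. Finally $\zeta_k\to 0$ follows because $\hat\epsilon_k\leq \epsilon_k\to 0$ by \eqref{new_work5}. The principal technical difficulty is precisely the $\epsilon$-subdifferential sum rule and the concrete computation of $\partial_{\hat\epsilon_k}q(\tyk)$; once these are in place the remainder is bookkeeping.
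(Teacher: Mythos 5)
Your proposal is correct and follows essentially the same route as the paper's Appendix A: the quantitative bound $-\thk(\tyk)\geq C\|\tyk-\xk\|^2$ obtained from strong convexity of $\hk$ and \eqref{inexact}, the descent lemma plus backtracking for \eqref{lambdastar}--\eqref{HH2}, and the $\epsilon$-subdifferential sum rule with the explicit $\epsilon$-subdifferential of the quadratic term for \eqref{HH3bis}. The only cosmetic difference is your case split on $\gamma$: the paper runs your ``$\gamma=1$'' argument uniformly for all $\gamma\in[0,1]$ (since $\thk\leq\hk$ the bound in terms of $\hk$ suffices), so the separate $\gamma\in[0,1)$ shortcut, while valid, is not needed.
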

\begin{proof}
See \ref{appendixA}.
\end{proof}

Properties similar to \eqref{HH1}--\eqref{HH3bis} hold for several proximal gradient methods developed for nonconvex, nonsmooth problems (see e.g. \cite{Attouch-etal-2013,Chouzenoux-etal-2014,Ochs-etal-2014}).\\
Based on these properties, we state the following proposition, which claims a continuity property of the objective function $f$ with respect to the sequence $\{\xk\}_\kinN$ and its limit points (if $f_1$ is continuous in its domain, the conclusion is straightforward).
\begin{Prop}\label{real_lemma5}
Suppose that the sequence $\{\xk\}_\kinN$ admits a limit point $\bar\x$. Then,
\begin{equation}\label{H3}
\lim_{k\to\infty} f(\xk) = f(\bar\x).
\end{equation}
Moreover, $\bar \x$ is stationary.
\end{Prop}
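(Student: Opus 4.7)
The plan proceeds in three stages. First, I would extract preliminary consequences from Lemma \ref{keylemma}. Inequality \eqref{HH1} together with Assumption \ref{ass1}(iv) shows that $\{f(\xk)\}_{\kinN}$ is non-increasing and bounded below, hence converges to some $f^*\in\R$; summing \eqref{HH1} yields $\sum_k\|\xkk-\xk\|^2<\infty$, so $\xkk-\xk\to 0$. Both branches of \eqref{SGPmodified} together with \eqref{lambdastar} give $\|\tyk-\xk\|\leq\|\xkk-\xk\|/\lambda_{\min}$, so also $\tyk-\xk\to 0$; in particular, for any subsequence $\x^{(k_j)}\to\bar\x$, one has $\tilde\y^{(k_j)}\to\bar\x$. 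Moreover, the bound $f_1(\x^{(k_j)})=f(\x^{(k_j)})-f_0(\x^{(k_j)})$ stays uniformly bounded along the subsequence, so l.s.c.\ of $f_1$ gives $\bar\x\in\dom(f_1)$.

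The technical core is to show that $f_1(\tilde\y^{(k_j)})\to f_1(\bar\x)$. Since $\yk$ minimizes the strongly convex $\hk$, $\hk(\yk)\leq\hk(\bar\x)$, and combining with the inexactness condition \eqref{inexact} gives $\hk(\tyk)\leq\hk(\bar\x)-\frac{\tau}{2}\thk(\tyk)$. Unfolding \eqref{Hk} and cancelling the $f_1(\xk)$ terms produces
\begin{eqnarray*}
f_1(\tyk) &\leq& f_1(\bar\x) + \nabla f_0(\xk)^T(\bar\x-\tyk) \\
&& +\,\frac{1}{2\ak}\bigl(\|\bar\x-\xk\|^2_{D_k}-\|\tyk-\xk\|^2_{D_k}\bigr) - \frac{\tau}{2}\thk(\tyk).
\end{eqnarray*}
Along the subsequence, each right-hand error vanishes: the inner product by continuity of $\nabla f_0$ and $\tilde\y^{(k_j)},\x^{(k_j)}\to\bar\x$; the quadratic part by $D_k\in\mathcal{M}_\mu$, $\ak\geq\alpha_{\min}$ and \eqref{ine_norm}; the last term by the summability \eqref{new_work5}. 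Hence $\limsup_j f_1(\tilde\y^{(k_j)})\leq f_1(\bar\x)$, which combined with l.s.c.\ of $f_1$ forces $f_1(\tilde\y^{(k_j)})\to f_1(\bar\x)$; continuity of $f_0$ on $\Omega_0\supset\dom(f_1)$ then yields $f(\tilde\y^{(k_j)})\to f(\bar\x)$.

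To conclude \eqref{H3}, I would use \eqref{HH2} to get $f^*=\lim_j f(\x^{(k_j+1)})\leq\lim_j f(\tilde\y^{(k_j)})=f(\bar\x)$, while l.s.c.\ of $f$ gives $f(\bar\x)\leq\liminf_j f(\x^{(k_j)})=f^*$, so equality holds. For stationarity, Lemma \ref{keylemma} provides $\vk-\nabla f_0(\tyk)\in\partial_{\bar{\epsilon}_k}f_1(\tyk)$ with $\bar{\epsilon}_k\to 0$ (by \eqref{new_work5}) and $\|\vk\|\to 0$ (by \eqref{HH3bis}, together with $\xkk-\xk\to 0$ and $\zeta_k\to 0$); along the subsequence $\tilde\y^{(k_j)}\to\bar\x$ and $\v^{(k_j)}-\nabla f_0(\tilde\y^{(k_j)})\to-\nabla f_0(\bar\x)$ by continuity, so the closedness property in Remark \ref{rem3}(iii) yields $-\nabla f_0(\bar\x)\in\partial f_1(\bar\x)$, and the sum rule of Lemma \ref{sumrule} gives $0\in\partial f(\bar\x)$. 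The main obstacle is the middle stage: deriving the displayed upper bound on $f_1(\tyk)$ from the inexact subproblem condition and the optimality of $\yk$, and verifying that all error terms vanish along the subsequence, which crucially depends on the uniform boundedness of $\ak$ and the eigenvalues of $D_k$.
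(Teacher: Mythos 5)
Your proposal is correct, and its overall skeleton matches the paper's: the same preliminary consequences of Lemma \ref{keylemma} (monotonicity and summability from \eqref{HH1}, $\|\xkk-\xk\|\to0$, $\|\tyk-\xk\|\to0$ via \ref{step5} and \eqref{lambdastar}), the same two-sided argument for \eqref{H3} (lower semicontinuity of $f$ for one inequality, $f(\xkk)\leq f(\tyk)$ from \eqref{HH2} for the other), and an identical stationarity argument via \eqref{HH3bis}, the closedness of the graph of the $\epsilon$-subdifferential in Remark \ref{rem3}(iii), and the sum rule. The one place where you genuinely diverge is the mechanism for the key inequality $f(\bar\x)\geq\lim_k f(\xk)$: the paper evaluates the $\epsilon$-subgradient inequality for $\w^{(k_j)}\in\partial_{\bar\epsilon_{k_j}}f_1(\ty^{(k_j)})$ at the test point $\bar\x$, using \eqref{HH3bis} and the continuity of $\nabla f_0$ to identify $\lim_j \w^{(k_j)}=-\nabla f_0(\bar\x)$; you instead compare $\ty^{(k_j)}$ against the competitor $\bar\x$ in the proximal subproblem, combining the optimality of $\yk$ with \eqref{inexact} to get $\hk(\tyk)\leq\hk(\bar\x)-\frac{\tau}{2}\thk(\tyk)$ and then letting the quadratic and linear error terms vanish via the bounds $\ak\in[\alpha_{\min},\alpha_{\max}]$, $D_k\in{\mathcal M}_\mu$ and \eqref{new_work5}. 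Your route has the minor advantages of not needing $\w^{(k_j)}\to-\nabla f_0(\bar\x)$ (hence not \eqref{HH3bis}) for the function-value part and of delivering the stronger conclusion $f_1(\ty^{(k_j)})\to f_1(\bar\x)$; the paper's route reuses the $\epsilon$-subgradient objects it must construct anyway for the stationarity claim, so nothing extra is computed. Both arguments share the same implicit reliance on the continuity of $f_0$ and $\nabla f_0$ at $\bar\x$ (i.e., on $\bar\x$ lying where Assumption \ref{ass1}(ii)--(iii) give control), so this is not a gap specific to your version.
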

\begin{proof}
See \ref{appendixB}.
\end{proof}

\begin{Rem}\label{rem6}

Thanks to \ref{step5} of Algorithm \ref{algo:nuSGP}, condition
\begin{equation*}
f(x^{(k+1)})\leq f(x^{(k)}+\lambda_k d^{(k)})
\end{equation*}
is satisfied for all $k\in\N$. This inequality, together with \eqref{inexact}, allows to prove the stationarity of any limit point of the sequence generated by VMILAn also by means of Theorem 3.1 in \cite{Bonettini-Loris-Porta-Prato-2015}, under the assumption that ${\rm{dom}}(f_1)$ is a closed set. \silviacorr{Proposition \ref{real_lemma5} is an alternative to Theorem 3.1 in \cite{Bonettini-Loris-Porta-Prato-2015} since it does not require the closedness of ${\rm{dom}}(f_1)$ but, unlike Theorem 3.1 in \cite{Bonettini-Loris-Porta-Prato-2015}, exploits Lipschitz continuity of $\nabla f_0$.}
\end{Rem}
We have now set the basis for our main convergence result, which will be stated in the following. The proof is similar but not identical to Lemma 2.6 in \cite{Attouch-etal-2013} (see also \cite{Frankel-etal-2015}), since \silviacorr{here we have to take into account of the overrelaxation at \ref{step5}}.
\begin{Thm}\label{thm:abstract_conv}
Suppose that $f$ is a KL function and assume that the sequence $\{\xk\}_\kinN$ generated by Algorithm \ref{algo:nuSGP} satisfies the following condition
\begin{equation}\label{HH3}
\exists \ \vk\in \partial f(\tyk) : \|\vk\|\leq b \|\xkk-\xk\| + \zeta_{k+1}, \ \ \sum_{k=1}^{\infty}\zeta_k < \infty,
\end{equation}
for some $b>0$, $\zeta_k\in \R_{\geq 0}$, and admits a limit point $\bar \x$. Then,
\begin{equation}
\sum\limits_{k=0}^{+ \infty}\|x^{(k+1)}-x^{(k)}\| < +\infty
\end{equation}
and, therefore, the whole sequence converges to $\bar\x$, which is stationary for problem \eqref{minprob_statement}.
\end{Thm}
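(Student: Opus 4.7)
The plan is to adapt the classical Kurdyka--{\L}ojasiewicz convergence scheme of \cite{Attouch-etal-2013} to the present setting, splitting the analysis into two cases according to whether the value $f(\barx)$ is ever attained by the sequence $\{f(\xk)\}_\kinN$. In the trivial case, when $f(\x^{(k_0)}) = f(\barx)$ for some $k_0$, monotonicity of $\{f(\xk)\}_\kinN$ from \eqref{HH1} combined with Proposition \ref{real_lemma5} forces $f(\xk) = f(\barx)$ for all $k\geq k_0$; a second application of \eqref{HH1} then yields $\xkk = \xk$ for all such $k$, so the sequence is eventually constant and the claim is immediate.

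In the main case, $f(\xk) > f(\barx)$ for all $k$. Telescoping \eqref{HH1} with the lower boundedness of $f$ yields $\sum_k \|\xkk - \xk\|^2 < \infty$, hence $\|\xkk - \xk\| \to 0$. Since \ref{step5} enforces $\xkk \in \{\tyk, \xk + \lambda_k \dk\}$ with $\lambda_k \geq \lambda_{\min} > 0$ by \eqref{lambdastar}, it follows that $\|\tyk - \xk\| \leq \|\xkk - \xk\|/\lambda_{\min} \to 0$; together with \eqref{HH2} and Proposition \ref{real_lemma5} this gives $\tyk\to\barx$ along any subsequence of $\{\xk\}_\kinN$ tending to $\barx$, $f(\tyk) \to f(\barx)$, and $f(\tyk) \geq f(\xkk) > f(\barx)$. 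I would next invoke the KL property at $\barx$ to obtain a concave desingularising function $\phi$, a constant $\upsilon > 0$ and a neighbourhood $U$ of $\barx$, pick $\rho > 0$ with $B(\barx,\rho) \subset U$, and select $k_0$ so that $\x^{(k_0)} \in B(\barx,\rho/2)$ (using that $\barx$ is a limit point), $\tyk \in B(\barx,\rho)$ and $f(\tyk) - f(\barx) \in (0,\upsilon)$ for $k\geq k_0$, and both $\phi(f(\x^{(k_0)}) - f(\barx))$ and $\sum_{k \geq k_0}\zeta_{k+1}$ are small. An induction on $k \geq k_0$ would then show that $\xk$ never leaves $B(\barx,\rho)$, so that the KL inequality is applicable at $\tyk$ for every $k\geq k_0$.

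The heart of the argument is the recursion. The KL inequality at $\tyk$ combined with \eqref{HH3} yields
\begin{equation*}
\phi'(f(\tyk) - f(\barx)) \geq \frac{1}{\|\vk\|} \geq \frac{1}{b\|\xkk - \xk\| + \zeta_{k+1}}.
\end{equation*}
Concavity of $\phi$, the inequalities $f(\xkk) \leq f(\tyk) \leq f(\xk) + \eta_k$ from \eqref{HH2} with $\eta_k \to 0$, and the sufficient decrease $f(\xk) - f(\xkk) \geq a\|\xkk - \xk\|^2$ from \eqref{HH1} together yield, after absorbing the errors $\eta_k$ into an additional summable term $r_k$,
\begin{equation*}
\phi(f(\xk) - f(\barx)) - \phi(f(\xkk) - f(\barx)) \geq \frac{a\|\xkk - \xk\|^2}{b\|\xkk - \xk\| + \zeta_{k+1}} - r_k.
\end{equation*}
Applying the elementary inequality $2\sqrt{AB} \leq A + B$ transforms this into
\begin{equation*}
\|\xkk - \xk\| \leq \tfrac{1}{2}\|\xk - \xkm\| + C_1\bigl[\phi(f(\xk) - f(\barx)) - \phi(f(\xkk) - f(\barx))\bigr] + C_2 \zeta_{k+1} + C_3 r_k,
\end{equation*}
whose right-hand side telescopes and is summable. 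Hence $\sum_k \|\xkk - \xk\| < \infty$, the sequence $\{\xk\}_\kinN$ is Cauchy and converges to $\barx$, and by Proposition \ref{real_lemma5} this limit is stationary for \eqref{minprob_statement}.

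The main obstacle is the mismatch between $\tyk$, where the KL inequality applies, and $\xk,\xkk$, where the sufficient decrease is measured: $f(\tyk)$ may exceed $f(\xk)$ by the error $\eta_k$ from \eqref{HH2}, so $\phi'$ cannot be evaluated directly at $f(\xk) - f(\barx)$ via its monotonicity. This forces a careful tracking of $\eta_k$ (or, equivalently, of $\thk(\tyk)$, which is summable by \eqref{new_work5}) and its absorption into the summable right-hand side of the recursion, so that the telescoping structure needed to sum $\sum_k \|\xkk-\xk\|$ is preserved.
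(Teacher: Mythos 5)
Your overall scaffolding (localization in a ball, induction, AM--GM, telescoping) matches the paper's, but the central recursion has a genuine gap, and it sits exactly at the point the theorem's design is meant to address. You apply the KL inequality at $\tyk$, obtaining a lower bound on $\phi'(f(\tyk)-f(\barx))$ in terms of $b\|\xkk-\xk\|+\zeta_{k+1}$, and then pair it with the decrease $f(\xk)-f(\xkk)\geq a\|\xkk-\xk\|^2$ over the \emph{same} index $k$. Transferring the KL bound to $\phi'(f(\xk)-f(\barx))$ via monotonicity of $\phi'$ requires $f(\xk)\leq f(\tyk)$, and \eqref{HH2} gives you neither this nor anything close: it gives $f(\xkk)\leq f(\tyk)\leq f(\xk)+\eta_k$. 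You have also misidentified which case is problematic: if $f(\tyk)>f(\xk)$ the monotonicity works in your favour (smaller argument, larger $\phi'$); the failure occurs when $f(\tyk)<f(\xk)$, e.g.\ when \ref{step5} selects $\xkk=\tyk$ and $f(\tyk)$ is already very close to $f(\barx)$. In that case $\phi'(f(\xk)-f(\barx))$ can be much smaller than $\phi'(f(\tyk)-f(\barx))$, and the discrepancy cannot be absorbed into a summable \emph{additive} term $r_k$: the error enters multiplicatively through $\phi'$, which is unbounded near $0$, and no bound of the form $f(\xk)-f(\tyk)\leq(\mbox{summable})$ controls the ratio $\phi'(f(\xk)-f(\barx))/\phi'(f(\tyk)-f(\barx))$. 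So the displayed inequality
\begin{equation*}
\phi(f(\xk)-f(\barx))-\phi(f(\xkk)-f(\barx))\geq \frac{a\|\xkk-\xk\|^2}{b\|\xkk-\xk\|+\zeta_{k+1}}-r_k
\end{equation*}
is not established. (A symptom of the confusion: your final recursion carries $\frac12\|\xk-\xkm\|$ on the right, which cannot come from a denominator containing $\|\xkk-\xk\|$.)

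The correct repair --- and the reason \ref{step5} exists --- is a one-step index shift. The overrelaxation guarantees $f(\x^{(k)})\leq f(\tykm)$, which is precisely the inequality needed to transfer the KL bound at $\tykm$ (where \eqref{HH3} gives $\|v^{(k-1)}\|\leq b\|\x^{(k)}-\x^{(k-1)}\|+\zeta_k$) to $\phi'(f(\x^{(k)})-f(\barx))$ with \emph{no} error term. Combining with concavity and \eqref{HH1} then yields the lagged recursion
\begin{equation*}
2\|\x^{(k+1)}-\x^{(k)}\|\leq\|\x^{(k)}-\x^{(k-1)}\|+\frac{b}{a}\bigl[\phi(f(\xk)-f(\barx))-\phi(f(\xkk)-f(\barx))\bigr]+\frac{1}{b}\zeta_k,
\end{equation*}
which telescopes cleanly; this is exactly the paper's \eqref{fundeq}. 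Note also that the induction keeping the iterates in $B(\barx,\rho)$ must then control both $\xk$ and $\tykm$ (the latter via \eqref{H1tris}), which is why the paper's choice of $k_0$ involves the term $3\sqrt{(f(\x^{(0)})-f(\barx))/(a\lambda_{\min}^2)}$. Your Step~5-based bound $\|\tyk-\xk\|\leq\|\xkk-\xk\|/\lambda_{\min}$ is the right tool for this, but the membership $\tykm\in B(\barx,\rho)\cap[f(\barx)<f<f(\barx)+\upsilon]$ must be part of the induction hypothesis, not just a consequence of convergence.
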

\begin{proof}
The stationarity of the limit points of $\{\xk\}_{k\in \N}$ is ensured by Proposition \ref{real_lemma5}. It remains to show that the sequence has finite length and, thus, converges.
Let $\upsilon$, $\phi$ and $U$ be as in Definition \ref{KL}. These objects exist since the KL inequality holds, in particular, at $\bar{x}$. From Proposition \ref{real_lemma5} we have $\lim_{k\to\infty} f(\xk) = f(\bar\x)$ and, from \eqref{HH2}, it also follows that $\lim_{k\to\infty}f(\tyk)=f(\bar{x})$. Consequently, the following inequality
\begin{equation}\label{condKL}
f(\bar{x})\leq f(\xk)\leq f(\tykm)< f(\bar{x})+\upsilon
\end{equation}
holds for all sufficiently large $k$. Furthermore, let $\rho > 0$ be such that $B(\bar{x},\rho)\subset U$. Then, using the continuity of $\phi$, the fact that $\bar \x$ is a limit point of $\{\xk\}_\kinN$ and $\sum_{k}\zeta_k<\infty$, one can choose $k_0 \in \N$ sufficiently large such that \eqref{condKL} holds for all $k> k_0$ and the following inequalities are satisfied:
\begin{eqnarray*}
\|\bar{x}-x^{(k_0)}\| \leq \frac\rho 4 \ & \ ; \ & \ 3\sqrt{\frac{f(x^{(k_0)})-f(\bar{x})}{a\lambda_{\min}^2}}\leq \frac\rho 4\\
\frac{b}{a}\phi(f(x^{(k_0)})-f(\bar{x}))\leq \frac\rho 4 \ & \ ; \ & \ \frac 1 b\sum_{i=k_0+1}^\infty \zeta_i\leq \frac \rho 4,
\end{eqnarray*}
$a,b$ being the positive constants in inequalities \eqref{HH1} and \eqref{HH3}.
With a little abuse of notation, we will now use $\{x^{(k)}\}_{k\in \N}$ to denote the sequence $\{x^{(k+k_0)}\}_{k\in \N}$ (and $\{\zeta_k\}_{k\in \N}$ instead of $\{\zeta_{k+k_0}\}_{k\in \N}$), so that \eqref{condKL} and the following inequality hold
\begin{equation}\label{technicalcond}
\|\bar{x}-x^{(0)}\| + 3\sqrt{\frac{f(x^{(0)})-f(\bar{x})}{a\lambda_{\min}^2}}
                        +\frac{b}{a}\phi(f(x^{(0)})-f(\bar{x})) +\frac 1 b\sum_{i=1}^\infty \zeta_i \leq \rho,
\end{equation}
for all $k\geq 1$.
Before we proceed with the core of the proof, let us rewrite \eqref{HH1} as
\begin{equation}\label{H1bis}
\|x^{(k+1)}-x^{(k)}\|\leq \sqrt{\frac{f(x^{(k)})-f(x^{(k+1)})}{a}},
\end{equation}
which, by using \ref{step5} of Algorithm \ref{algo:nuSGP} and \eqref{lambdastar}, writes also as
\begin{equation}\label{H1tris}
\|\tyk-x^{(k)}\|\leq \sqrt{\frac{f(x^{(k)})-f(x^{(k+1)})}{a\lambda_{\min}^2}}.
\end{equation}
Fix $k\geq 1$. We show that if $x^{(k)}, \tykm\in B(\bar{x},\rho)$, then
\begin{equation}\label{fundeq}
2\|x^{(k+1)}-x^{(k)}\|\leq \|x^{(k)}-x^{(k-1)}\|+\phi_k + \frac 1 b \zeta_{k},
\end{equation}
where $\phi_k=\frac{b}{a}[\phi(f(x^{(k)})-f(\bar{x}))-\phi(f(x^{(k+1)})-f(\bar{x}))]$. First we observe that, because of \eqref{condKL}, the quantity $\phi(f(x^{(k)})-f(\bar{x}))$ makes sense for all $k\in\N$, and thus $\phi_k$ is well defined.

If $x^{(k+1)}=x^{(k)}$, inequality \eqref{fundeq} holds trivially. Then we assume $x^{(k+1)} \neq x^{(k)}$ which, thanks to \eqref{H1bis}, implies $f(\xk)> f(\xkk) \geq f(\bar\x)$. Hence, from \eqref{HH2} we obtain $f(\bar\x) < f(\xk)\leq f(\tykm)$ which together with \eqref{condKL}, gives
\begin{equation*}
x^{(k)},\tykm\in B(\bar{x},\rho)\cap [ f(\bar{x})<f<f(\bar{x})+\upsilon].
\end{equation*}
Therefore, we can use the KL inequality in both $x^{(k)}$ and $\tykm$.

Combining the KL inequality at $\tykm$ with \eqref{HH3} shows that $v^{(k-1)}\neq 0$ and $b\|x^{(k)}-x^{(k-1)}\|+\zeta_{k}\neq 0$. Since $v^{(k-1)}\in \sub f(\tykm)$, using again the KL inequality with \eqref{HH3} we obtain
\begin{equation}\label{KL+H2}
\phi'(f(\tykm)-f(\bar{x}))\geq \frac{1}{\|v^{(k-1)}\|}\geq \frac{1}{b\|x^{(k)}-x^{(k-1)}\|+\zeta_{k}}.
\end{equation}
Since $\phi$ is concave, its derivative is non increasing, thus $f(\tykm)-f(\bar{x})\geq f(x^{(k)})-f(\bar{x})$ implies
\begin{equation*}
\phi'(f(x^{(k)})-f(\bar{x}))\geq \phi'(f(\tykm)-f(\bar{x})).
\end{equation*}
Applying this fact to inequality \eqref{KL+H2} leads to
\begin{equation}\label{KL+H2bis}
\phi'(f(x^{(k)})-f(\bar{x}))\geq \frac{1}{b\|x^{(k)}-x^{(k-1)}\|+\zeta_{k}}.
\end{equation}
Using the concavity of $\phi$, \eqref{HH1} and \eqref{KL+H2bis}, we obtain
\begin{eqnarray*}
\fl \phi(f(\xk)-f(\bar\x))-\phi(f(\xkk)-f(\bar\x)) &\geq \phi'(f(\xk)-f(\bar\x))(f(\xk)-f(\xkk))\\
&\geq \phi'(f(\xk)-f(\bar\x))a\|\xkk-\xk\|^2\\
&\geq \frac{a\|\xkk-\xk\|^2}{b\|x^{(k)}-x^{(k-1)}\|+\zeta_{k}}.
\end{eqnarray*}
Rearranging terms in the last inequality yields
\begin{equation*}
\|x^{(k+1)}-x^{(k)}\|^2\leq \phi_k\left(\|x^{(k)}-x^{(k-1)}\|+\frac 1 b \zeta_{k}\right),
\end{equation*}
which, by applying the inequality $2\sqrt{uv}\leq u+v$, gives relation \eqref{fundeq}.

We are now going to establish that for $k=1,2,\ldots$
\begin{eqnarray}
& &x^{(k)},\ty^{(k-1)}\in B(\bar{x},\rho),\label{xj}\\
& &\sum\limits_{i=1}^{k}\|x^{(i+1)}-x^{(i)}\|+\|x^{(k+1)}-x^{(k)}\|\leq \|x^{(1)}-x^{(0)}\|+\chi_k+\frac 1 b \sum_{i=1}^k\zeta_i,\label{cauchy}
\end{eqnarray}
where $\chi_k=\frac{b}{a}[\phi(f(x^{(1)})-f(\bar{x}))-\phi(f(x^{(k+1)})-f(\bar{x}))]$.

Let us prove \eqref{xj} and \eqref{cauchy} by induction. Using \eqref{H1bis} with $k=0$ we have
\begin{equation}\label{H10}
\|x^{(1)}-x^{(0)}\|\leq \sqrt{\frac{f(x^{(0)})-f(x^{(1)})}{a}}\leq \sqrt{\frac{f(x^{(0)})-f(\bar{x})}{a}}.
\end{equation}
Combining the above equation with \eqref{technicalcond} and using the triangle inequality, we obtain
\begin{eqnarray*}
\|\bar{x}-x^{(1)}\|&\leq \|\bar{x}-x^{(0)}\|+\|x^{(0)}-x^{(1)}\|\\
                   &\leq \|\bar{x}-x^{(0)}\|+\sqrt{\frac{f(x^{(0)})-f(\bar{x})}{a}} < \rho,
\end{eqnarray*}
namely $x^{(1)}\in B(\bar{x},\rho)$. Using \eqref{H1tris} with $k=0$ and applying the same arguments as before, we also have $\ty^{(0)}\in B(\bar{x},\rho)$. Finally, direct use of \eqref{fundeq} shows that \eqref{cauchy} holds with $k=1$.

By induction, suppose that \eqref{xj} and \eqref{cauchy} hold for some $k=j\geq 1$. First we prove that $x^{(j+1)}\in B(\bar{x},\rho)$. We have
\begin{eqnarray*}
\fl \|\x^{(j+1)}-\bar\x\|&\leq \|\x^{(0)} -\bar\x\|+\|\x^{(0)}-\x^{(1)}\|+\sum_{i=1}^j \|\x^{(i+1)}-\x^{(i)}\|\\
&\leq \|\x^{(0)} -\bar\x\| + 2\|\x^{(0)}-\x^{(1)}\|+\chi_j +\frac{1}{b}\sum_{i=1}^j\zeta_i\\
&\leq \|\x^{(0)} -\bar\x\| + 2\sqrt{\frac{f(\x^{(0)})-f(\bar\x)}{a}}+\frac b a \phi(f(\x^{(0)})-f(\bar\x)) +\frac{1}{b}\sum_{i=1}^j\zeta_i\\
&<\rho,
\end{eqnarray*}
where the first inequality follows from the triangle inequality, the second one from \eqref{cauchy} with $k=j$, the third one from \eqref{H10} and the monotonicity of $\phi$ and the last one from \eqref{technicalcond}.
Similarly, we can prove that $y^{(j)}\in B(\bar{x},\rho)$. Noticing that $f(\bar{x})\leq f(x^{(k+1)})\leq f(x^{(k)})\leq f(x^{(0)})$, \eqref{H1tris} yields
\begin{equation*}
\|\ty^{(j)}-x^{(j)}\|\leq \sqrt{\frac{f(x^{(0)})-f(\bar{x})}{a\lambda_{\min}^2}}.
\end{equation*}
By using the above relation, the triangle inequality, \eqref{cauchy} with $k=j$, the monotonicity of $\phi$ and \eqref{technicalcond}, we have
\begin{eqnarray*}
\fl \|\bar{x}-\ty^{(j)}\| \leq \ \|\bar{x}-x^{(0)}\|+\|x^{(0)}-x^{(1)}\| +\sum\limits_{i=1}^{j}\|x^{(i+1)}-x^{(i)}\|+\|x^{(j+1)}-x^{(j)}\| +\|x^{(j)}-\ty^{(j)}\|\\
\leq \ \|\bar{x}-x^{(0)}\|+2\|x^{(0)}-x^{(1)}\|+\chi_j+\frac{1}{b}\sum_{i=1}^j\zeta_i+\|x^{(j)}-\ty^{(j)}\|\\
\leq \ \|\bar{x}-x^{(0)}\|+3\sqrt{\frac{f(x^{(0)})-f(\bar{x})}{a\lambda_{\min}^2}}+\frac{b}{a}\phi(f(x^{(0)})-f(\bar{x}))+\frac{1}{b}\sum_{i=1}^j \zeta_i\\
\leq \ \rho,
\end{eqnarray*}
or equivalently $\ty^{(j)}\in B(\bar{x},\rho)$. Now we observe that \eqref{fundeq} with $k=j+1$ writes as
\begin{equation*}
2\|x^{(j+2)}-x^{(j+1)}\|\leq \|x^{(j+1)}-x^{(j)}\|+\phi_{j+1}+ \frac 1 b \zeta_{j+1}.
\end{equation*}
Adding the above inequality with \eqref{cauchy} (with $k=j$) yields \eqref{cauchy} with $k=j+1$, which completes the induction proof.

By directly using \eqref{cauchy}, we get
\begin{equation*}
\sum\limits_{i=1}^{k}\|x^{(i+1)}-x^{(i)}\|\leq \|x^{(1)}-x^{(0)}\|+\frac{b}{a}\phi(f(x^{(1)})-f(\bar{x}))+\frac{1}{b}\sum_{i=1}^{k}\zeta_i
\end{equation*}
and (on account of \eqref{HH3}) therefore
\begin{equation*}
\sum\limits_{i=1}^{+\infty}\|x^{(i+1)}-x^{(i)}\|<+\infty,
\end{equation*}
which implies that the sequence $\{x^{(k)}\}_{k\in \N}$ converges to some $x^{*}$. Considering that $\bar{x}$ is a limit point of the sequence, it must be $x^{*}=\bar{x}$.
\end{proof}

\silviacorr{When $\tyk = \yk=\prox_{\ak f_1}^{D_k} (\zk)$, we have $\thk(\tyk)=0$ and, thanks to Lemma \ref{keylemma}, \eqref{HH3} is automatically guaranteed with $\zeta_k\equiv 0$.} When this choice is made, Algorithm \ref{algo:nuSGP} becomes an exact proximal--gradient method, whose convergence properties are stated in the following corollary, which is a direct consequence of Lemma \ref{keylemma} and Theorem \ref{thm:abstract_conv}.
\begin{Cor}\label{cor:exactSGPconverge}
Suppose that $f$ is a KL function. Let $\{\xk\}_\kinN$ and $\{\lamk\}_\kinN$ be the sequences generated by Algorithm \ref{algo:nuSGP} with $\tyk=\yk$ for all $k\geq 0$. If there exists a limit point $\bar \x$ of $\{\xk\}_\kinN$, then
\begin{itemize}
\item[(i)] $\lim\limits_{k\to\infty} f(\xk) = f(\bar\x)$;
\item[(ii)] $\bar \x$ is a stationary point for problem \eqref{minprob_statement};
\item[(iii)] the sequence $\{\xk\}_\kinN$ converges to $\bar{x}$ and has finite length.
%\item[(iv)] the convergence rates of Theorem \ref{thm:conv_rate_1} and \ref{thm:conv_rate_2} hold for Algorithm \ref{algo:nuSGP}.
\end{itemize}
\end{Cor}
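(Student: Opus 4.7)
The plan is to reduce everything to Theorem~\ref{thm:abstract_conv} and Proposition~\ref{real_lemma5}, observing that in the exact case $\tyk=\yk$ all approximation errors appearing in Lemma~\ref{keylemma} vanish. Conclusions (i) and (ii) then follow directly from Proposition~\ref{real_lemma5}, which already provides both $\lim_k f(\xk) = f(\bar\x)$ and the stationarity of $\bar\x$ under the sole hypothesis that a limit point exists; no KL assumption is needed for those two claims.

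The substance of the proof is therefore to establish (iii) by verifying condition~\eqref{HH3} with $\zeta_k\equiv 0$, so that the summability requirement $\sum_k\zeta_k<\infty$ is trivial. The key point is that $\yk = \argmin_{\x}\hk(\x)$ by \eqref{minhk}, and $\hk$ is strongly convex with $\gamma=1$, so the first-order optimality condition gives $0\in\partial \hk(\yk)$, i.e.
\begin{equation*}
-\nabla f_0(\xk) - \frac{1}{\ak} D_k(\yk-\xk) \in \partial f_1(\yk).
\end{equation*}
Adding $\nabla f_0(\yk)$ and using the sum rule of Lemma~\ref{sumrule}, I obtain
\begin{equation*}
\vk := \nabla f_0(\yk) - \nabla f_0(\xk) - \tfrac{1}{\ak} D_k(\yk-\xk) \in \partial f(\yk) = \partial f(\tyk),
\end{equation*}
an \emph{exact} (not $\epsilon$-approximate) element of the subdifferential. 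By Assumption~\ref{ass1}(iii) and the bounds $\ak\geq\alpha_{\min}$, $\|D_k\|\leq\mu$, one gets $\|\vk\|\leq (L+\mu/\alpha_{\min})\|\tyk-\xk\|$.

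To convert this into the form \eqref{HH3}, I would bound $\|\tyk-\xk\|$ by $\|\xkk-\xk\|$ up to a constant: from \ref{step5} either $\xkk=\tyk$, in which case $\|\xkk-\xk\|=\|\tyk-\xk\|$, or $\xkk = \xk+\lamk(\tyk-\xk)$ with $\lamk\geq\lambda_{\min}>0$ by \eqref{lambdastar}, so $\|\xkk-\xk\|\geq\lambda_{\min}\|\tyk-\xk\|$. In both cases $\|\tyk-\xk\|\leq \lambda_{\min}^{-1}\|\xkk-\xk\|$, which yields $\|\vk\|\leq b\|\xkk-\xk\|$ with $b=(L+\mu/\alpha_{\min})/\lambda_{\min}$. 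Therefore \eqref{HH3} holds with $\zeta_k\equiv 0$, and Theorem~\ref{thm:abstract_conv} applies to give the finite length of $\{\xk\}_\kinN$ and convergence of the whole sequence to $\bar\x$.

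There is no real obstacle here: the only subtle point is remembering that \ref{step5} may replace $\xk+\lamk\dk$ by $\tyk$, so the relation between $\|\xkk-\xk\|$ and $\|\tyk-\xk\|$ has to be checked in both branches; the bound $\lamk\geq\lambda_{\min}$ from \eqref{lambdastar} takes care of the non-trivial case. Everything else is a direct invocation of results already proved.
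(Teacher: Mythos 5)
Your proposal is correct and follows essentially the same route as the paper: the paper obtains the corollary by combining Proposition \ref{real_lemma5} with Theorem \ref{thm:abstract_conv} after noting that in the exact case \eqref{HH3} holds with $\zeta_k\equiv 0$, and your explicit derivation of the subgradient $\vk\in\partial f(\yk)$ with $\|\vk\|\leq b\|\xkk-\xk\|$, $b=(L+\mu/\alpha_{\min})/\lambda_{\min}$, is precisely the computation carried out in the proof of \eqref{HH3bis} in \ref{appendixA}, specialized to $\bar{\epsilon}_k=\hat{\epsilon}_k=0$ and $e^{(k)}=0$.
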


\subsection{Convergence rate analysis}\label{sec:rate}

We now investigate the convergence rate of \silviacorr{Algorithm \ref{algo:nuSGP}}. In particular, we follow the same outline given in \cite{Frankel-etal-2015}, in which three convergence results are proved for a similar abstract descent method when the function $\phi$ in Definition \ref{KL} is of the form $\phi(t)=\frac{C}{\theta}t^{\theta}$, with $C>0$ and $\theta\in (0,1]$. For instance, this assumption holds for continuous subanalytic functions on a closed domain \cite{Bolte-etal-2007a}, real analytic functions, semialgebraic functions and the sum of a real analytic function and a semialgebraic function (see \cite{Xu-Yin-2013} and references therein). Unlike in \cite{Frankel-etal-2015}, we do not restrict to the case where $\zeta_k\equiv 0$, but we only require that the convergence of the sequence $\{\zeta_k\}_{k\in \N}$ is controlled by the quantity $\thk(\tyk)$.\\
The following theorem expresses the distance of the sequence $\{x^{(k)}\}_{k\in \N}$ to the limit in terms of the function gap and is an adaption of \cite[Theorem 3]{Frankel-etal-2015}.
\begin{Thm}\label{thm:conv_rate_1}
Suppose that $f$ is a KL function and that the sequence $\{x^{(k)}\}_{k\in \N}$ satisfies \eqref{HH3} with
\begin{equation}\label{o_epsilonk}
\zeta_k = {\mathcal O}(\thk(\tyk)).
\end{equation}
Assume in addition that $\{\xk\}$ admits a limit point $\bar{x}$. Let $\phi$ be as in Definition \ref{KL} for the point $\bar{x}$ and set $\bar{\phi}(t)=\max \{\phi(t),\sqrt{t}\}$.
Then, there exists $M\in \R_{>0}$ such that
\begin{equation}
\|x^{(k)}-\bar{x}\|\leq\left(\frac{1}{\sqrt{a}}+\frac{M}{b}+\frac{b}{a}\right)\left(\bar{\phi}(f(x^{(k-1)})-f(\bar{x}))\right).
\end{equation}
\end{Thm}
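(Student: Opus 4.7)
The strategy is to exploit the one-step recursion \eqref{fundeq} that was already established inside the proof of Theorem \ref{thm:abstract_conv}, turn it into a bound on the tail sum $\sum_{i\geq k}\|x^{(i+1)}-x^{(i)}\|$, and then translate that bound into a function-value estimate by invoking the Armijo decrease \eqref{Armijo}, the overrelaxation \ref{step5} of Algorithm \ref{algo:nuSGP}, and the rate hypothesis \eqref{o_epsilonk}. The triangle inequality $\|x^{(k)}-\bar{x}\|\le \sum_{i\geq k}\|x^{(i+1)}-x^{(i)}\|$ (valid because $x^{(k)}\to\bar{x}$ by Theorem \ref{thm:abstract_conv}) then converts the tail sum into the distance to the limit.

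First I would sum the recursion $2\|x^{(k+1)}-x^{(k)}\|\le \|x^{(k)}-x^{(k-1)}\|+\phi_k+\frac{1}{b}\zeta_k$ from $k=K$ to $k=N$; the overlap on the two sides telescopes and the sum of $\phi_k$ collapses to $\frac{b}{a}[\phi(f(x^{(K)})-f(\bar{x}))-\phi(f(x^{(N+1)})-f(\bar{x}))]$. Letting $N\to\infty$ and using $\phi(0)=0$ together with Proposition \ref{real_lemma5} yields
\begin{equation*}
\sum_{i=k}^{\infty}\|x^{(i+1)}-x^{(i)}\|\le \|x^{(k)}-x^{(k-1)}\|+\frac{b}{a}\phi(f(x^{(k)})-f(\bar{x}))+\frac{1}{b}\sum_{i=k}^{\infty}\zeta_i.
\end{equation*}
Next I would bound the first summand by $\sqrt{(f(x^{(k-1)})-f(\bar{x}))/a}$ via inequality \eqref{H1bis} and dominate $\phi(f(x^{(k)})-f(\bar{x}))$ by $\phi(f(x^{(k-1)})-f(\bar{x}))$ using the monotonicity of $\phi$ and the monotonicity of $\{f(x^{(k)})\}$.

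The delicate term is the tail $\sum_{i\geq k}\zeta_i$. Assumption \eqref{o_epsilonk} gives a constant $C>0$ with $\zeta_i\le C\,|\thk[i](\ty^{(i)})|$. From \eqref{Armijo} together with $\lambda_i\geq\lambda_{\min}$ (inequality \eqref{lambdastar}) and \ref{step5}, I get $f(x^{(i+1)})-f(x^{(i)})\le\beta\lambda_{\min}\thk[i](\ty^{(i)})$, hence
\begin{equation*}
|\thk[i](\ty^{(i)})|\le \frac{1}{\beta\lambda_{\min}}\bigl(f(x^{(i)})-f(x^{(i+1)})\bigr),
\end{equation*}
and summing telescopes: $\sum_{i\geq k}\zeta_i\le \frac{C}{\beta\lambda_{\min}}\bigl(f(x^{(k)})-f(\bar{x})\bigr)\le \frac{C}{\beta\lambda_{\min}}\bigl(f(x^{(k-1)})-f(\bar{x})\bigr)$. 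Setting $M=C/(\beta\lambda_{\min})$ and $t_k=f(x^{(k-1)})-f(\bar{x})$, the accumulated estimate reads
\begin{equation*}
\|x^{(k)}-\bar{x}\|\le \frac{1}{\sqrt{a}}\sqrt{t_k}+\frac{b}{a}\phi(t_k)+\frac{M}{b}\,t_k.
\end{equation*}

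Finally I would pass from this mixed expression to $\bar{\phi}(t_k)$. By definition of $\bar\phi$, $\sqrt{t_k}\le \bar\phi(t_k)$ and $\phi(t_k)\le\bar\phi(t_k)$ trivially. The genuinely new term $t_k$ is handled by observing that $t_k\to 0$ (by \eqref{H3}), so eventually $t_k\le 1$ and therefore $t_k\le\sqrt{t_k}\le\bar\phi(t_k)$; absorbing the finitely many small-$k$ indices by enlarging $M$ if necessary delivers the claimed bound with coefficient $\frac{1}{\sqrt{a}}+\frac{M}{b}+\frac{b}{a}$. The main obstacle is precisely this step: unlike in \cite{Frankel-etal-2015}, where $\zeta_k\equiv 0$, here the inexactness produces a linear-in-$t_k$ correction, and it is exactly this perturbation that forces the theorem to be formulated with $\bar\phi$ rather than $\phi$; the chain $\zeta_i\lesssim|\thk[i](\ty^{(i)})|\lesssim f(x^{(i)})-f(x^{(i+1)})$ is what makes the correction telescopable and hence controllable by $t_k$.
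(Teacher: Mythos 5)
Your proposal is correct and follows essentially the same route as the paper's own proof: summing the recursion \eqref{fundeq}, controlling the tail $\sum_i\zeta_i$ by telescoping $\zeta_i\lesssim -h_\gamma^{(i)}(\ty^{(i)})\lesssim f(x^{(i)})-f(x^{(i+1)})$ via \eqref{Armijo}, \eqref{lambdastar} and \ref{step5}, bounding the leading step by \eqref{H1bis}, and absorbing the linear-in-$t_k$ correction into $\bar\phi$ using $t_k\le\sqrt{t_k}$ for large $k$. No gaps.
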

\begin{proof}
By combining \eqref{Armijo}, \eqref{inexact} and \eqref{lambdastar}, one can show that
\begin{eqnarray*}
 -\frac{\tau}{2}h_{\gamma}^{(k-1)}(\tilde{y}^{(k-1)}) &\leq \frac{\tau}{2}\left(\frac{f(x^{(k-1)})-f(x^{(k)})}{\beta \lambda_{k-1}}\right)\\
&\leq \frac{\tau}{2\beta \lambda_{\min}}\left(f(x^{(k-1)})-f(x^{(k)})\right).
\end{eqnarray*}
From \eqref{o_epsilonk} and the above inequality, there exists $M\in \R_{>0}$ such that
\begin{equation}\label{asymp_eps}
\zeta_k\leq M\left(f(x^{(k-1)})-f(x^{(k)})\right),
\end{equation}
for all $k\in \N$.\\
Let $s^{(k)}:=f(x^{(k)})-f(\bar{x})\geq 0$. If there exists $k\in \N$ such that $s^{(k)}=0$, then the algorithm terminates in a finite number of steps. Then we assume that $s^{(k)}>0$ for all $k\in \N$. As previously shown in the proof of Theorem \ref{thm:abstract_conv}, there exists $k_0\in \N$ such that \eqref{fundeq} holds for all $k\geq k_0$. Summing \eqref{fundeq} for $k=k_0,\ldots,N$, we get
\begin{equation}\label{ineq_eps}
\sum_{k=k_0}^{N}\|x^{(k+1)}-x^{(k)}\|\leq \|x^{(k_0)}-x^{(k_0-1)}\|+\frac{b}{a}\phi(s^{(k_0)})+\frac{1}{b}\sum_{k=k_0}^{N}\zeta_k.
\end{equation}
By using \eqref{asymp_eps}, summing it for $k=k_0,\ldots,N$ and observing that $f(x^{(N)})\geq f(\bar{x})$, \eqref{ineq_eps} yields the following inequality
\begin{equation}\label{ineq_eps_2}
\sum_{k=k_0}^{N}\|x^{(k+1)}-x^{(k)}\|\leq \|x^{(k_0)}-x^{(k_0-1)}\|+\frac{b}{a}\phi(s^{(k_0)})+\frac{M}{b} s^{(k_0-1)}.
\end{equation}
Applying the triangle inequality and passing to the limit, we obtain
\begin{eqnarray*}
\fl \|x^{(k_0)}-\bar{x}\|\leq \sum_{k=k_0}^{\infty}\|x^{(k+1)}-x^{(k)}\|\leq \|x^{(k_0)}-x^{(k_0-1)}\|+\frac{b}{a}\phi(s^{(k_0)})+\frac{M}{b} s^{(k_0-1)}\\
\leq \frac{1}{\sqrt{a}}\sqrt{f(x^{(k_0-1)})-f(x^{(k_0)})}+\frac{b}{a}\phi(s^{(k_0)})+\frac{M}{b} s^{(k_0-1)},
\end{eqnarray*}
where the last inequality follows from \eqref{H1bis}. Finally, recalling that $f(x^{(k_0)})\geq f(\bar{x})$, $\phi$ is an increasing function and $\{s^{(k)}\}_{k\in \N}$ is nonincreasing, we can write
\begin{equation}
\|x^{(k_0)}-\bar{x}\|\leq \frac{1}{\sqrt{a}}\sqrt{s^{(k_0-1)}}+\frac{b}{a}\phi(s^{(k_0-1)})+\frac{M}{b} s^{(k_{0}-1)}.
\end{equation}
Since $s^{(k_0-1)}\leq \sqrt{s^{(k_0-1)}}$ for a sufficiently large $k_0\in \N$, we conclude that $\|x^{(k_0)}-\bar{x}\|\leq \left(\frac{1}{\sqrt{a}}+\frac{M}{b}+\frac{b}{a}\right)\bar{\phi}(s^{(k_0-1)})$.
\end{proof}
The next result directly follows from the previous theorem and provides explicit rates of convergence, for both the function values and the iterates.
\begin{Thm}\label{thm:conv_rate_2}
Suppose that $f$ satisfies the KL property in $\bar{x}$ (a limit point of $\{x^{(k)}\}_{k\in \N}$) with $\phi(t)=\frac{C}{\theta}t^{\theta}$, where $C>0$ and $\theta\in (0,1]$, and that conditions \eqref{HH3} and \eqref{o_epsilonk} hold.
\begin{itemize}
\item[(i)] If $\theta=1$, then $\{x^{(k)}\}_{k\in \N}$ converges in a finite number of steps.
\item[(ii)] If $\theta\in [\frac{1}{2},1)$, then there exist $d>0$ and $\bar{k}\in \N$ such that
\begin{enumerate}
\item $f(x^{(k)})-f(\bar{x})=\mathcal{O}\left(e^{-d\left(k-\bar{k}\right)}\right)$
\item $\|x^{(k)}-\bar{x}\|=\mathcal{O}\left(e^{-\frac{d}{2}\left(k-\bar{k}+1\right)}\right)$.
\end{enumerate}
\item[(iii)] If $\theta\in (0,\frac{1}{2})$, then there exists $\bar{k}\in \N$ such that
\begin{enumerate}
\item $f(x^{(k)})-f(\bar{x})=\mathcal{O}\left(\left(k-\bar{k}\right)^{-\frac{1}{1-2\theta}}\right)$
\item $\|x^{(k)}-\bar{x}\|=\mathcal{O}\left(\left(k-\bar{k}+1\right)^{-\frac{\theta}{1-2\theta}}\right)$.
\end{enumerate}
\end{itemize}
\end{Thm}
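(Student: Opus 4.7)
My plan is to reduce the rate analysis to a scalar recursion on the function gap $s^{(k)}:=f(\xk)-f(\bar{x})\ge 0$ and then transfer the resulting decay estimates to the iterates by invoking Theorem~\ref{thm:conv_rate_1}. Throughout, we may assume $s^{(k)}>0$ for all $k$, since otherwise \eqref{HH1} and the monotonicity of $\{f(\xk)\}_{k\in\N}$ force the iterates to become eventually constant and there is nothing to prove.

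To derive the master recursion I first observe that, by the sequence convergence $\xk\to\bar{x}$ established in Theorem~\ref{thm:abstract_conv} together with \eqref{H1tris} and the summability $\sum_k\|\xkk-\xk\|<\infty$, one has $\tyk\to\bar{x}$, and hence $\tyk\in U$ for $k$ large; by Proposition~\ref{real_lemma5} and \eqref{HH2}, also $f(\bar{x})<f(\tyk)<f(\bar{x})+\upsilon$ eventually. Applying the KL inequality at $\tyk$ with $\phi'(t)=Ct^{\theta-1}$ and combining with \eqref{HH3} yields
\begin{equation*}
(f(\tyk)-f(\bar{x}))^{1-\theta}\leq C\|v^{(k)}\|\leq C\bigl(b\|\xkk-\xk\|+\zeta_{k+1}\bigr).
\end{equation*}
Since $f(\tyk)\geq f(\xkk)$ by \eqref{HH2}, the left-hand side is bounded below by $(s^{(k+1)})^{1-\theta}$; estimate \eqref{H1bis} gives $\|\xkk-\xk\|\leq\sqrt{(s^{(k)}-s^{(k+1)})/a}$; and the bound $\zeta_{k+1}\leq M(s^{(k)}-s^{(k+1)})$ shown in the course of Theorem~\ref{thm:conv_rate_1} (see \eqref{asymp_eps}) applies under assumption \eqref{o_epsilonk}. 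Since $s^{(k)}-s^{(k+1)}\to 0$, the linear term is dominated by the square-root term, so for $k$ large enough one obtains
\begin{equation*}
(s^{(k+1)})^{2(1-\theta)}\leq K\bigl(s^{(k)}-s^{(k+1)}\bigr)
\end{equation*}
for some constant $K>0$.

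The three cases now fall out of this recursion. In case (i), $\theta=1$, the KL inequality forces $\|v^{(k)}\|\geq 1/C$ while the right-hand side of \eqref{HH3} tends to $0$; this contradiction together with \eqref{HH2} and the monotonicity of $\{f(\xk)\}_{k\in\N}$ forces $f(\tyk)=f(\bar{x})$ eventually, whence \eqref{HH1} makes the iterates constant from some index on. In case (ii), $\theta\in[1/2,1)$, we have $2(1-\theta)\leq 1$, so $(s^{(k+1)})^{2(1-\theta)}\geq s^{(k+1)}$ once $s^{(k+1)}\leq 1$; hence $(1+K^{-1})s^{(k+1)}\leq s^{(k)}$ and the exponential estimate $s^{(k)}=\mathcal{O}(e^{-d(k-\bar{k})})$ follows. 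In case (iii), $\theta\in(0,1/2)$, set $r=2(1-\theta)>1$; the recursion $(s^{(k+1)})^r\leq K(s^{(k)}-s^{(k+1)})$ is the classical nonlinear difference inequality which, by bounding $(s^{(k+1)})^{1-r}-(s^{(k)})^{1-r}$ from below via the mean-value theorem applied to $t\mapsto t^{1-r}$ and then summing over $k$, yields $s^{(k)}=\mathcal{O}((k-\bar{k})^{-1/(r-1)})=\mathcal{O}((k-\bar{k})^{-1/(1-2\theta)})$; see e.g.~\cite{Attouch-Bolte-2009,Frankel-etal-2015}.

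Finally, the iterate rates follow by plugging these function-gap estimates into Theorem~\ref{thm:conv_rate_1}, which gives $\|\xk-\bar{x}\|\leq C'\,\bar{\phi}(s^{(k-1)})$ with $\bar{\phi}(t)=\max\{(C/\theta)t^{\theta},\sqrt{t}\}$. For $\theta\geq 1/2$ and small $t$ one has $\bar{\phi}(t)\asymp\sqrt{t}$, yielding $\mathcal{O}(e^{-d(k-\bar{k}+1)/2})$, whereas for $\theta<1/2$ one has $\bar{\phi}(t)\asymp t^{\theta}$, yielding $\mathcal{O}((k-\bar{k}+1)^{-\theta/(1-2\theta)})$, all multiplicative constants being absorbed into the $\mathcal{O}$ notation. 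The main technical obstacle will be case~(iii): the recursion is non-contractive and the rate must be extracted through the delicate monotone-integration argument sketched above; case~(i) also demands some care, since it rests on the incompatibility between the KL inequality at $\theta=1$ and the vanishing subgradient estimate in~\eqref{HH3}.
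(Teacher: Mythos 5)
Your proposal is correct and follows essentially the same route as the paper: both derive the scalar recursion $\phi'(s^{(k+1)})^2(s^{(k)}-s^{(k+1)})\geq \mathrm{const}$ by applying the KL inequality at $\tyk$ and combining it with \eqref{HH1}, \eqref{HH2}, \eqref{HH3} and the bound $\zeta_{k+1}\leq M(s^{(k)}-s^{(k+1)})$ from \eqref{asymp_eps}, and both transfer the resulting function-value rates to the iterates via Theorem \ref{thm:conv_rate_1}. The only differences are cosmetic: the paper handles the $\zeta_{k+1}$ cross-terms by the bound $\zeta^2\leq\zeta\leq\sqrt{\zeta}$ to obtain a uniform constant, and delegates the resolution of the recursion to \cite[Theorem 3.4]{Frankel-etal-2015}, whereas you absorb the linear $\zeta$-term into the square-root term for large $k$ and work out the three cases of the recursion directly.
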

\begin{proof}
First we can assume that $s^{(k)}=f(x^{(k)})-f(\bar{x})>0$ for all $k\in \N$, since otherwise the algorithm would terminate in a finite number of steps. \\
Let $U$ be as in Definition \ref{KL} for the point $\bar{x}$. From Theorem 1 we know that $\{x^{(k)}\}_{k\in \N}$ converges to $\bar{x}$ and, because of \eqref{ykconv0}, also $\{\tyk\}_{k\in \N}$ does. Therefore there exists $\bar{k}\in \N$ such that
\begin{equation*}
x^{(k+1)},\tyk \in U\cap [f(\bar{x})<f<f(\bar{x})+v]
\end{equation*}
for all $k\geq \bar{k}$, thus allowing to apply the KL inequality in $\tyk$.\\
Let us take the squares of both sides of condition \eqref{HH3}, divide and multiply them by $b^2$ and $a$ respectively, thus obtaining
\begin{equation*}
\frac{a}{b^2}\|v^{(k)}\|^2\leq a\|x^{(k+1)}-x^{(k)}\|^2+\frac{a}{b^2}\zeta_{k+1}^2+\frac{2a}{b}\zeta_{k+1}\|x^{(k+1)}-x^{(k)}\|.
\end{equation*}
By applying condition \eqref{HH1} to the previous inequality, we get the following relation
\begin{equation*}
\frac{a}{b^2}\|v^{(k)}\|^2\leq (s^{(k)}-s^{(k+1)})+\frac{a}{b^2}\zeta_{k+1}^2+\frac{2\sqrt{a}}{b}\zeta_{k+1}\sqrt{s^{(k)}-s^{(k+1)}}.
\end{equation*}
Since $\lim_{k\to \infty}\zeta_k=0$, it is possible to choose $\bar{k}\in \N$ such that $\zeta_{k+1}^2\leq \zeta_{k+1} \leq \sqrt{\zeta_{k+1}}$ holds for all $k\geq \bar{k}$. Recalling that thanks to \eqref{o_epsilonk} there exists $M>0$ such that $\zeta_{k+1}\leq M (s^{(k)}-s^{(k+1)})$ (see \eqref{asymp_eps}), we obtain
\begin{equation*}
\frac{a}{b^2}\|v^{(k)}\|^2\leq m(s^{(k)}-s^{(k+1)}),
\end{equation*}
where $m=1+\frac{a}{b^2}M+\frac{2\sqrt{aM}}{b}$.\\
Set $t^{(k)}=f(\tyk)-f(\bar{x})$. Then, by multiplying each side of the inequality by $\phi'(t^{(k)})^2$, we have
\begin{equation*}
\fl m\phi'(s^{(k+1)})^2(s^{(k)}-s^{(k+1)}) \geq m \phi'(t^{(k)})^2(s^{(k)}-s^{(k+1)}) \geq \frac{a}{b^2}\phi'(t^{(k)})^2\|v^{(k)}\|^2 \geq \frac{a}{b^2},
\end{equation*}
where the extreme left inequality has been derived using condition \eqref{HH2}, whereas the extreme right one has been obtained by applying the KL inequality in $\tyk$. Therefore, we have come to the following relation
\begin{equation}\label{eq_rate}
\phi'(s^{(k+1)})^2(s^{(k)}-s^{(k+1)})\geq \frac{a}{mb^2}.
\end{equation}
Equation \eqref{eq_rate} is identical to \cite[Theorem 3.4, Equation 6]{Frankel-etal-2015}, from which {\it (i)}, the rates on the function values in part 1 of {\it (ii)} and in part 1 of {\it (iii)} follow immediately, whereas the rates on the iterates contained in part 2 of {\it (ii)} and part 2 of {\it (iii)} are obtained by combining the rates on the function values and Theorem \ref{thm:conv_rate_1}.
\end{proof}

\silviacorr{Since choosing $\tyk = \yk$ at \ref{step2} implies that \eqref{HH3} is satisfied with $\zeta_k \equiv 0$, the convergence rates of Theorem \ref{thm:conv_rate_1} and \ref{thm:conv_rate_2} hold for the exact version of Algorithm \ref{algo:nuSGP}}.

\section{Numerical experience}\label{sec4}

In order to confirm the efficiency of the suggested algorithm we carry out different numerical experiments on realistic optimization problems arising from imaging applications. We compare the obtained results with those provided by some recent methods already applied in such a framework. \silviacorr{All the numerical results in the following sections have been obtained on a PC equipped with an INTEL Core i7 processor 2.70GHz with 8GB of RAM running Matlab ver 7 R2010b.}

\subsection{Image deconvolution in presence of signal dependent Gaussian noise}\label{sec_exp1}

In this section we consider the image restoration problem described in \cite{Chouzenoux-etal-2014}, where the observed data $\g\in\R^n$ are assumed to be acquired according to the model
\begin{equation*}
g_i = (H\x_{\rm{true}})_i+\sigma_i((H\x_{\rm{true}})_i)w_i,
\end{equation*}
where $\x_{\rm{true}} \in \R^n$ denotes the original image to be reconstructed, $H\in\R^{n\times n}$ is a matrix with non-negative entries representing the acquisition system, $\w = (w_1,\cdots,w_n)^T$ is a realization of Gaussian random vector with zero mean and covariance matrix $I_n$ and $\sigma_i:\R\to\R_{>0}$ is defined as
\begin{equation*}
\sigma_i(u) = \sqrt{a_iu+b_i},
\end{equation*}
with $a_i\in\R_{\geq 0}$, $b_i\in\R_{>0}$, for all $i=1,...,n$.

Following the Bayesian paradigm \cite{Geman-Geman-1984}, an estimate of the true image $\x_{\rm{true}}$ can be computed by solving the minimization problem \eqref{minprob} where $f_0$ is a data discrepancy function corresponding to the negative log--likelihood of the data, and $f_1$ is a regularization term chosen to induce some desired properties on the computed solution.

In this case, the negative log-likelihood function is given by
\begin{equation}\label{f0_CPR}
f_0(\x) = \frac 1 2\sum_{i=1}^n \frac{((H\x)_i-g_i)^2}{a_i(H\x)_i + b_i} + \log(a_i(H\x)_i+b_i),
\end{equation}
which is nonconvex and smooth in $\dom(f_0)=\{\x\in\R^n:a_i(H\x)_i + b_i>0 \ \forall i=1,...,n\}$.

If one wants to preserve the edges in the reconstruction and also the non-negativity of the pixel values, the regularization term can be chosen as the sum of the total variation functional \cite{Rudin-Osher-Fatemi-1992} and the indicator function of the set $\R^n_{\geq 0}$, i.e.
\begin{equation}\label{eq:TV+Ind}
f_1(x) = \rho \sum_{i=1}^n \|\nabla_i\x\| + \iota_{\R_{\geq 0}^n}(x),
\end{equation}
where $\rho\in\R_{>0}$ is a regularization parameter and $\nabla_i\in \R^{2\times n}$ represents the discrete gradient of the two dimensional object $\x$ at pixel $i$.

Since $b_i> 0$ for all $i=1,\ldots,n$ and $H$ has non--negative entries, we have $\dom(f_0)\supset \dom(f_1)$ and, in addition, $\nabla f_0$ is Lipschitz continuous in $\dom(f_1)$. Moreover, the graph of $f$ lies in the o-minimal structure containing the graph of the exponential function \cite{Bolte-etal-2007b}, thus $f$ is a KL function.

In order to validate the effectiveness of the proposed method, we consider the test problem ``jetplane'', which can be downloaded from \cite{Repetti2013} (see figure \ref{fig:fig1}). Here, the operator $H$ corresponds to a convolution  with a truncated Gaussian function of size $7\times 7$, $a_i=b_i = 1$ for all $i=1,...,n$ and $\rho = 0.03$.

Since the proximal operator of $f_1$ is not available in a closed form, it has to be approximated via an iterative solution. We observe that the nonsmooth regularization term has the form $f_1(\x) = \phi(A\x)$ where $A^T = (\nabla_1^T,\ldots,\nabla_n^T,I_n)\in \R^{n\times 3n}$ and $\phi:\R^{3n}\to\bar\R$ is defined as
\begin{equation*}
\phi(t) = \sum_{i=1}^n\left\|\left(
\begin{array}{c}
t_{2i-1} \\
t_{2i}
\end{array}
\right)\right\| + \iota_{\R_{\geq 0}^n}\left(
\begin{array}{c}
t_{2n+1} \\
\vdots \\
t_{3n}
\end{array}\right).
\end{equation*}
We implement an inexact version of Algorithm \ref{algo:nuSGP}, where the approximate proximal point $\tyk$ satisfying \eqref{inexact} is computed as described in Section \ref{sec:algo}; in particular, as inner solver for the dual problem \eqref{duale} we adopt the algorithm proposed in \cite{Chambolle-Dossal-2014}. We remark that, if \eqref{HH3} is not ensured, we could not invoke Theorem \ref{thm:abstract_conv} to guarantee the convergence of the whole sequence. However, the stationarity of the limit points is guaranteed by Proposition \ref{real_lemma5}, which holds independently of \eqref{HH3}.

We implement Algorithm \ref{algo:nuSGP} in Matlab environment, setting $\alpha_{\min}=10^{-5}$, $\alpha_{\max}=10^2$, $\delta = 0.5$, $\beta = 10^{-4}$, $\gamma = 1$, $\tau = 10^6-1$, which are the same parameter choices used for VMILA in \cite{Bonettini-Loris-Porta-Prato-2015}. Moreover, as in \cite{Bonettini-Loris-Porta-Prato-2015} we choose the variant of the FISTA algorithm \cite{Beck-Teboulle-2009a} proposed in \cite{Chambolle-Dossal-2014} with $a = 2.1$ as inner solver.

As concerns the choice of the metric, we consider three different choices for $D_k$, all leading to a diagonal matrix whose entries are defined as follows:
\begin{itemize}
\item[MM] $(D_k)_{ii}^{-1} = \max\{\min\{(A_k)_{ii},\mu\},\frac{1}{\mu}\}$, where $A_k$ is defined in \cite[formula (36)]{Chouzenoux-etal-2014} with $\varepsilon = 0$. This matrix $A_k$ is introduced in \cite{Chouzenoux-etal-2014}, following the Majorization-Minimization (MM) approach, where the authors show that the quadratic function $Q(\x,\xk) = f_0(\xk) + \nabla f_0(\xk)^T(\x-\xk) + \frac 1 2 \|\x-\xk\|^2_{A_k}$ is a majorant function for $f_0$, i.e. $f_0(\x)\leq Q(\x,\xk)$ for all $\x\in\dom(f_1)$.
\item[SG] $(D_k)_{ii}^{-1} = \max\{\min\{\frac{\xk_i}{V_i(\xk)+\epsilon},\mu\},\frac{1}{\mu}\}$, where $\epsilon$ is set to the machine precision and $V(\xk)$ is defined as $V(\xk) = H^T s\k$ with
\begin{equation}\nonumber
 s\k_i = (H\x)_i \frac{a_i ( (H\x)_i + \g_i) + 2b_i}{2 (a_i(H\x)_i + b_i)^2} + \frac{a_i}{2 (a_i(H\x)_i + b_i)}.
\end{equation}
This choice can be explained in the context of the split gradient (SG) methods \cite{Lanteri-etal-2002}, which are scaled gradient methods based on a splitting of the gradient $\nabla f_0(\xk)$ in the difference between a positive part $V(\xk)$ and a non-negative one $U(\xk)$.
\item[I] $D_k = I_n$.
\end{itemize}
The parameter $\mu$ bounding the diagonal entries of $D_k$ is set to $10^{10}$. Once computed the matrix $D_k$, the stepsize parameter $\alpha_k$ is chosen using a recent strategy proposed in \cite{Porta-Prato-Zanni-2015} and based on the approximation of the eigenvalues of the Hessian matrix of the objective function by means of a Lanczos--like process (see also \cite{Fletcher-2012} for more details in the unconstrained case). In our problem, for a fixed positive integer $m$ (in our experiments we consider $m=3$), one has to:
\begin{itemize}
\item[a)] Define the matrices
\begin{equation*}
\widetilde{G} = \left[D_{k-m}^{1/2}\widetilde{\ve{g}}^{(k-m)}, \ldots, D_{k-1}^{1/2}\widetilde{\ve{g}}^{(k-1)}\right] \ , \
\Gamma=
\left[
\begin{array}{ccc}
\alpha^{-1}_{k-m}			&	&\\
-\alpha^{-1}_{k-m}	&\ddots		&\\
	&\ddots	&\alpha^{-1}_{k-1}\\
	&	&-\alpha^{-1}_{k-1}\\
\end{array}
\right],
\end{equation*}
by collecting $m$ consecutive steplengths and reduced gradients
\begin{equation}\label{redgrad}
\widetilde{g}^{(k)}_j = \cases{ 0&if $x^{(k)}_j=0$, \\ \left[\nabla f_0(\ve{x}^{(k)})\right]_j&if $x^{(k)}_j > 0$}.
\end{equation}
\item[b)] Compute the Cholesky factorization $R^TR$ of the $m \times m$ matrix $\widetilde{G}^T\widetilde{G}$, the solution $\ve{r}$ of the linear system $R^T\ve{r}=\widetilde{G}^TD_k^{1/2}\widetilde{\ve{g}}^{(k)}$ and the $m \times m$ matrix $\Phi = [R \quad \ve{r}]\Gamma R^{-1}$.
\item[c)] Compute the eigenvalues of the symmetric and tridiagonal approximation $\widetilde{\Phi}$ of $\Phi$ defined as
\begin{equation*}
\widetilde{\Phi} = {\rm{diag}}(\Phi) + {\rm{tril}}(\Phi,-1) + {\rm{tril}}(\Phi,-1)^T,
\end{equation*}
being ${\rm{diag}}(\cdot)$ and ${\rm{tril}}(\cdot,-1)$ the diagonal and the strictly lower triangular parts of a matrix, and use the reciprocal of the positive eigenvalues obtained as steplengths for the next iterations.
\end{itemize}
We compare the performances of our method with the variable metric forward backward (VMFB) algorithm \cite{Chouzenoux-etal-2014}, in the implementation provided by the authors which can be downloaded from \cite{Repetti2013}. We observed that both methods achieve the same value of the objective function in the limit, denoted by $f^*$, which is in general not guaranteed for nonconvex problems. Thus in this case we can compare the optimization properties of the algorithms by measuring the progress toward this value, which has been numerically approximated first by running 5000 iterations of all methods and retaining the smallest value.

\def\subFigScale{0.3}
\begin{figure}
\begin{center}
\begin{tabular}{ccc}
\includegraphics[scale=\subFigScale]{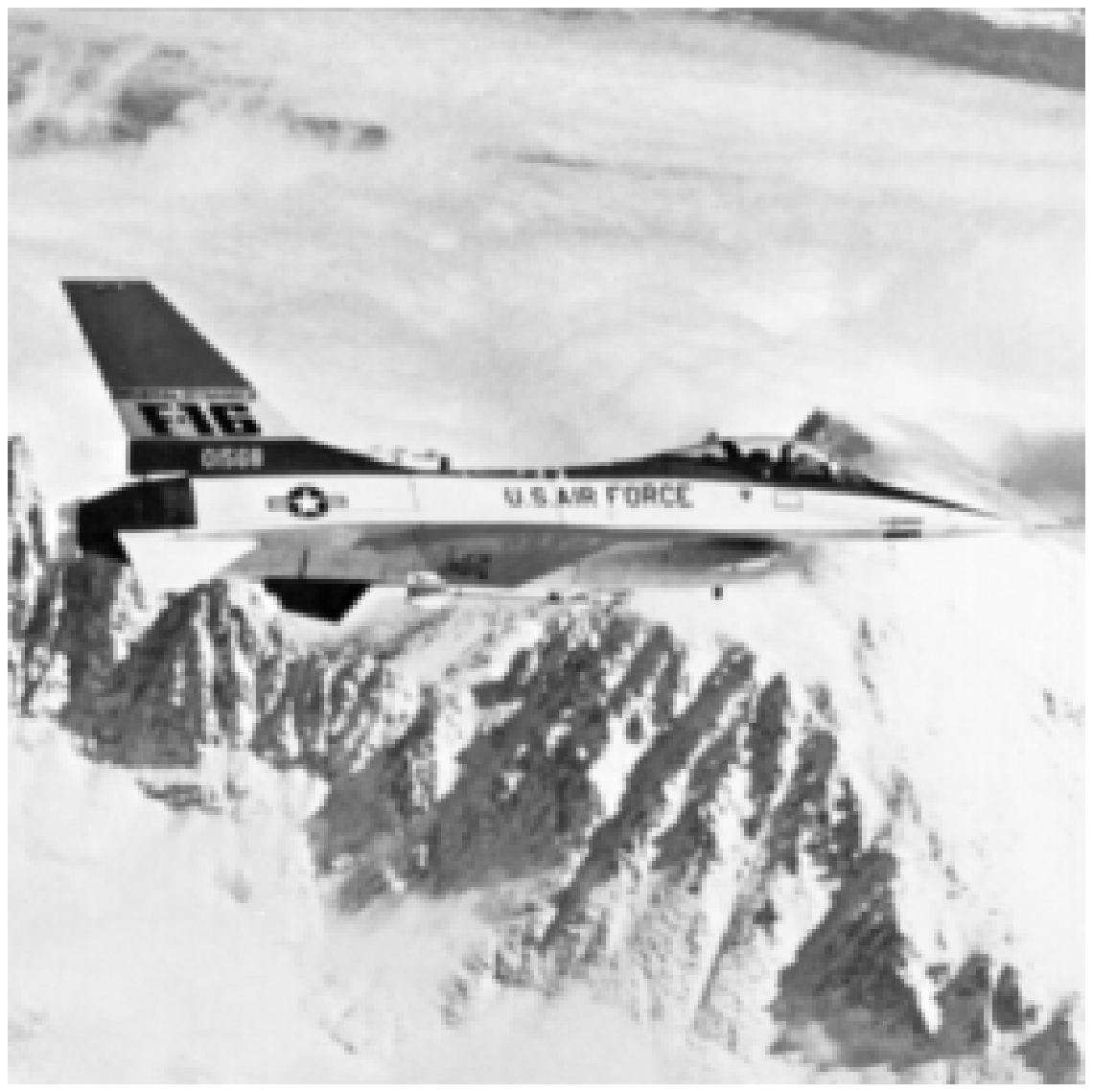}&\includegraphics[scale=\subFigScale]{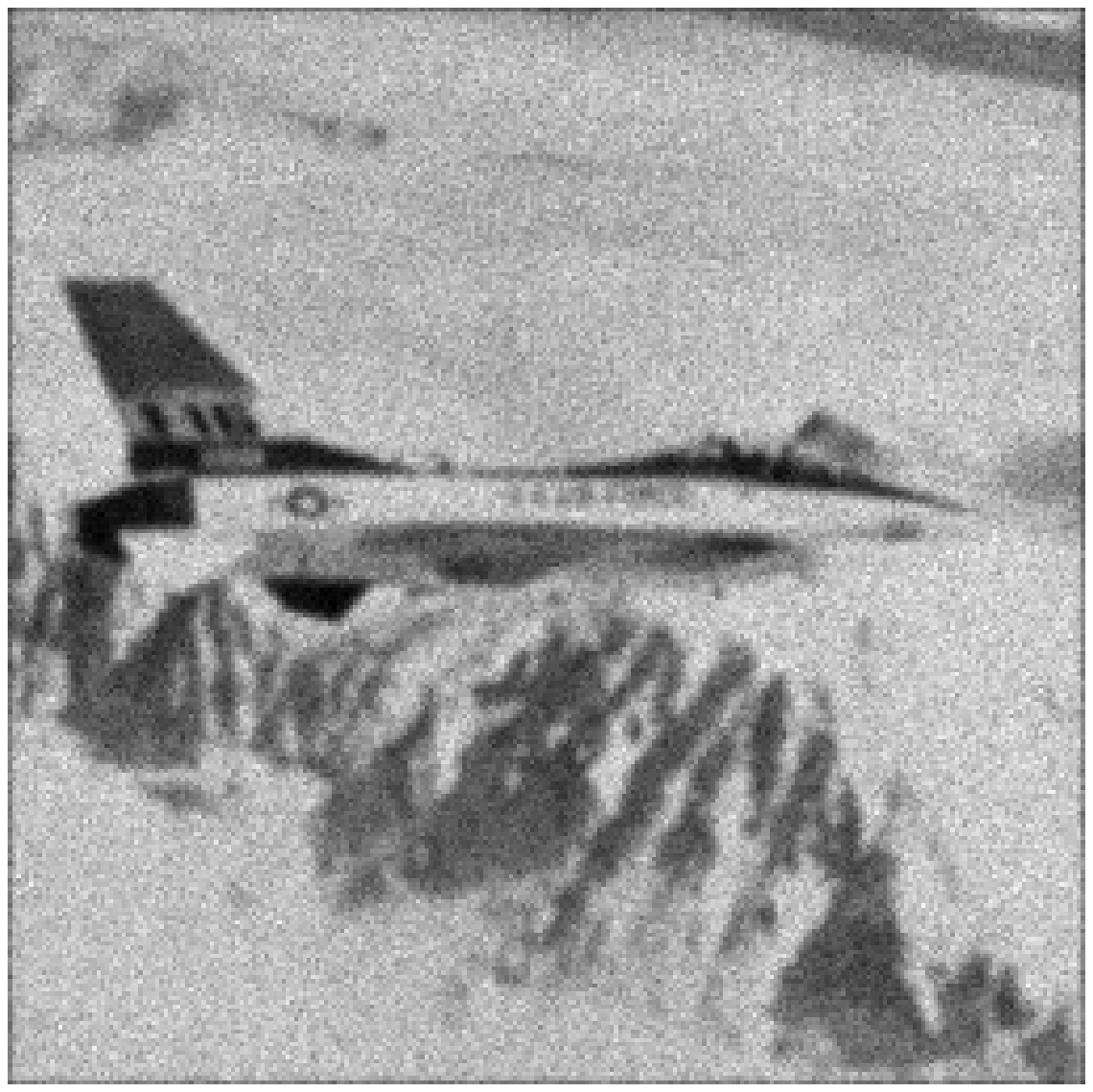}&\includegraphics[scale=\subFigScale]{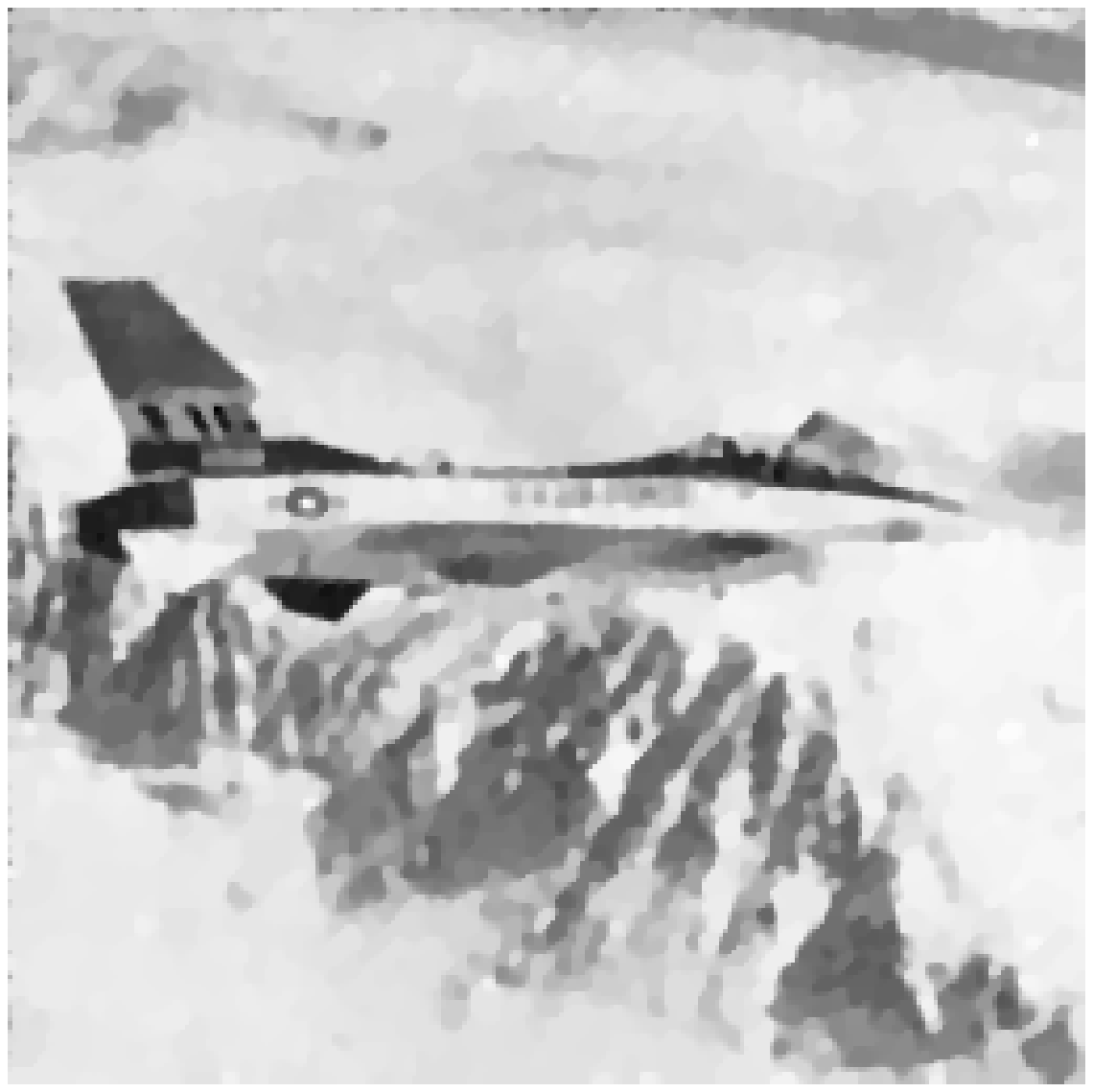}
\end{tabular}
\caption{Jetplane test problem: original object (left), blurred and noisy image (middle), and VMILAn reconstruction (right).}
\label{fig:fig1}
\end{center}
\end{figure}

\begin{figure}
\begin{center}
\begin{tabular}{cc}
\includegraphics[scale=\subFigScale]{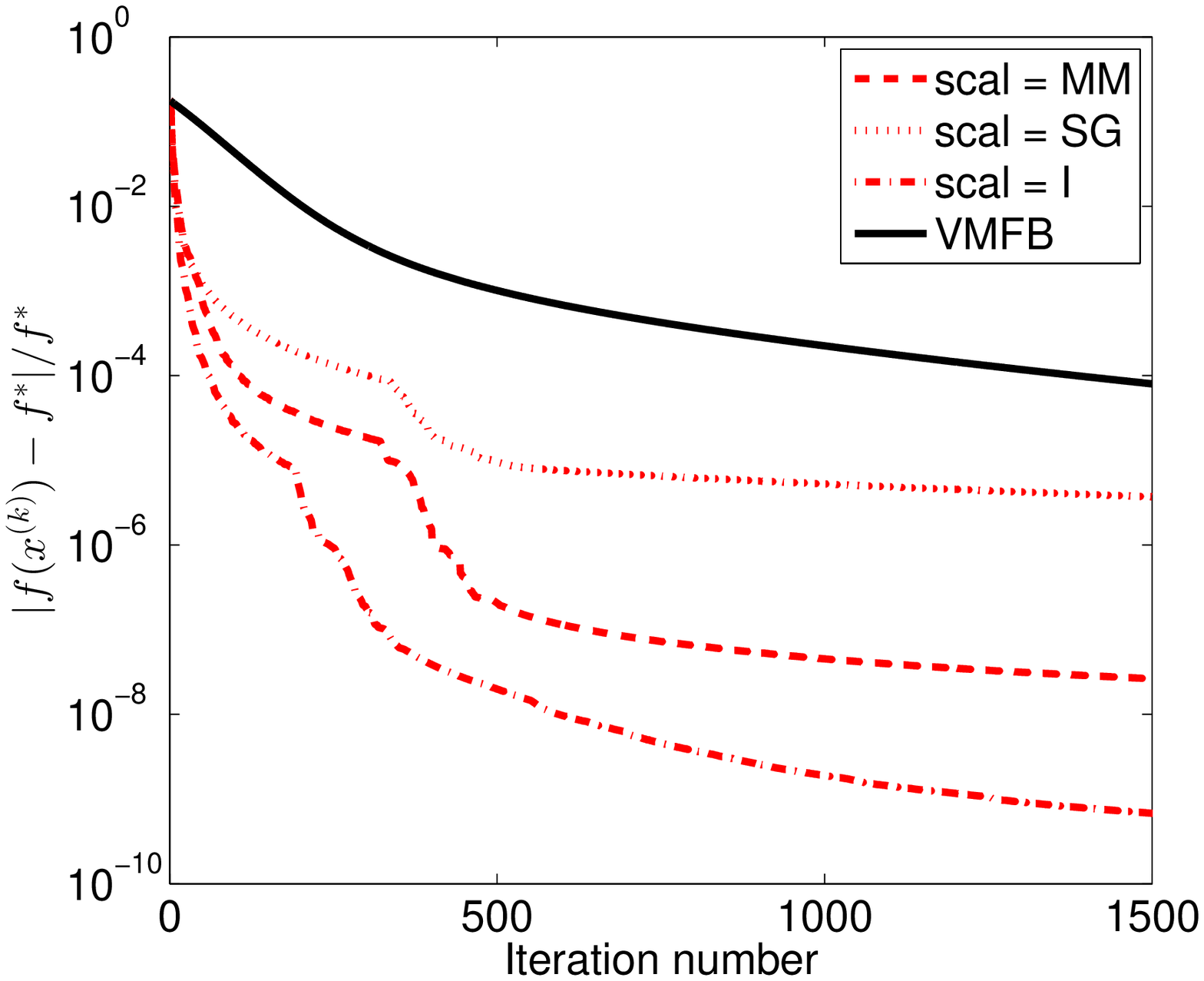}&\includegraphics[scale=\subFigScale]{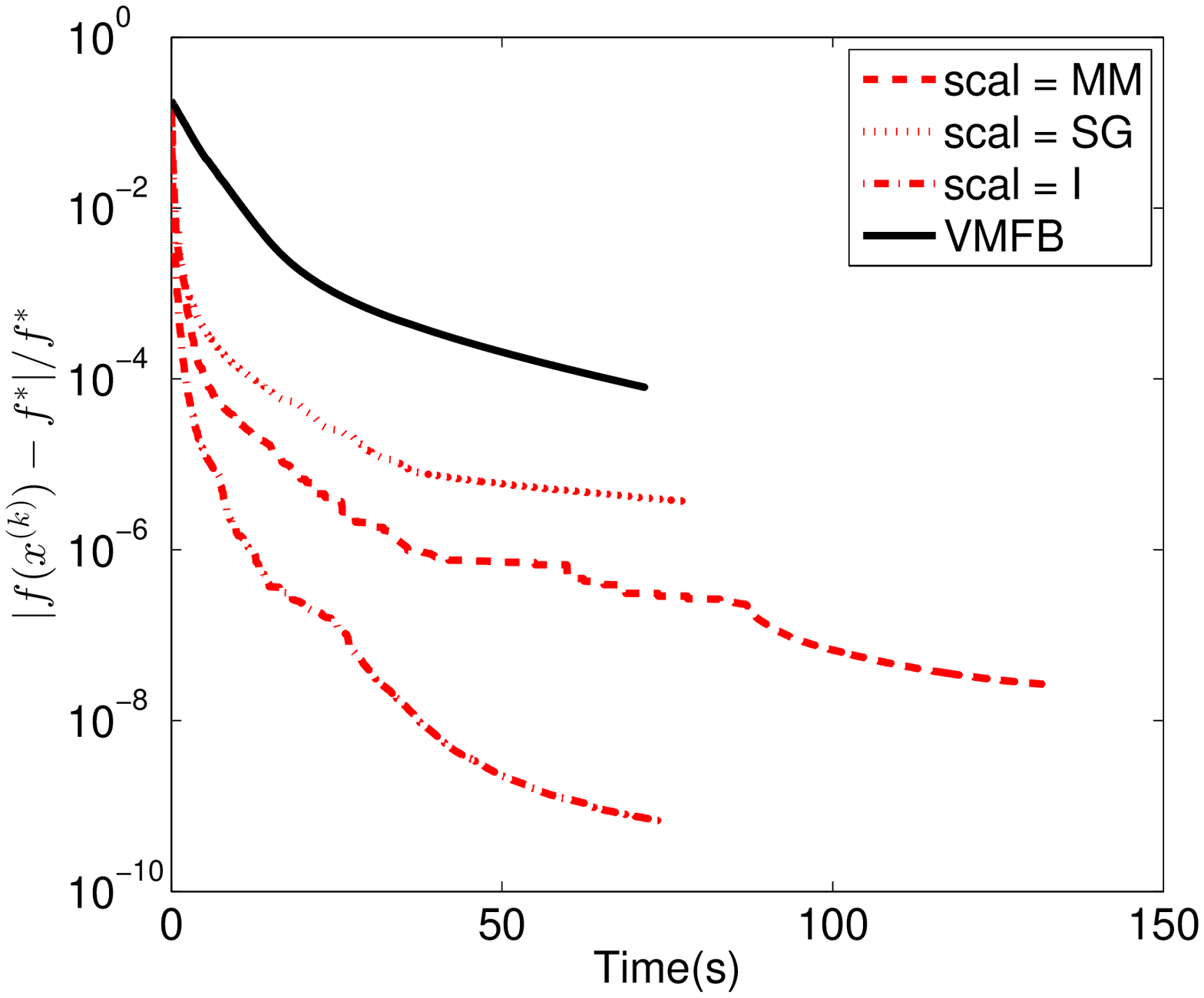}
\end{tabular}
\caption{Image deconvolution in presence of signal dependent Gaussian noise. Relative decrease of the objective function toward the minimum value with respect to the iteration number (left) and computational time in seconds (right).}
\label{fig:fig2}
\end{center}
\end{figure}

Figure \ref{fig:fig2} reports the relative decrease of the objective function with respect to the minimum value $f^*$ as a function of the iteration number and of the computational time. We can observe a faster decrease of the objective function for Algorithm \ref{algo:nuSGP}. In this case, the best performances are achieved by choosing $D_k = I_n$ but, in general, Algorithm \ref{algo:nuSGP} significantly benefits of the variable choice of the stepsize $\alpha_k$. The inner solver for computing an approximation of the proximal point requires about 2--3 iterations per outer iteration, except for the choice SG of the matrix $D_k$. \silviacorr{In all experiments the first option in \eqref{SGPmodified} never occurred.} The reconstructed image obtained with VMILAn is shown in the right panel of figure \ref{fig:fig1}.\\

\subsection{Linear diffusion based image compression}\label{sec_lindiffus}

The second image processing application we consider is the linear diffusion based image compression considered in \cite{Ochs-etal-2014} and consists in finding the optimal interpolation points for the compression procedure (see also \cite{Galic-etal-2008,Hoeltgen-etal-2013}). In particular, the problem can be described by means of the minimization problem
\begin{equation}\label{compr}
\min_{\c\in \R^n} \frac{1}{2} \|A^{-1}C\uz - \uz\|_2^2 + \lambda \|c\|_1 + \iota_{\mathcal{C}}(\c),
\end{equation}
where $\uz \in \R^n$ denotes the original image, $\c \in \R^n$ is the so-called inpainting mask and represents the unknown weights to be assigned to each pixel in the compression step, $C={\rm{diag}}(\c) \in \R^{n \times n}$ and $A=C + (C-I_n)L_n$, being $L_n \in \R^{n \times n}$ the Laplacian operator. As concerns the choice of the feasible set $\mathcal{C}$, although the natural choice would be the cartesian product $[0,1]^n$, in our experiments we observed that better results can be obtained by allowing the inpainting mask to assume values greater than 1, and therefore we chose $\mathcal{C}=[0,1.5]^n$.\\
The presence of the non-negativity constraint allows to apply VMILAn by including the term $\lambda \|c\|_1$ in the differentiable part $f_0$ and setting $f_1(\c) = \iota_{\mathcal{C}}(\c)$. The proximal operator of $f_1$ reduces to the projection over the set $\mathcal{C}$ and thus it is computed exactly. Moreover, $f$ is a KL function, being the sum of semi-algebraic functions, and $\nabla f_0$ is Lipschitz-continuous. Finally, the boundedness of the feasible set $\mathcal{C}$ guarantees the existence of a limit point. All these facts allow to apply Corollary \ref{cor:exactSGPconverge} and to state the convergence of the sequence to a stationary point of $f$.\\
Since the gradient of $f_0$ does not suggest any natural decomposition, we consider the nonscaled version of VMILAn by setting $D_k=I_n$ for all $k$. As concerns the steplength parameter $\alpha_k$, we used the same strategy
described in the previous section by replacing \eqref{redgrad} with
\begin{equation*}
\widetilde{g}^{(k)}_j = \cases{0&if $c^{(k)}_j \in \{0,1.5\}$, \\ \left[\nabla f_0(\c^{(k)})\right]_j&if $c^{(k)}_j \in (0,1.5)$}
\end{equation*}
and setting $\alpha_{\max} = 10^5$.\\ 
We compare VMILAn with the iPiano algorithm \cite[Algorithm 4]{Ochs-etal-2014}, which is a forward--backward method with extrapolation whose sequence generated converges to a critical point of \eqref{compr} thanks to the KL property of the objective function. Unlike the choice made for VMILAn, here we followed the implementation of the authors and left the term $\lambda \|c\|_1$ in the $f_1$ part of the objective function (we tried also the other splitting but we always obtained worse results). All the other parameters defining iPiano have been chosen as suggested in \cite{Ochs-etal-2014}. The test problems are the same used in \cite[\S 5.2.2]{Ochs-etal-2014} and named ``trui'', ``peppers'' and  ``walter'' (see figure \ref{fig:fig3}). In table \ref{table} we report the iteration numbers performed by the two methods together with the corresponding values of the objective function, density and mean squared error (MSE) computed by
\begin{equation*}
{\rm{MSE}}(\u,\uz)=\frac{1}{n}\sum_{i=1}^n (u_i - u_i^0)^2,
\end{equation*}
where $\u=A^{-1}C\uz$ is the reconstructed image. Moreover, since in this case it seems that the two algorithms do not converge to the same minima, in figure \ref{fig:fig4} we do not plot the relative distance between the objective function and the minimum but we show the decrease of the objective function with respect to the iteration number and the computational time in seconds. The behaviour of the steplength $\alpha_k$ and the linesearch parameter $\lambda_k$ is also shown in the right column of figure \ref{fig:fig4}. Finally, in the right column of figure \ref{fig:fig3} the reconstructions obtained with VMILAn are given.

\begin{table}
\centering
\begin{tabular}{cccccc}
\hline
Test image & Algorithm & Iterations & Obj. func. & Density & MSE \\
\hline
\multirow{2}{*}{trui}    & iPiano & 1000 & 21.58 & 4.97\% & 17.27 \\
                         & VMILAn &  599 & 21.50 & 4.80\% & 17.95 \\
\hline
\multirow{2}{*}{peppers} & iPiano & 1000 & 23.10 & 5.95\% & 19.64 \\
                         & VMILAn &  655 & 23.01 & 5.81\% & 19.99 \\
\hline
\multirow{2}{*}{walter}  & iPiano & 1000 & 10.32 & 5.10\% & 8.27 \\
                         & VMILAn &  699 & 10.23 & 4.66\% & 8.55 \\
\hline
\end{tabular}
\caption{Summary of two algorithms for three test images.}
\label{table}
\end{table}

\def\subFigScale{0.3}
\begin{figure}
\hspace{-10mm}
\begin{center}
\begin{tabular}{ccc}
\includegraphics[scale=\subFigScale]{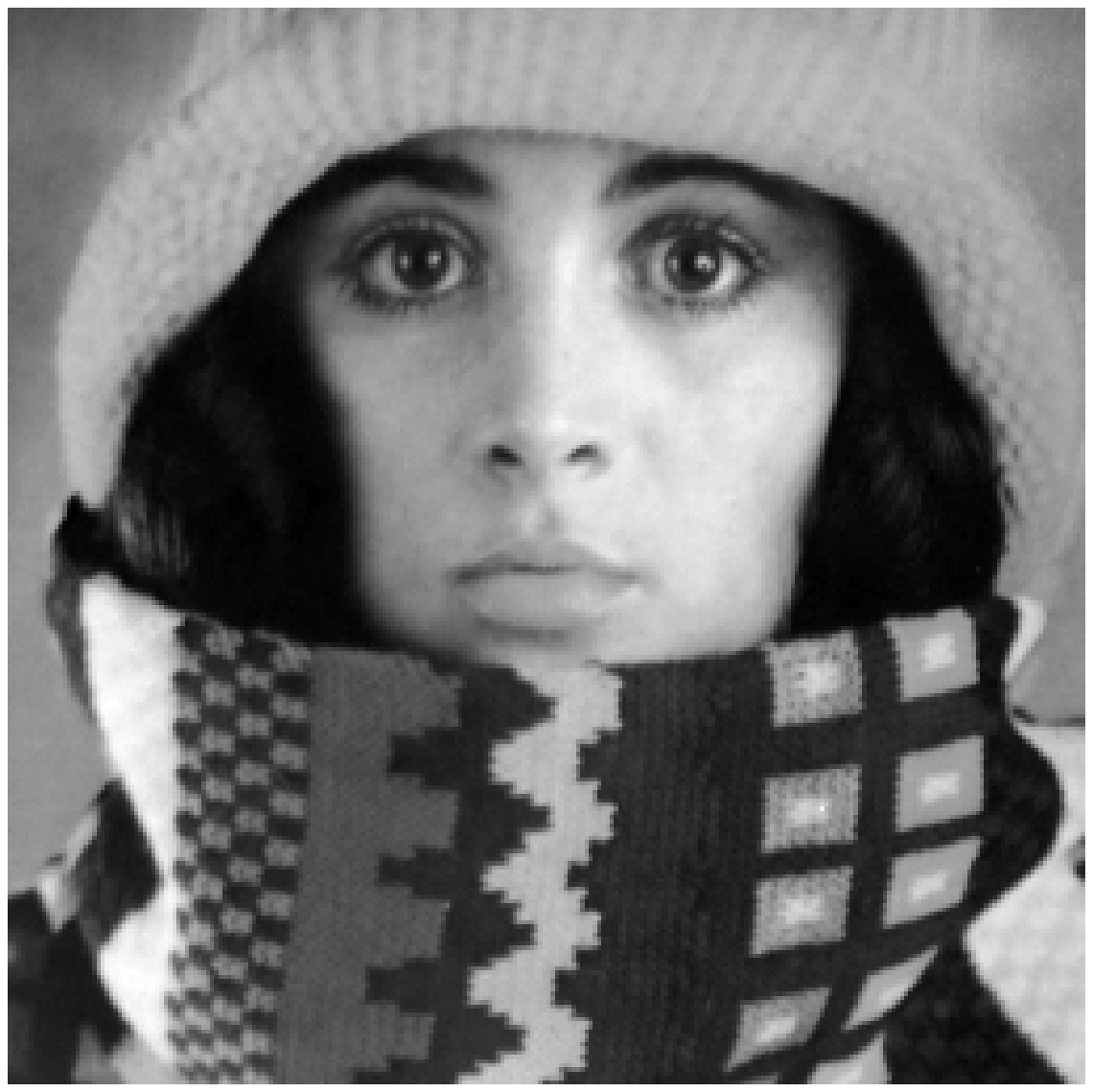}&\includegraphics[scale=\subFigScale]{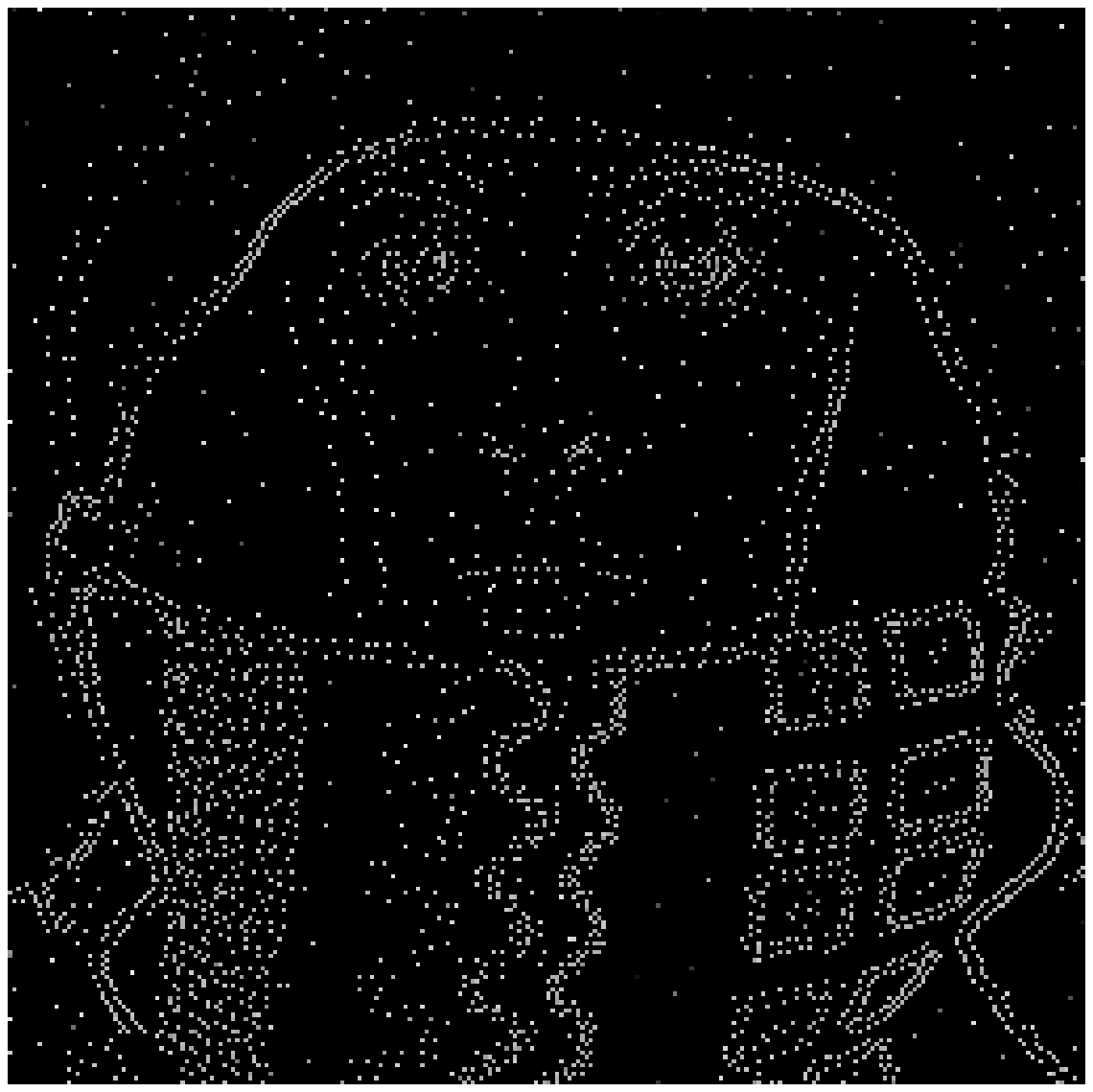}&\includegraphics[scale=\subFigScale]{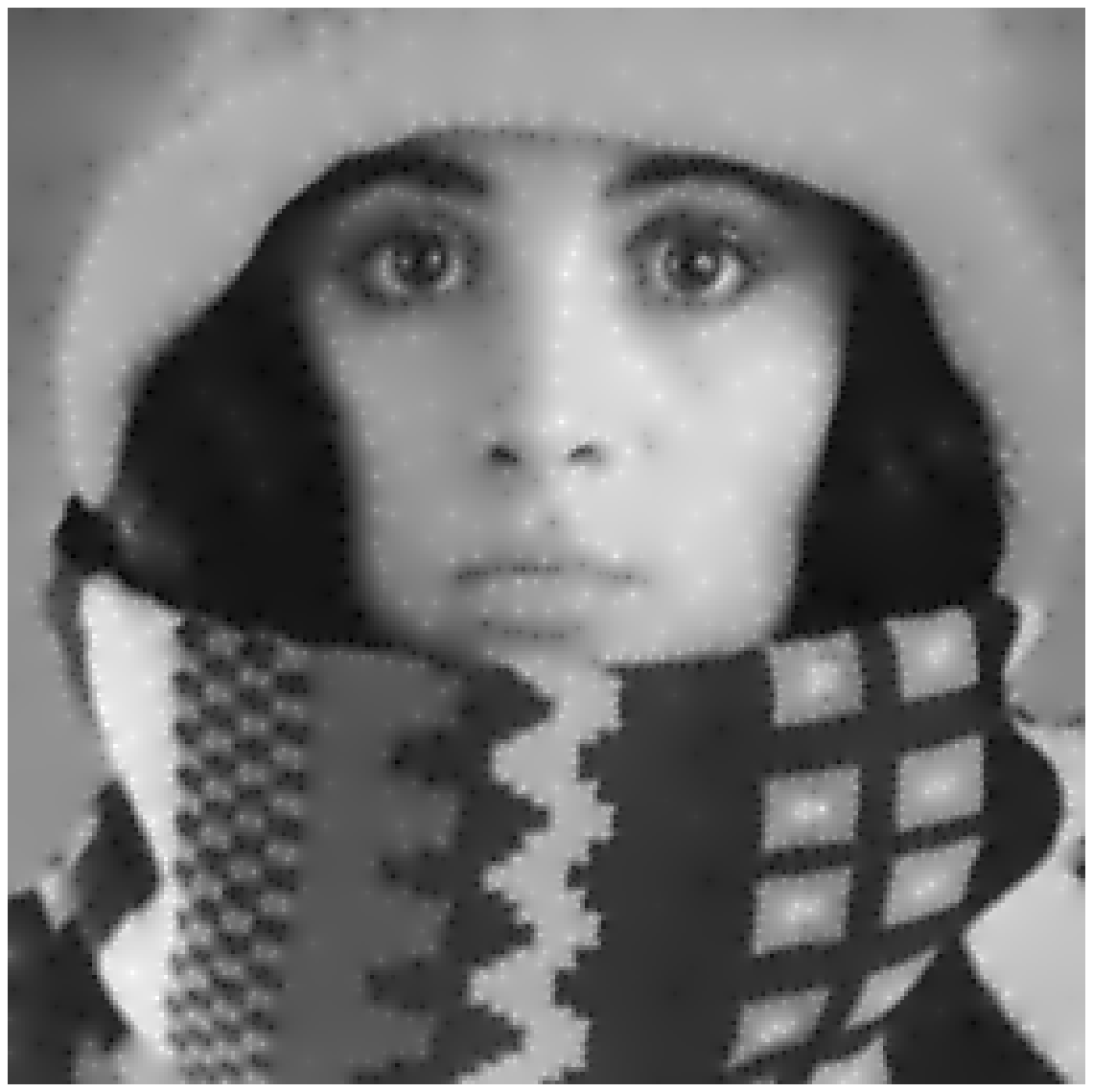}\\
\includegraphics[scale=\subFigScale]{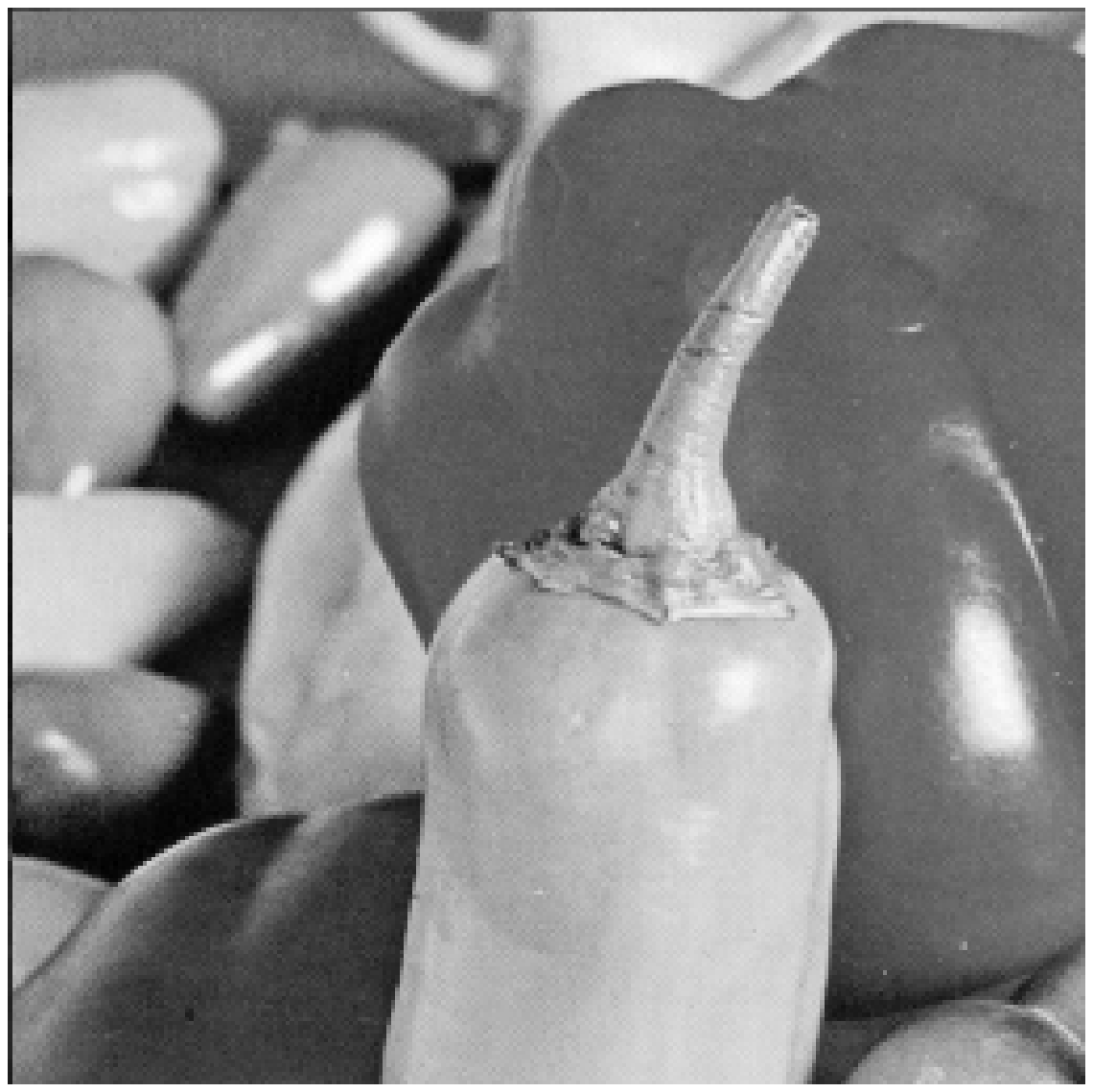}&\includegraphics[scale=\subFigScale]{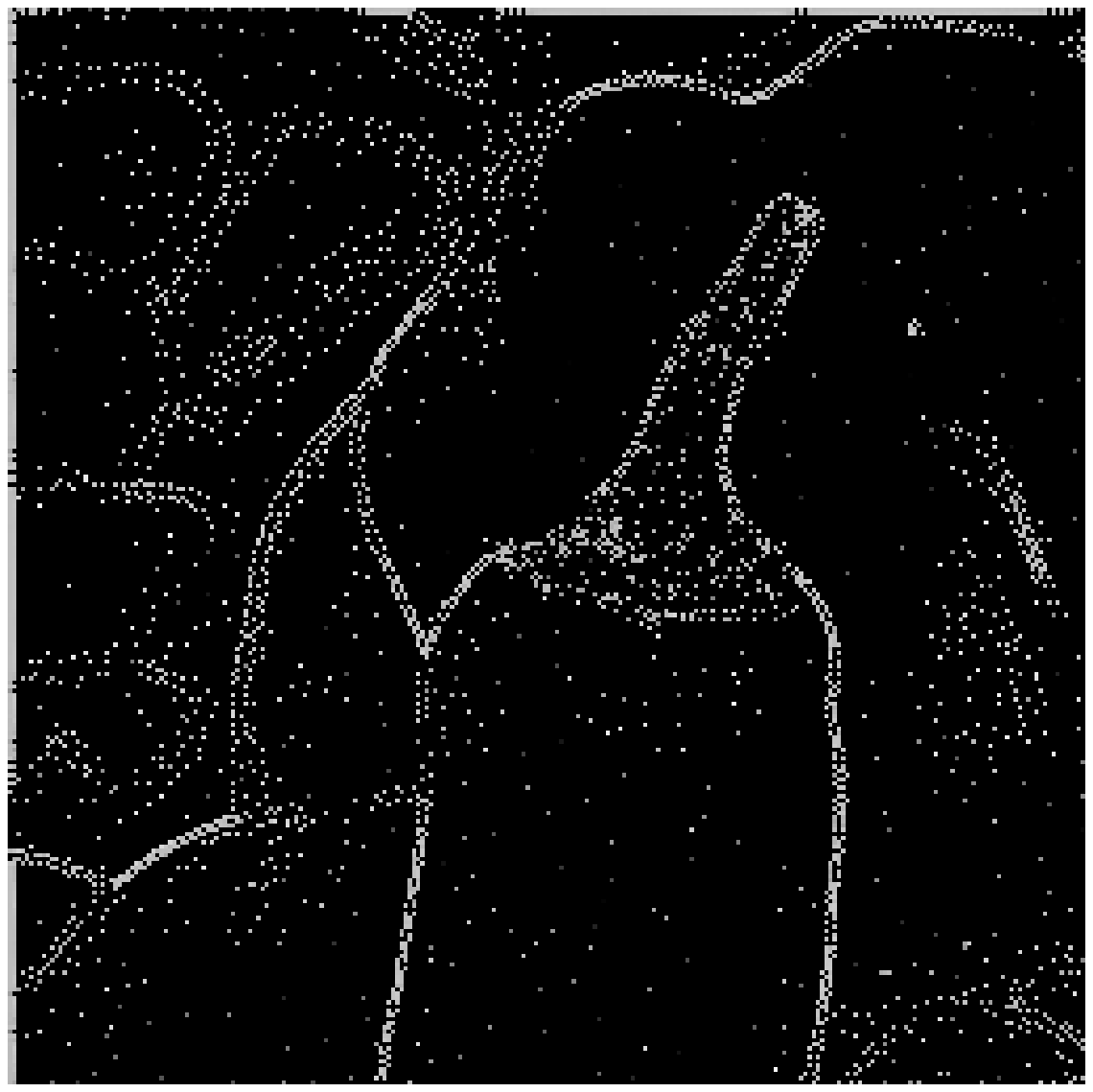}&\includegraphics[scale=\subFigScale]{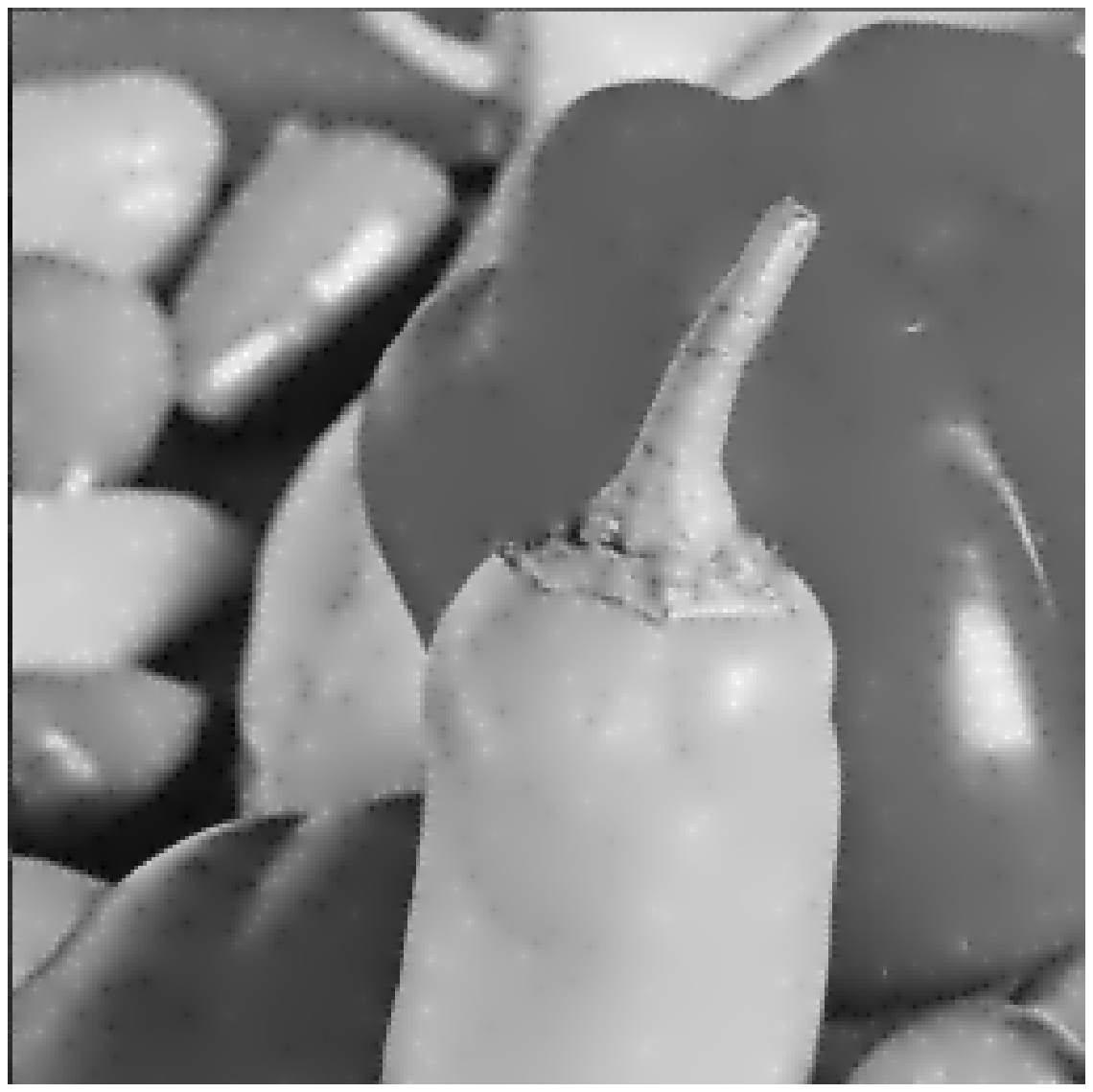}\\
\includegraphics[scale=\subFigScale]{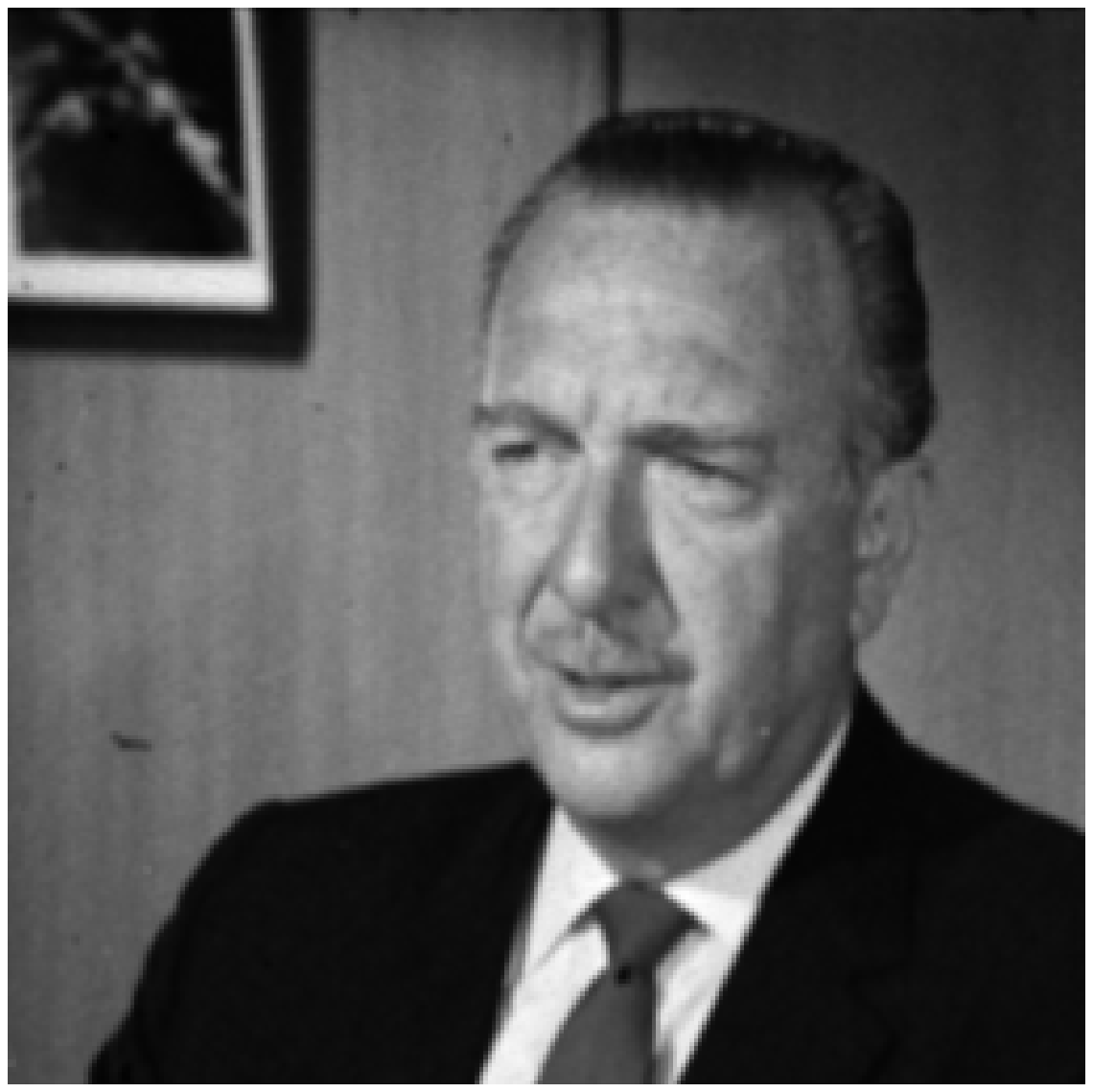}&\includegraphics[scale=\subFigScale]{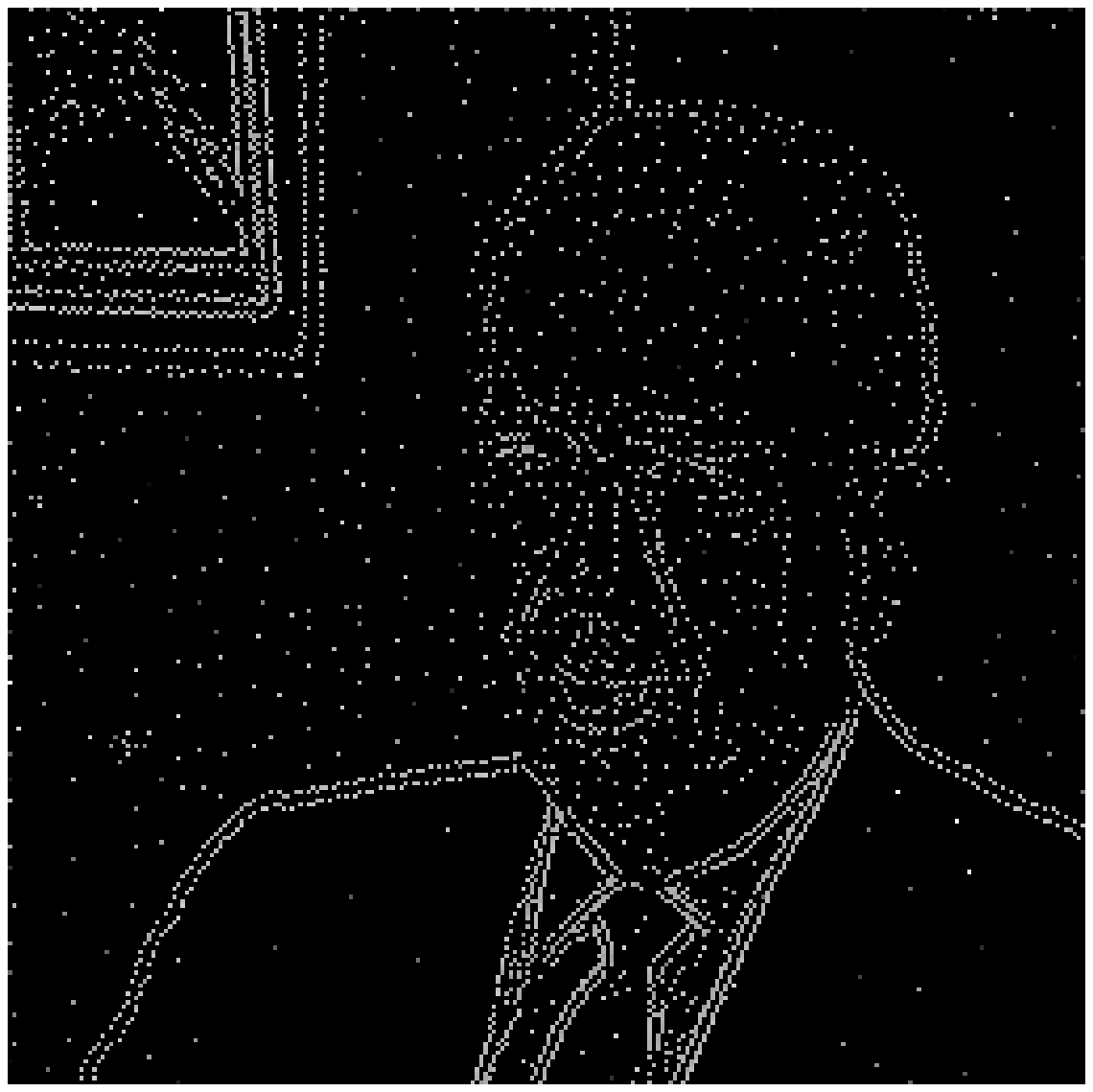}&\includegraphics[scale=\subFigScale]{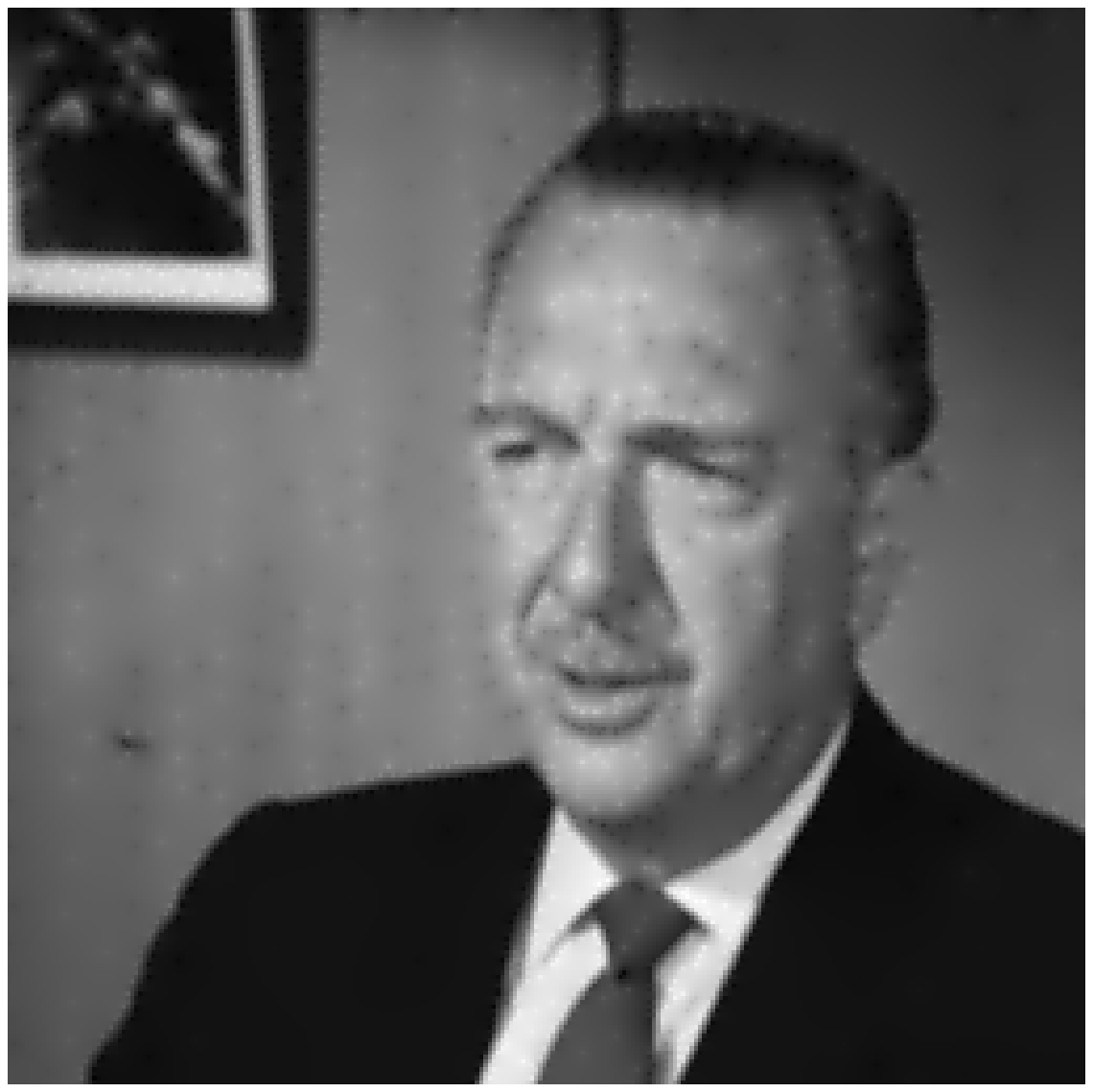}
\end{tabular}
\caption{Trui (top row), peppers (central row) and walter (bottom row) datasets. Original image (left), inpainting mask (middle) and VMILAn reconstruction (right).}\label{fig:fig3}
\end{center}
\end{figure}

\begin{figure}
\begin{center}
\makebox[\linewidth]{
\begin{tabular}{ccc}
\includegraphics[scale=\subFigScale]{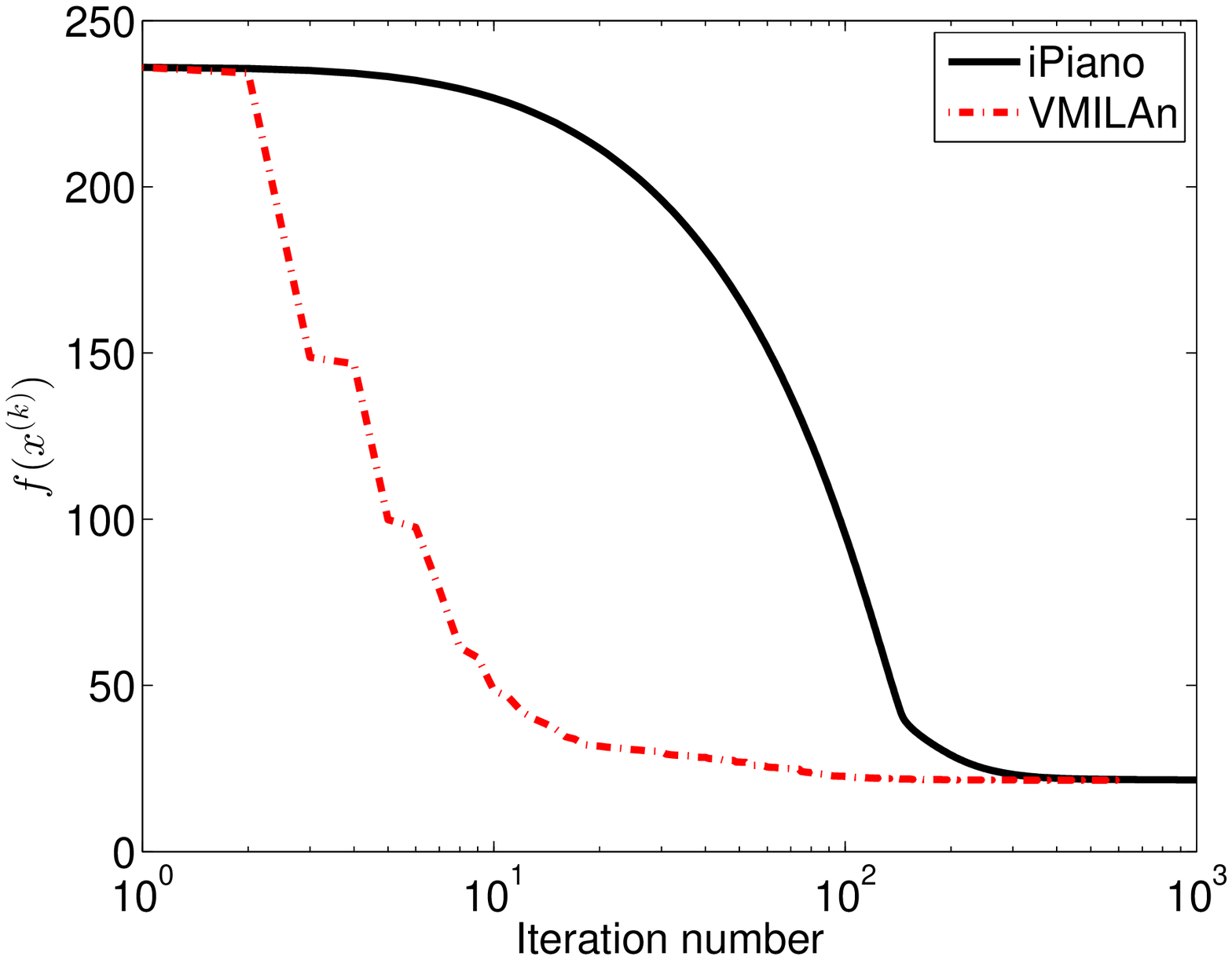}&\includegraphics[scale=\subFigScale]{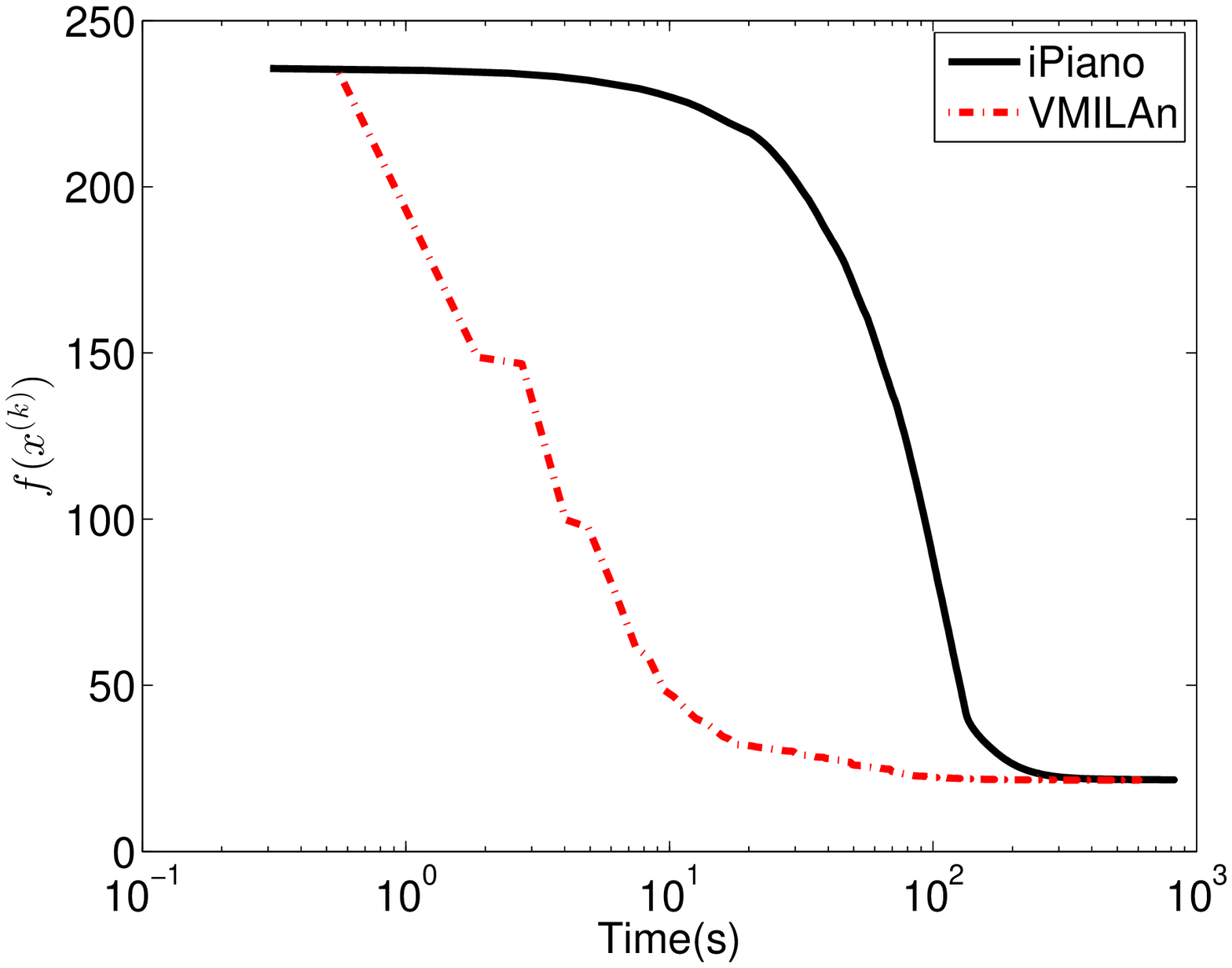}&\includegraphics[scale=0.41]{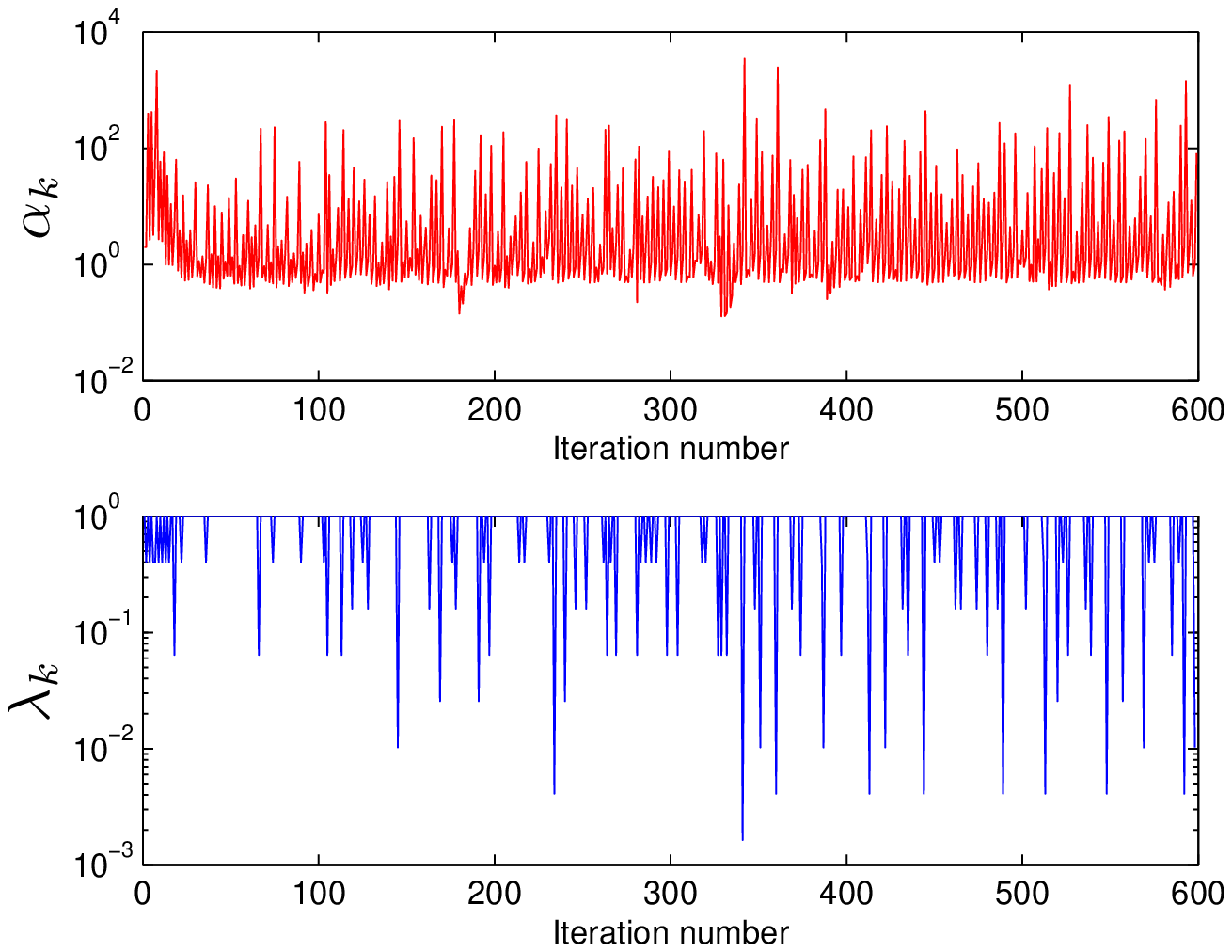}\\
\includegraphics[scale=\subFigScale]{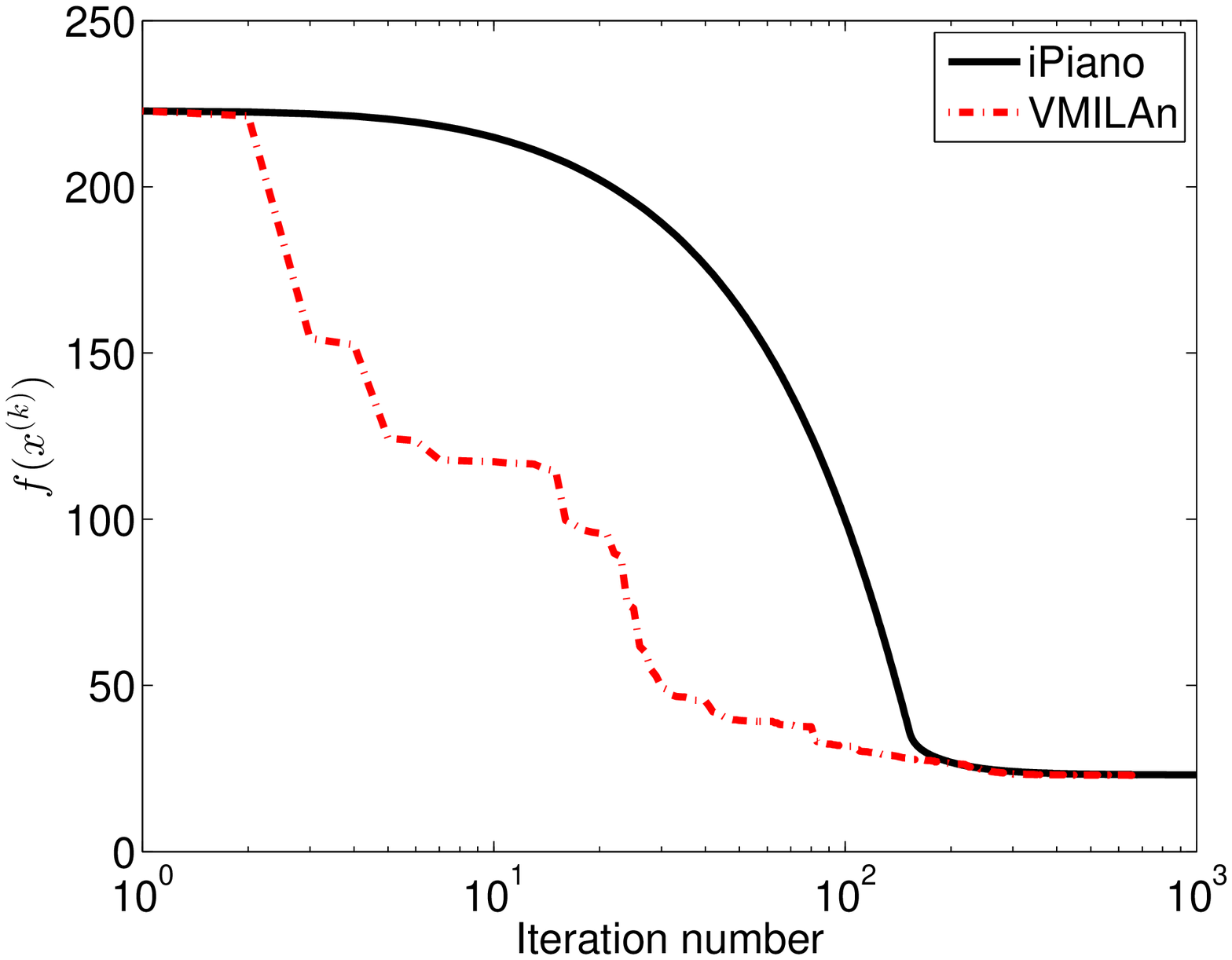}&\includegraphics[scale=\subFigScale]{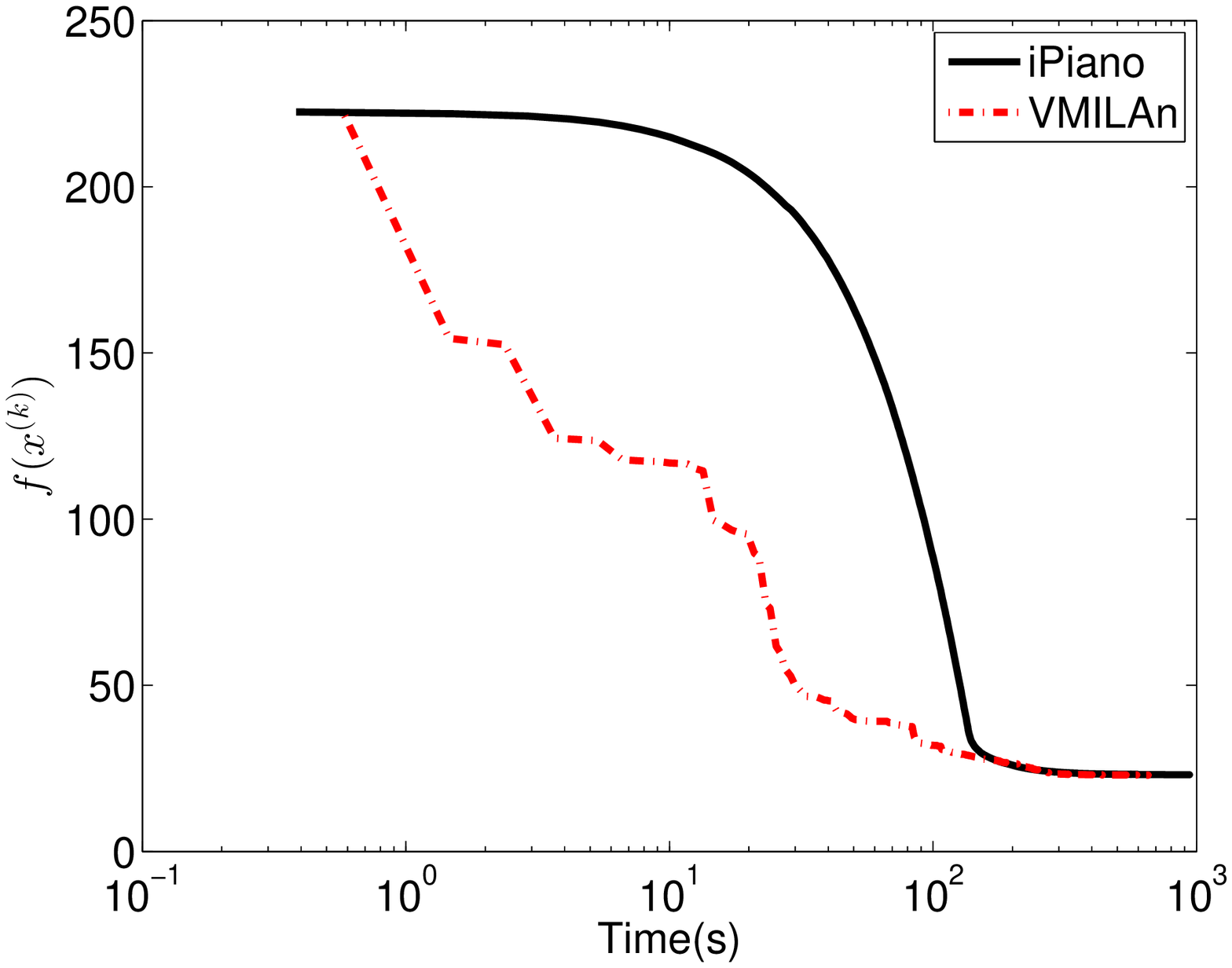}&\includegraphics[scale=0.41]{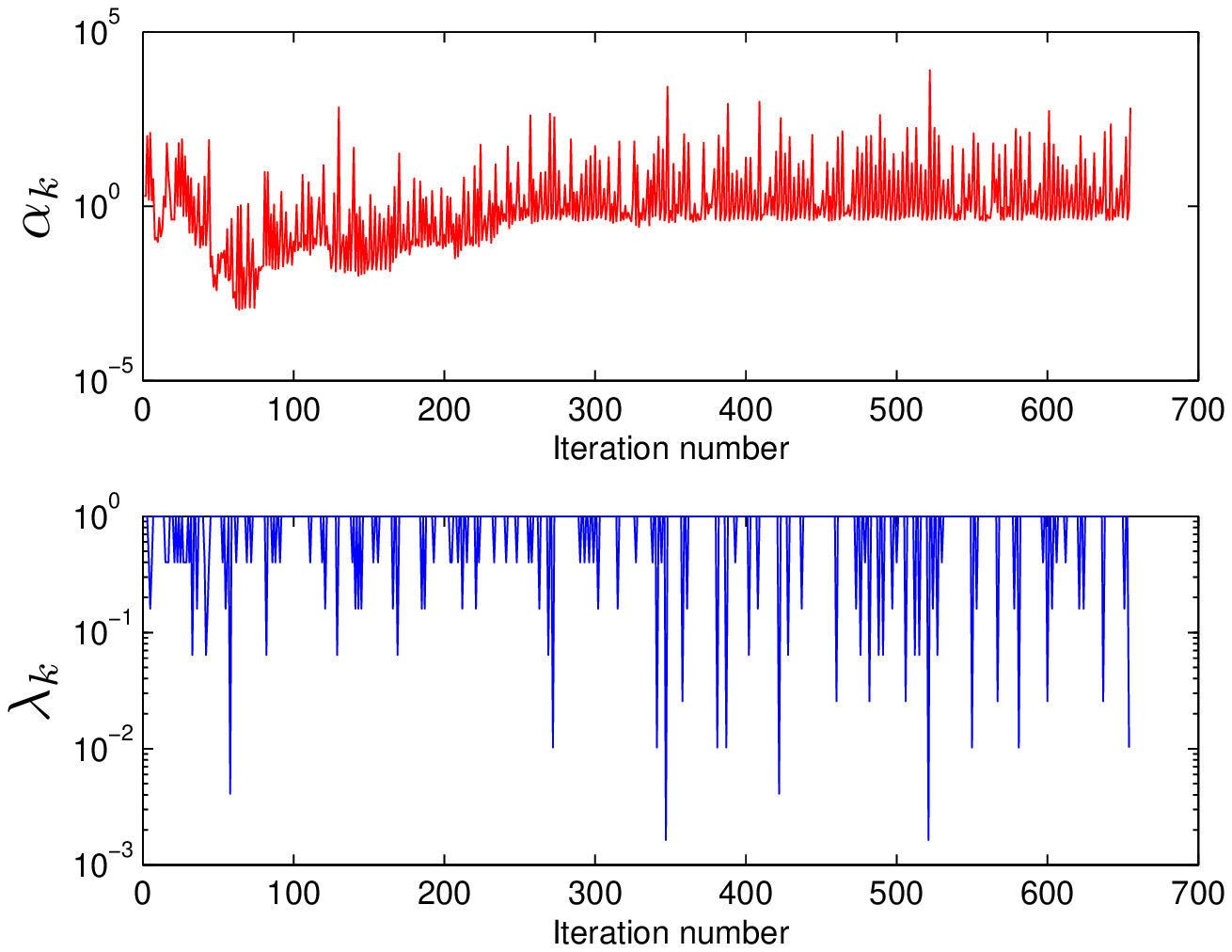}\\
\includegraphics[scale=\subFigScale]{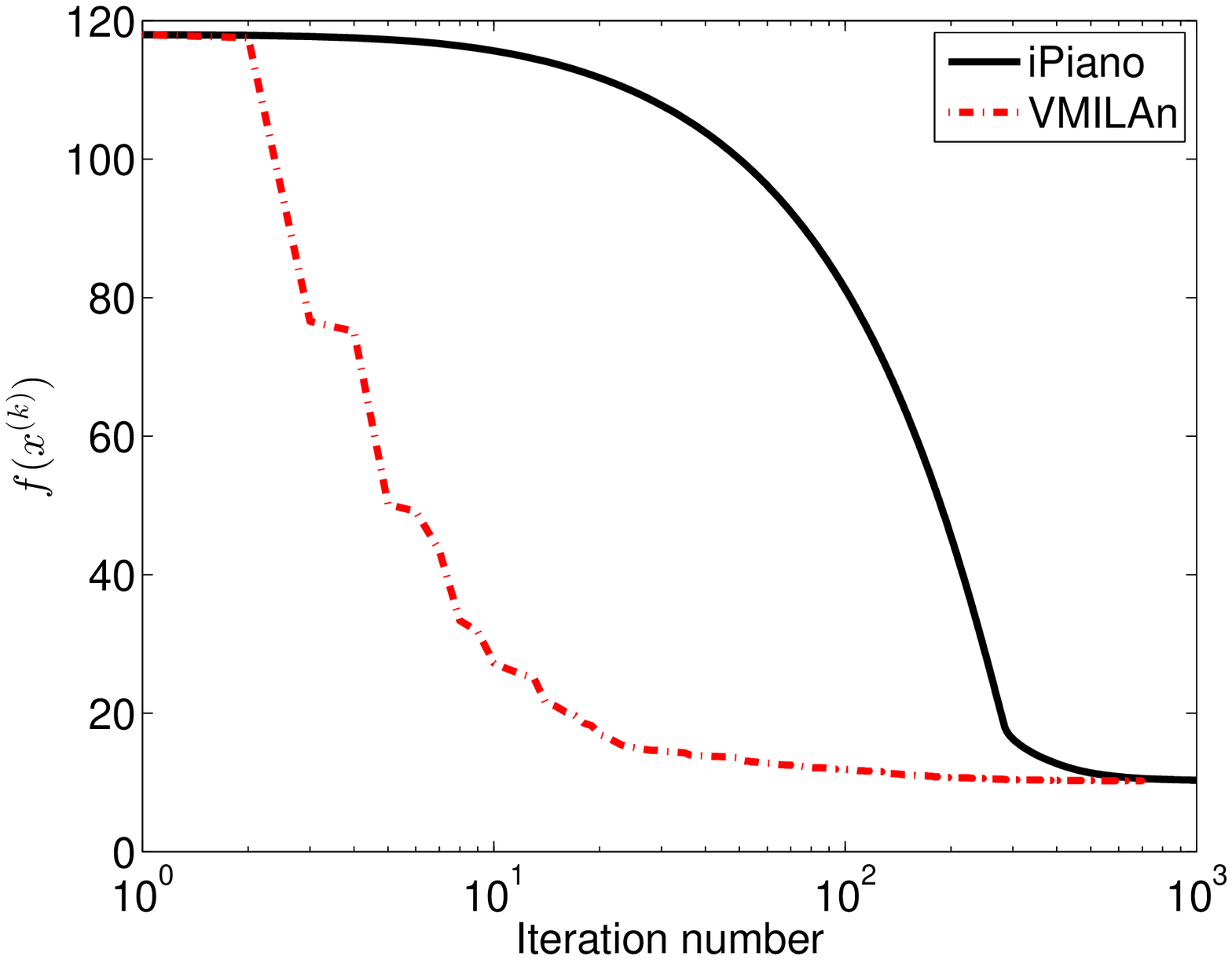}&\includegraphics[scale=\subFigScale]{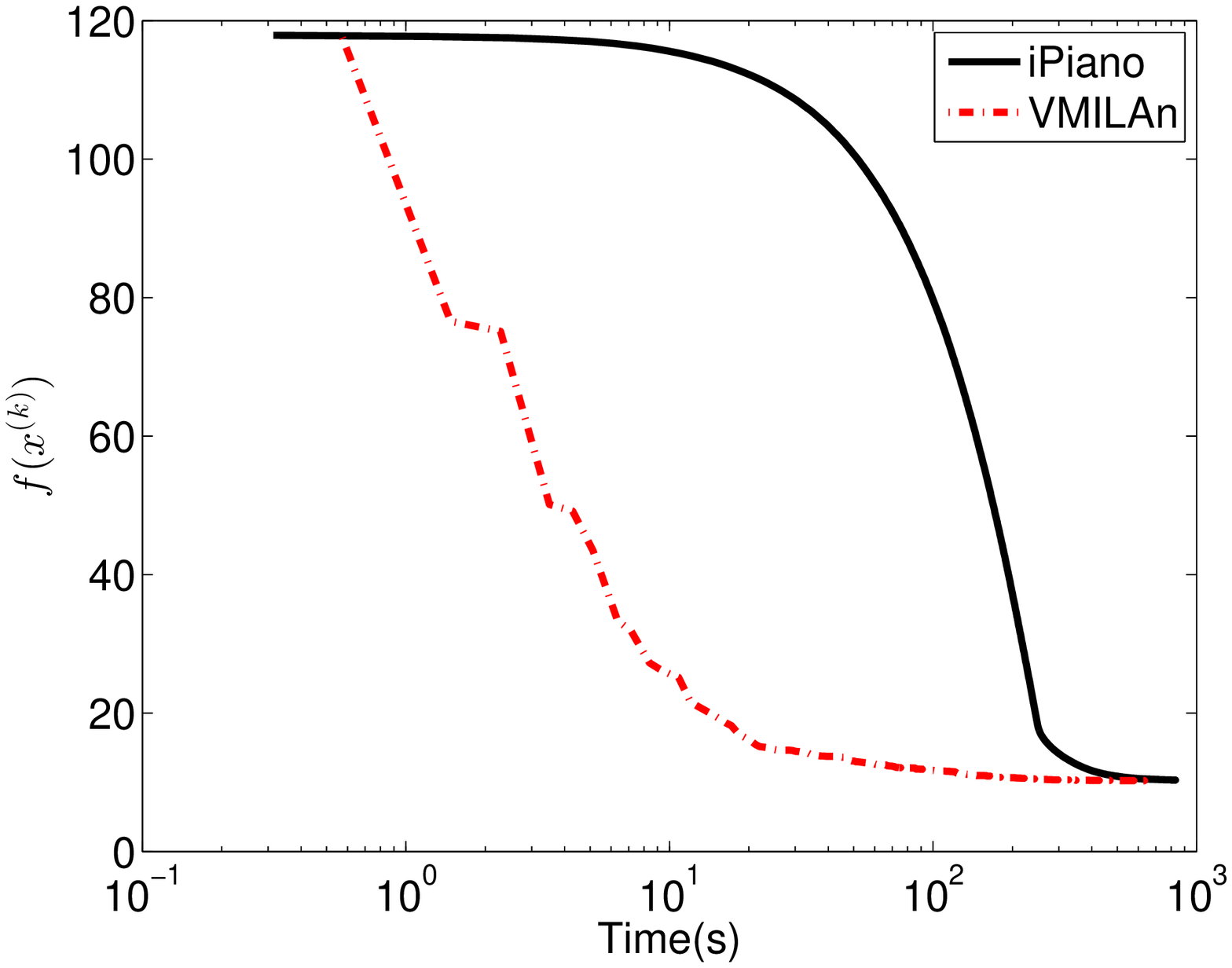}&\includegraphics[scale=0.41]{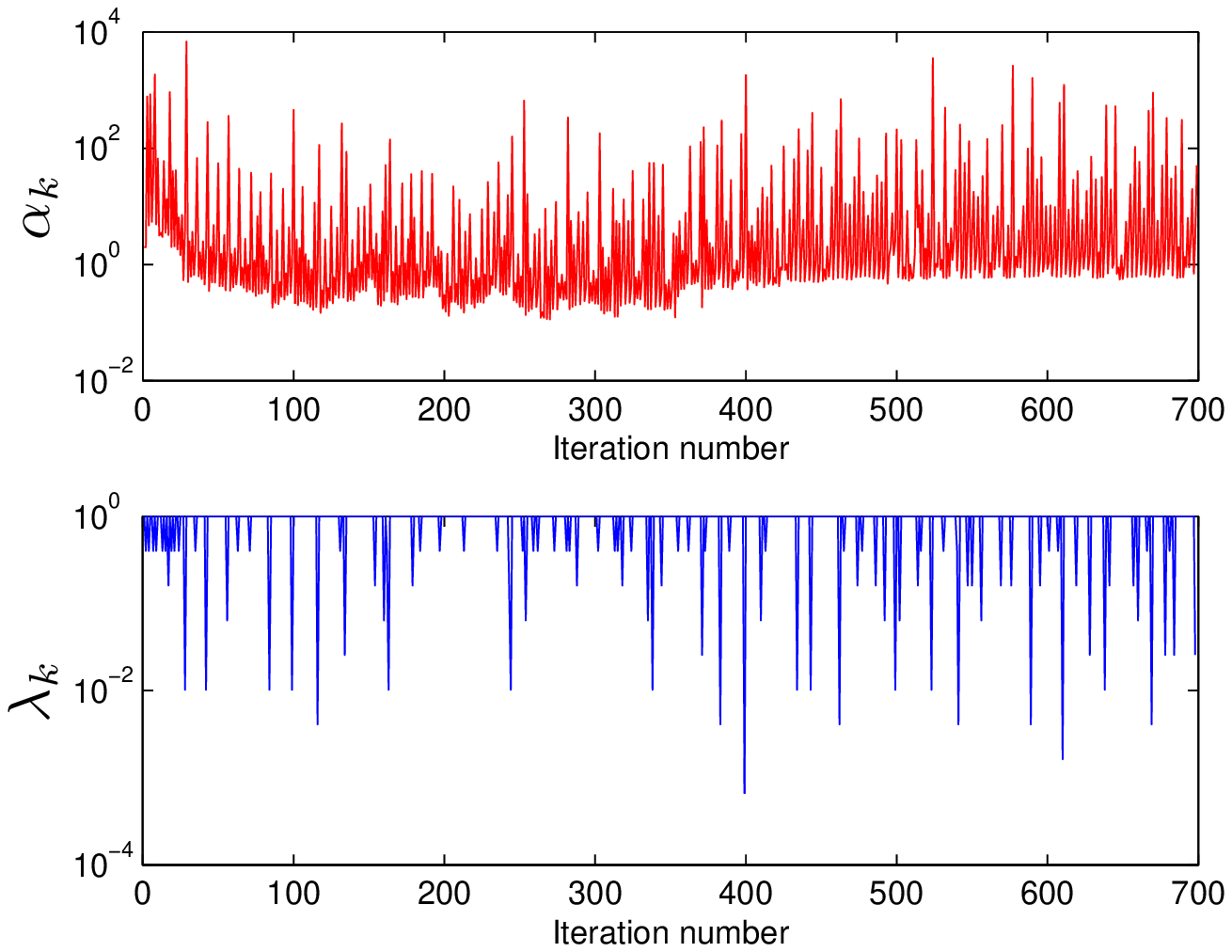}
\end{tabular}}
\caption{Linear diffusion based image compression for the trui (top row), peppers (central row) and walter (bottom row) datasets. Decrease of the objective function with respect to the iteration number (left) and computational time in seconds (center), and behaviour of the parameters $\alpha_k$ and $\lambda_k$ with respect to the iteration number (right) represented by the red and blue plots, respectively.}\label{fig:fig4}
\end{center}
\end{figure}

As remarked in the previous numerical test, also in this application VMILAn seems to be competitive if compared to other forward-backward approaches, since it is able to provide comparable reconstructions by performing a lower number of iterations and allowing a reduction of the computational time. \silviacorr{In all the experiments described in this section, the first option in \eqref{SGPmodified} never occurred.}

\subsection{Image deblurring in presence of Cauchy noise}

As a final test, we take into account the problem of recovering a blurred image corrupted by Cauchy noise. In \cite{Sciacchitano-etal-2015} the authors propose a novel variational model aimed to face Cauchy noise image restoration based on total variation regularization. More in detail, they suppose the degraded image $g\in \R^n$ can be written as $g = Hu + v$, where $u\in \R^n$ is the true object, $H\in\mathbb{R}^{n\times n}$ is the discretization of the blurring operator and $v\in \R^n$ represents the random noise which models a Cauchy distribution corresponding to a density of the form
\begin{equation*}
f(v) = \frac{1}{\pi}\frac{\gamma}{\gamma^2 + v^2}, \qquad \gamma >0.
\end{equation*}
The discrete version of the optimization problem they suggest can be formulated as follows
\begin{equation}\label{eq:CauchyNoise_minprob}
 \min_{x\in\mathbb{R}^n} \frac{\lambda}{2} \sum_{i=1}^n \log\bigl(\gamma^2 + ((Hx)_i - g_i)^2\bigr) + \sum_{i=1}^n \| \nabla_i x \|,
\end{equation}
where $\lambda$ is the regularization parameter. We decide to force the solution of being non-negative and therefore we add to the objective function in \eqref{eq:CauchyNoise_minprob} the indicator function of the non-negative orthant. In these settings, the nondifferentiable part of the function to minimize becomes as in \eqref{eq:TV+Ind} (with $\rho =1$), while $f_0$ reduces to the logarithmic discrepancy.\\
We consider two datasets borrowed by \cite[Section 5.2]{Sciacchitano-etal-2015}. In particular the operator $H$ is associated to a Gaussian blur with a window size $9\times 9$ and standard deviation equal to 1, while $\gamma$ has been set equal to $0.02$. We report the true images and the distorted ones in figure \ref{fig:fig7}.
\begin{figure}[htbp!]
\begin{center}
\begin{tabular}{cccc}
\includegraphics[scale=0.2]{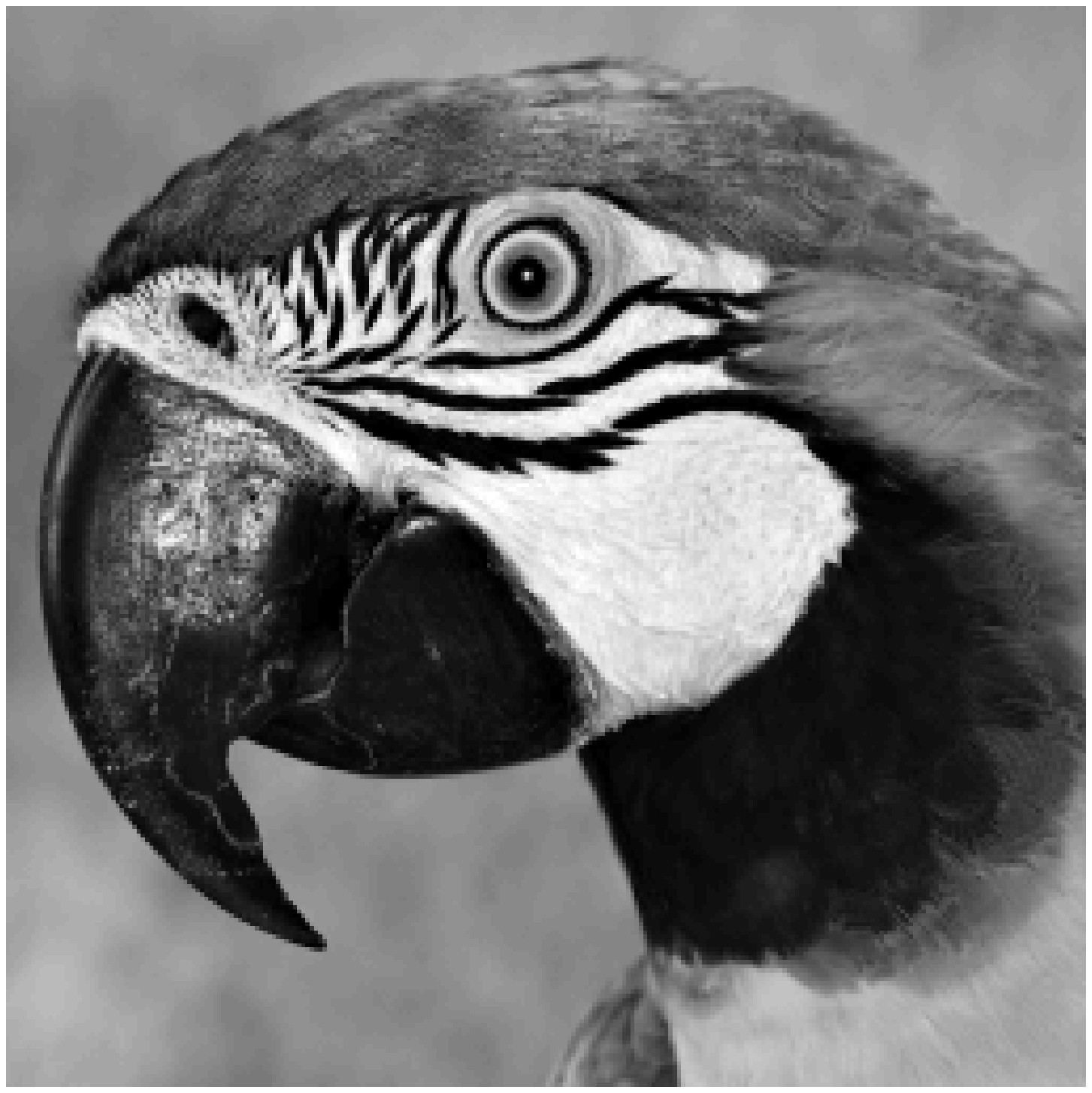}&\includegraphics[scale=0.2]{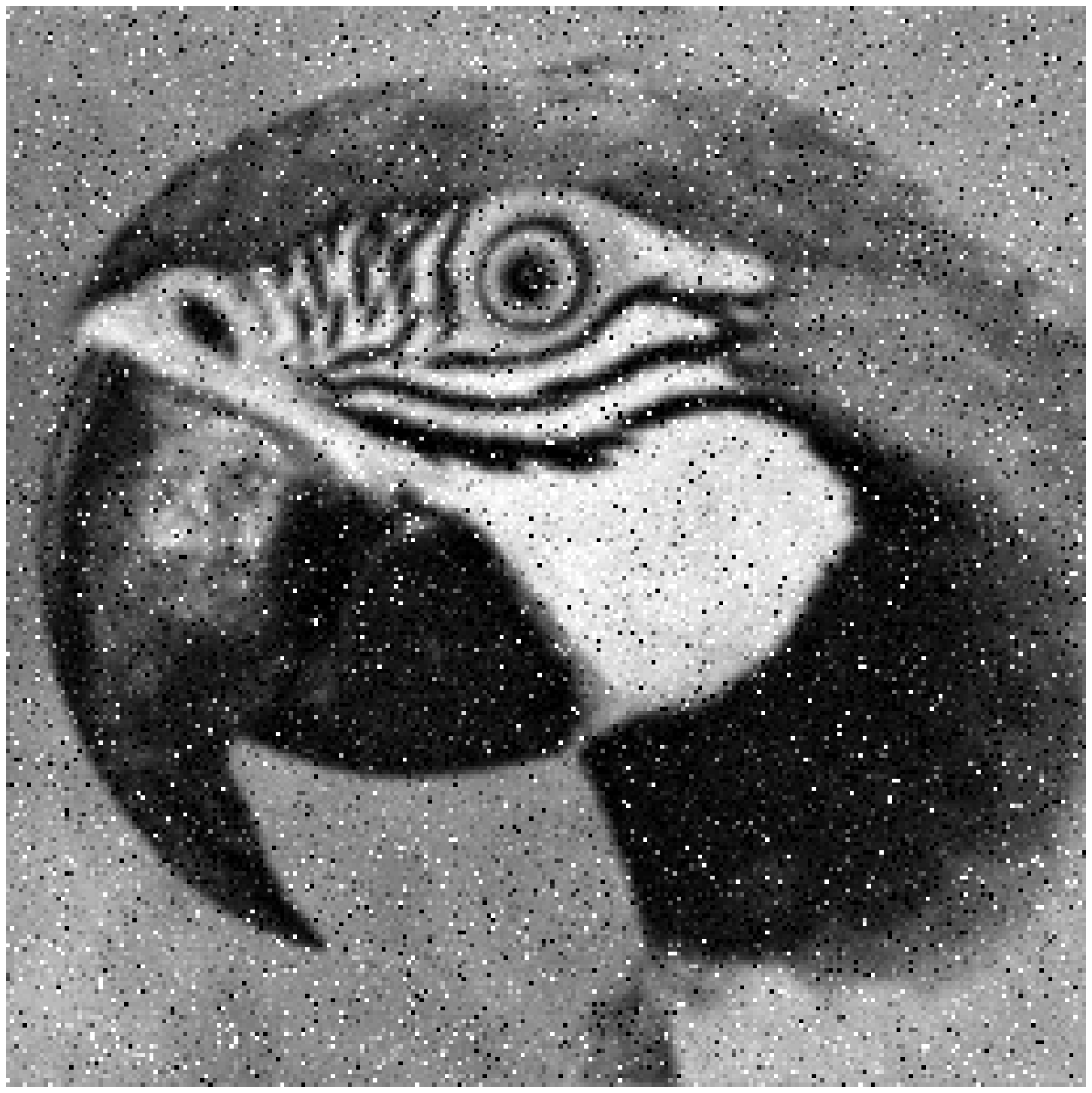}&\includegraphics[scale=0.25]{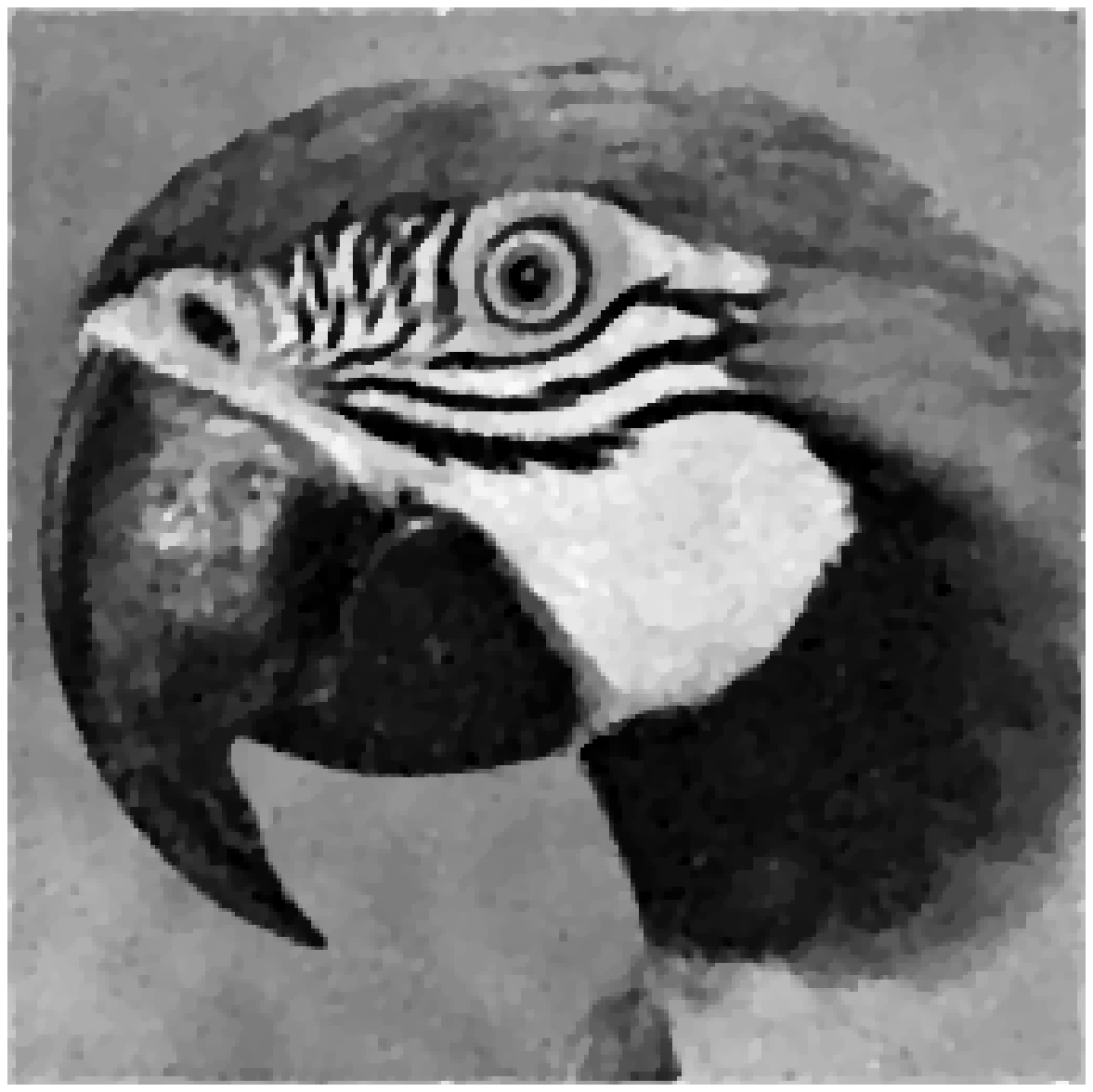}\\%&\includegraphics[scale=0.25]{}\\
\includegraphics[scale=0.2]{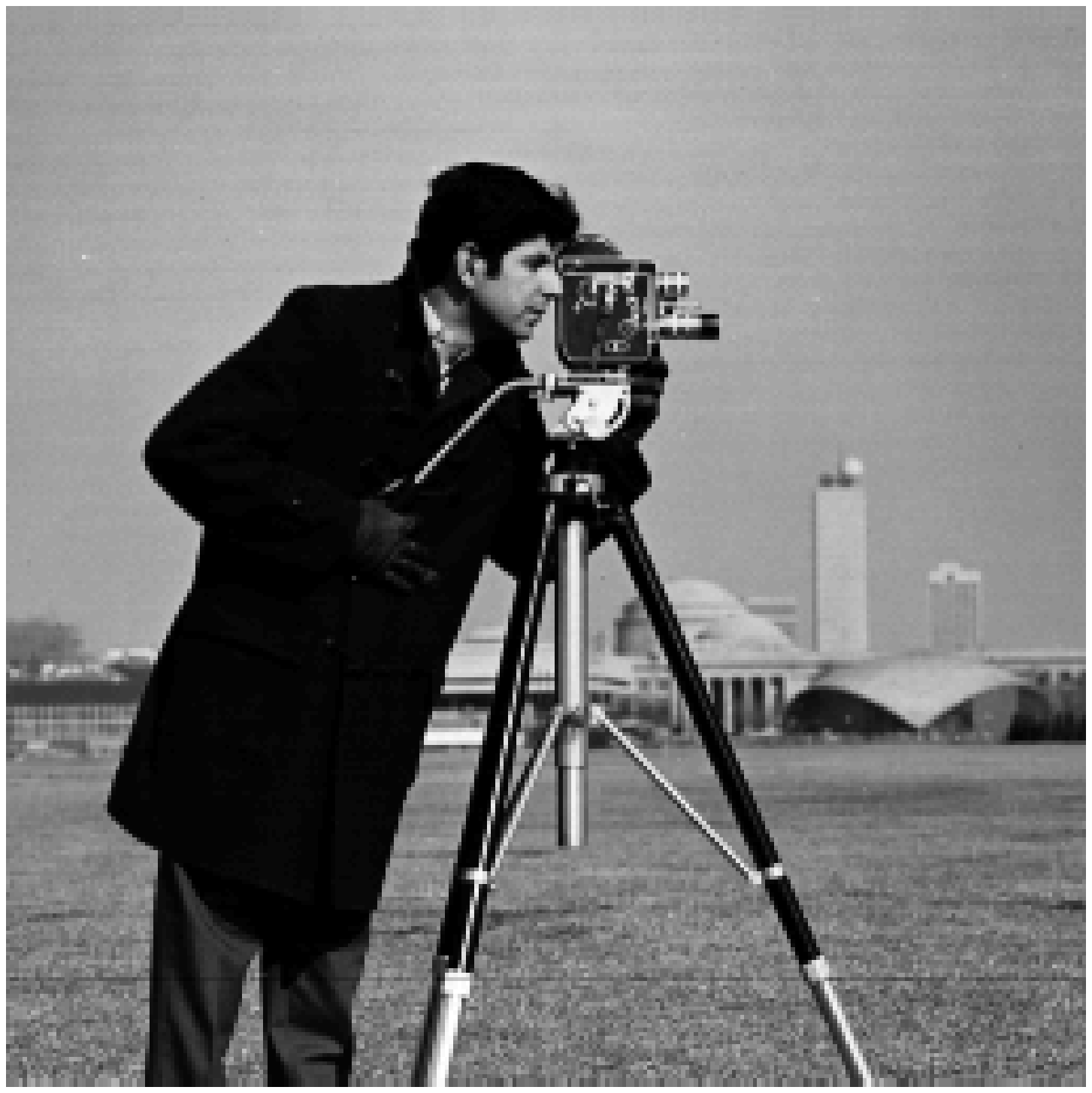}&\includegraphics[scale=0.2]{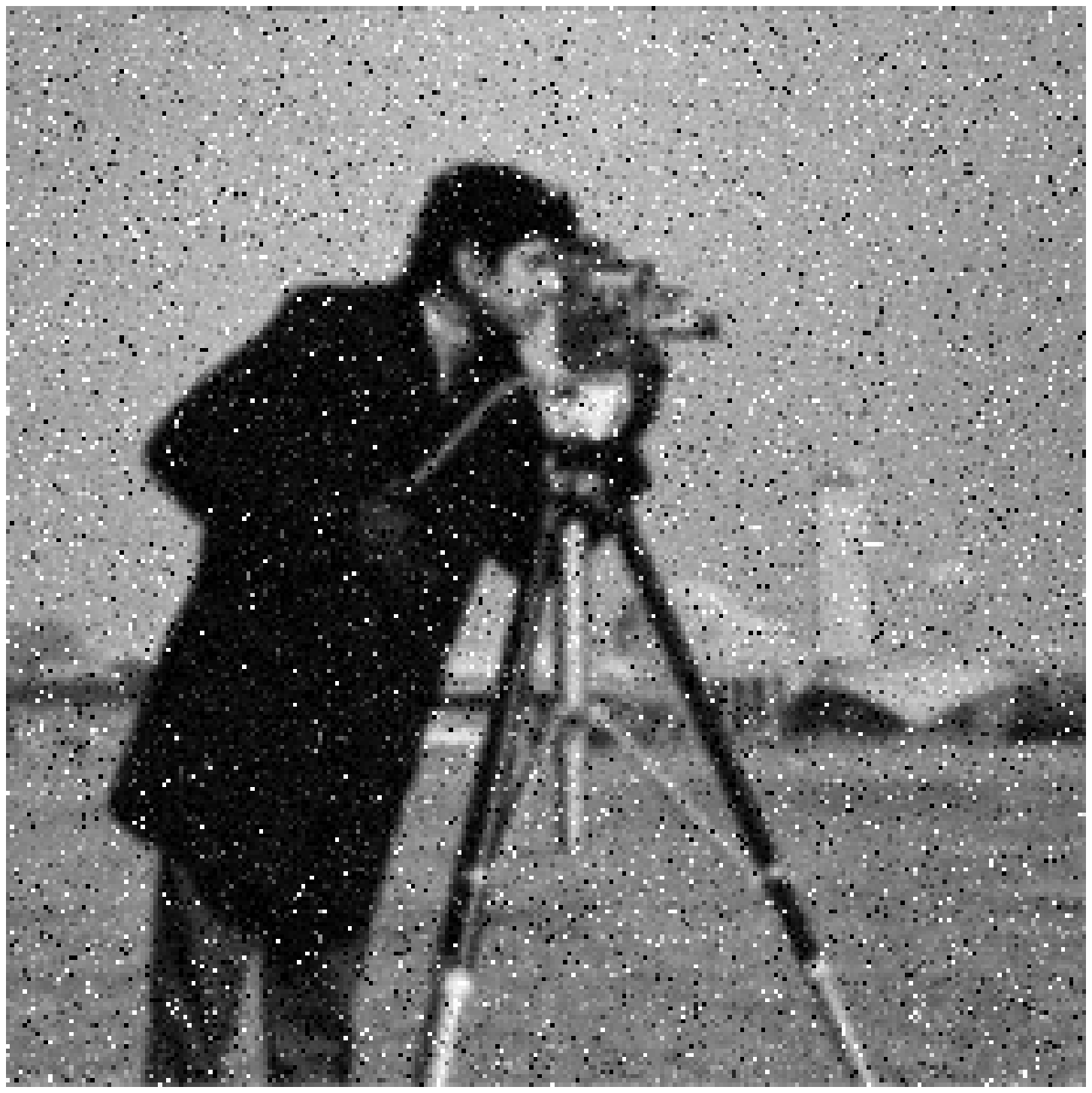}&\includegraphics[scale=0.25]{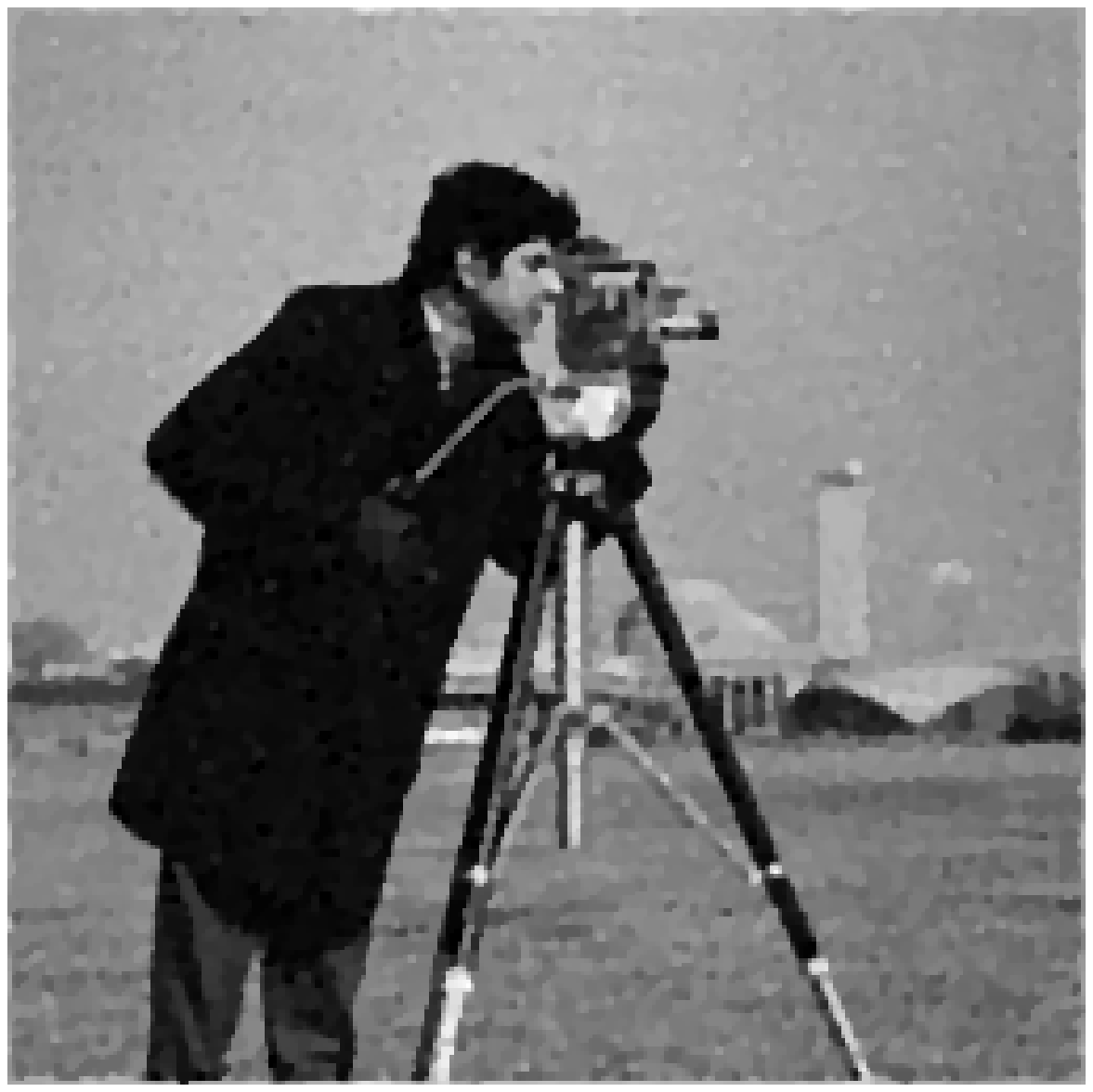}\\%&\includegraphics[scale=0.25]{}
\end{tabular}
\caption{Cauchy noise image deblurring datasets: original objects (left), blurred and noisy images (middle) and VMILAn reconstructions (right).}\label{fig:fig7}
\end{center}
\end{figure}
The regularization parameter $\lambda$ has been fixed equal to $0.35$. We applied VMILAn by computing the proximal point $\tilde{y}^{(k)}$ inexactly by means of the FISTA algorithm as in Section \ref{sec_exp1}. \silviacorr{Besides the Euclidean metric $D_k=I$ we consider two nontrivial choices of the scaling matrix.} In particular, we consider the diagonal scaling matrix whose generic $i-th$ element is defined as
\begin{equation}\label{SG_CauchyNoise}
 (D_k)^{-1}_{ii} = \max\left\{\min\left\{\frac{x^{(k)}_i}{V_i(x^{(k)})},\mu\right\}, \frac{1}{\mu}\right\}
\end{equation}
where $V(x^{(k)}) = \lambda H^Ts^{(k)}$ with $s^{(k)}_i = \frac{(Hx^{(k)})_i}{\gamma^2 + (Hx^{(k)} - g)^2_i}$. The scaling matrix defined in \eqref{SG_CauchyNoise} follows the gradient splitting idea already mentioned in Section \ref{sec_exp1}: the positivity of $V(x^{(k)})$ is ensured by the non-negative constraints and the properties of the blurring operator. \silviacorr{The other choice of the scaling matrix is $(D_k)_{ii}^{-1} = \max\{\min\{(A_k)_{ii},\mu\},\frac{1}{\mu}\}$ where the matrix $A_k$ is borrowed by the MM approach and it is given by formula (36) in \cite{Chouzenoux-etal-2014} where $\varepsilon = 0$ and the function $\omega$ is set equal to the function $\nu$ in the tenth row of Table 1 in \cite{Chouzenoux-Pesquet-2016}. In the following we will refer to the three scaling matrices described above as I, SG and MM respectively.}\\
The other parameters are set exactly as in section \ref{sec_exp1}.\\ 
In figure \ref{fig:fig8} we show the relative distance between the objective function values and the limit value $f^*$ computed by 5000 iterations of VMILAn with the MM metric. The benefits gained by using a variable metric are quite evident in terms of both number of iterations and computational time.\\
\silviacorr{As further benchmark we include in our comparison also the method VMFB where the majorant function is computed according to Lemma 5.1 in \cite{Chouzenoux-etal-2014} and \cite[Table 1]{Chouzenoux-Pesquet-2016}.}
\begin{figure}[htbp!]
\begin{center}
\begin{tabular}{cc}
\includegraphics[scale=0.25]{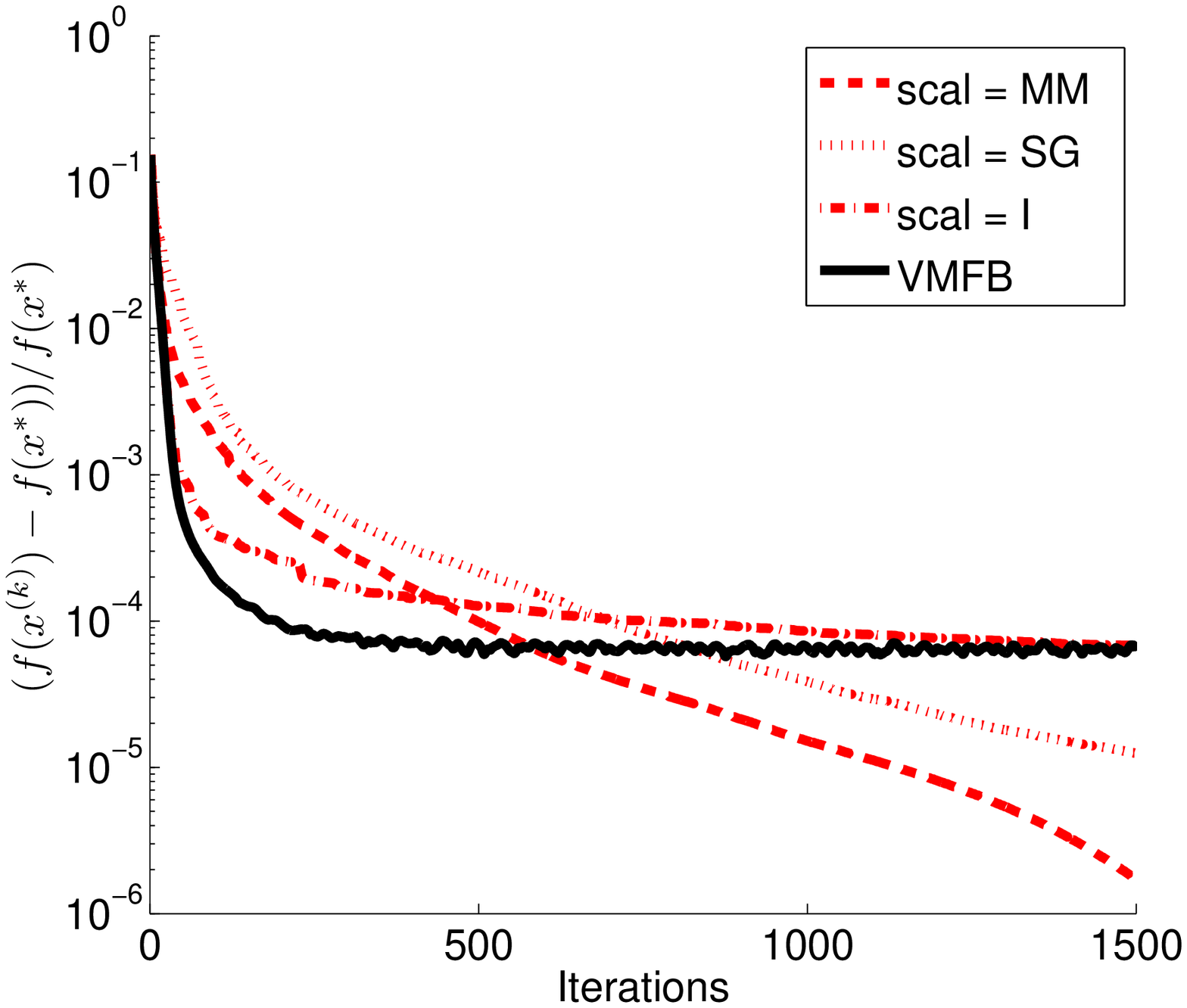}&\includegraphics[scale=0.25]{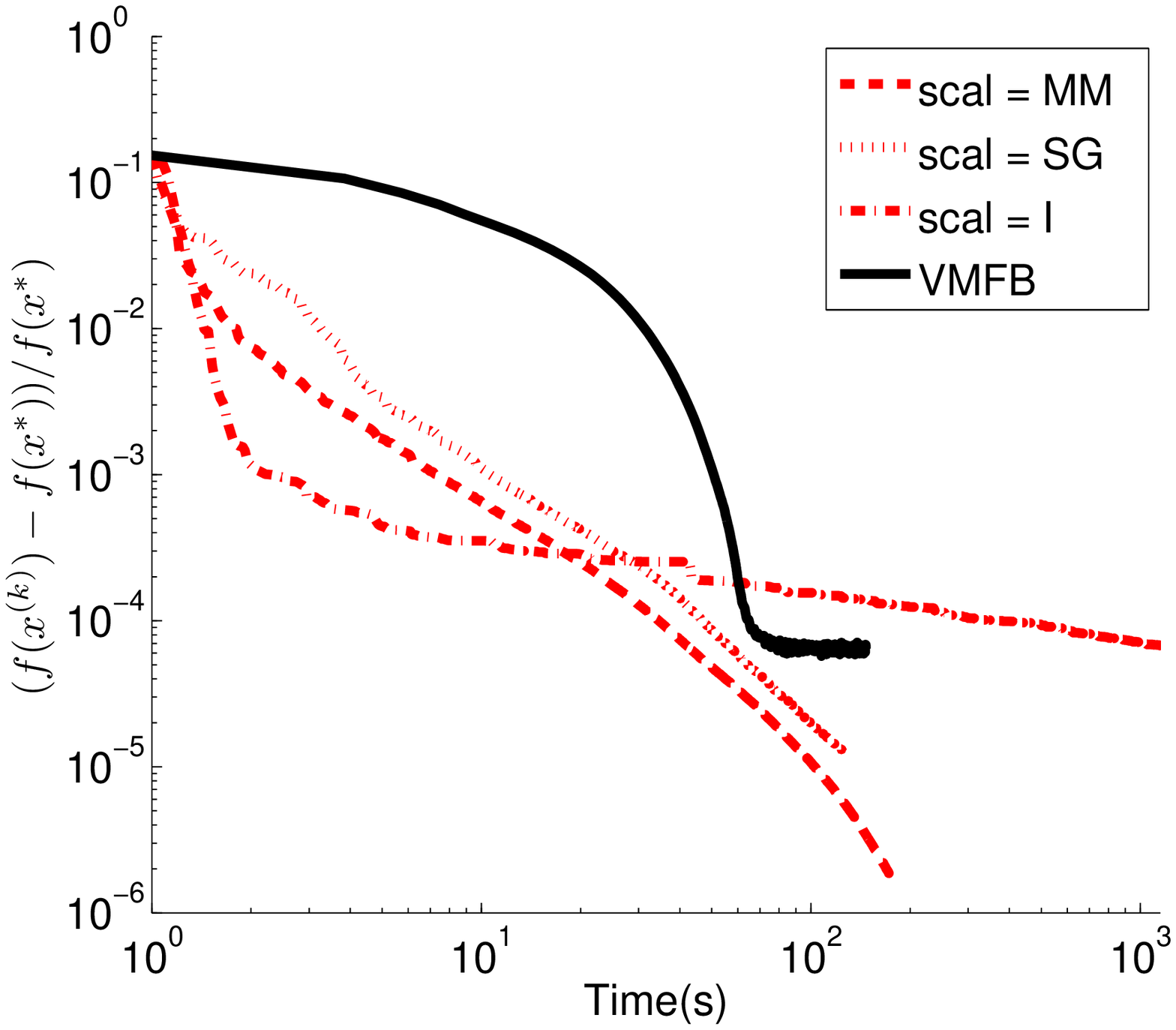}\\
\includegraphics[scale=0.25]{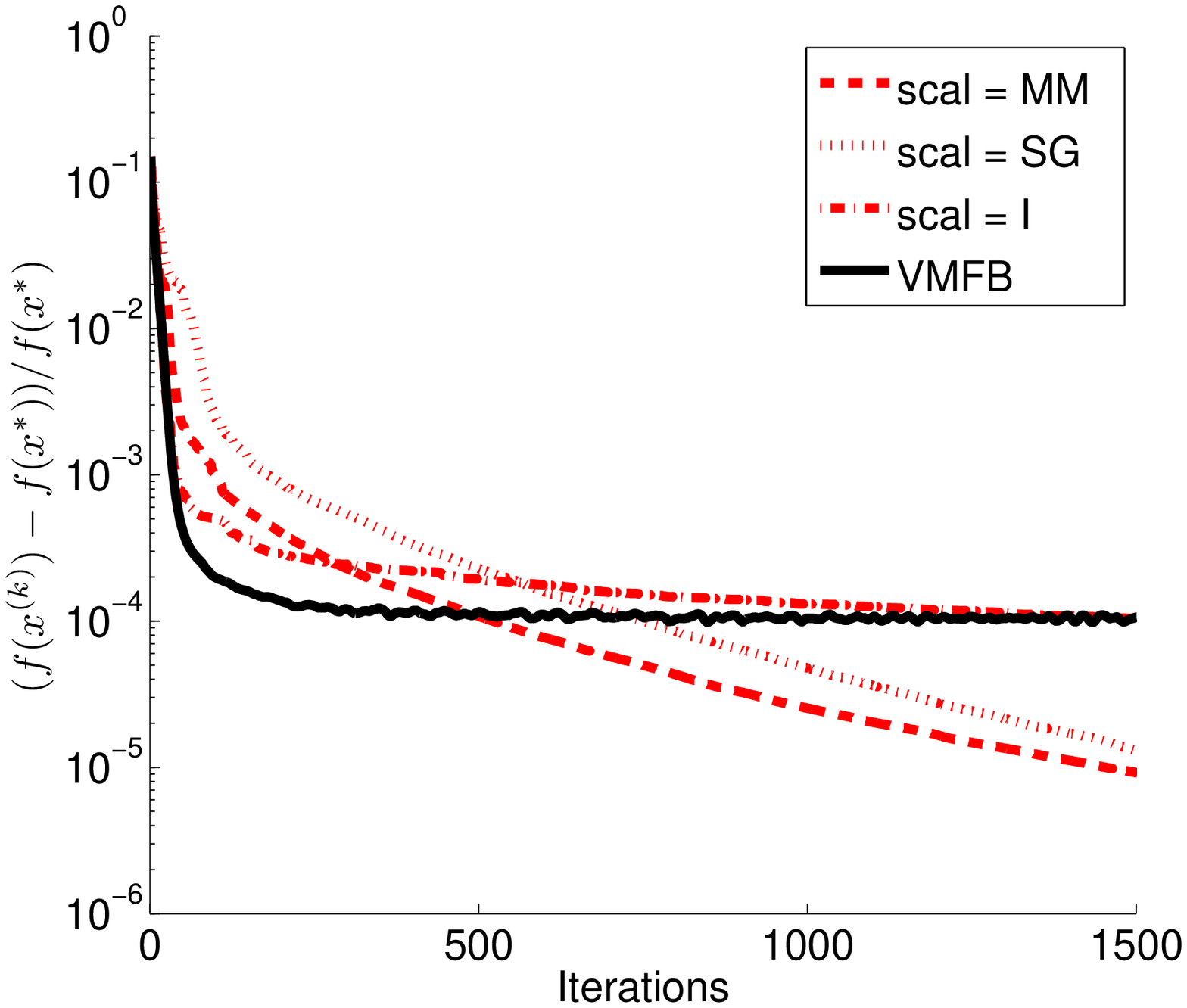}&\includegraphics[scale=0.25]{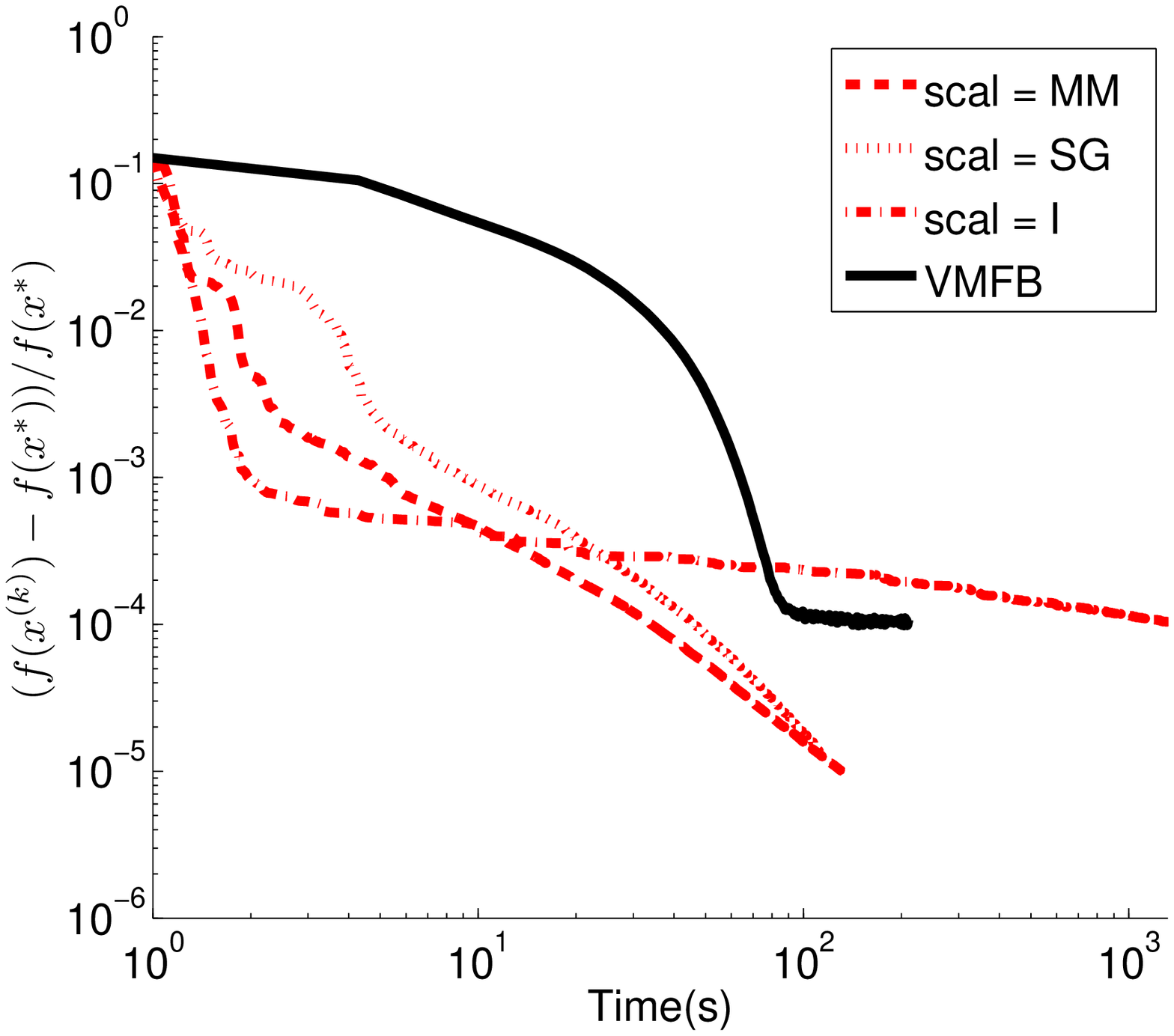}
\end{tabular}
\caption{Relative decrease of the objective function toward the minimum value with respect to the iteration number (left) and
computational time in seconds (right) for the Cauchy noise image restoration datasets: parrot (first row) and cameraman (second row).}\label{fig:fig8}
\end{center}
\end{figure}

%For the optimization problem \eqref{eq:CauchyNoise_minprob} we cannot employ the iPiano approach, since we do not know a closed formula for the proximal operator of $f_1$. 
However, to appreciate the validity of VMILAn as restoration method, in table \ref{tab:PSNR} we report the values of the peak signal-to-noise ratio (PSNR) related to the approximated solutions compared to the values shown in \cite{Sciacchitano-etal-2015} corresponding to the same two datasets. The PSNR is widely used in the literature to measure the image quality and is defined as
\begin{equation*}
{\rm{PSNR}}(x) = 10\log_{10}\frac{n|\max(x) - \min(x)|^2}{\|x_{\rm true} - x\|^2},
\end{equation*}
where $x_{\rm true}\in\mathbb{R}^{n}$ is the true object.
\begin{table}[htbp!]
\begin{center}
\begin{tabular}{lcccccc}
 & Data & VMILAn(I)&VMILAn(SG)&VMILAn(MM)&VMFB& \cite{Sciacchitano-etal-2015}\\
 \hline
Parrot & 18.23 & 26.67 & 26.70& 26.71&26.62& 26.79\\
Cameraman & 18.29 & 25.90 & 26.41 & 26.52 & 25.82 & 26.72 
\end{tabular}
\caption{PSNR values obtained by VMILAn in solving the Cauchy noise image restoration problems.}\label{tab:PSNR}
\end{center}
\end{table}
The PSNR values presented in table \ref{tab:PSNR} allow to say that the performances of VMILAn are comparable to those of the reference approach \cite{Sciacchitano-etal-2015}. The reconstructed images obtained with VMILAn (scal = MM) and related to the PSNR reported in table \ref{tab:PSNR} are shown in the right panel of figure \ref{fig:fig7}.

\section{Conclusions}\label{sec5}

In this paper we considered a variable metric linesearch based proximal-gradient algorithm recently proposed in \cite{Bonettini-Loris-Porta-Prato-2015} for the minimization of a class of nonconvex and nonsmooth functions. We revised the convergence analysis of this algorithm under the hypothesis of the objective function satisfying the KL property at each point of its domain, showing that any limit point is stationary and the sequence generated by the method converges to it. Since the KL requirements are quite general and are fulfilled by a large variety of functions, this result allows to generalize a similar one which was proved in \cite{Bonettini-Loris-Porta-Prato-2015} only for convex functions. In the second part of the paper we presented the results obtained by applying the considered algorithm in several numerical experiments dealing with nonconvex optimization problems in image processing. The comparison with other commonly used approaches demonstrated the efficiency of our method in terms of both speed of convergence and quality of the results.

\appendix
\section{Proof of Lemma \ref{keylemma}}\label{appendixA}
We first prove the following inequality
\begin{equation}\label{new_work2}
\frac{1}{4\alpha_{\max}\mu}\|\tyk-\xk\|^2 \leq  -(1+\tau)\thk(\tyk).
\end{equation}
We recall that $\hk$ is $\frac 1 {\alpha_k}$ strongly convex with respect to the norm induced by $D_k$, i.e.
\begin{equation}\label{strongly_convex}
\hk(\x)\geq \hk(\y) + \w^T(\x-y) +\frac 1{2\alpha_k}\|\x-\y\|^2_{D_k}, \ \ \ \forall w\in\partial \hk(\y).
\end{equation}
Since $\yk$ is the solution of \eqref{minhk} and, thus, $0\in\partial \hk(\yk)$, from the previous inequality with $\x=\tyk$ and $\y = \yk$ we have $\frac{1}{2\ak}\|\tyk-\yk\|^2_{D_k} \leq \hk(\tyk)-\hk(\yk)$ which, in view of \eqref{inexact}, gives
%By definition of $\yk$ as the minimum point of $\hk$ it follows that there exists $\wk\in \partial f_1(\yk)$ such that
%\begin{equation}\label{wk}
%\nabla f_0(\xk) = \frac 1 \ak D_k (\xk-\yk) - \wk.
%\end{equation}
%Then, we have
%\begin{eqnarray*}
%-\frac \tau 2 \thk(\tyk) \geq \hk(\tyk) - \hk(\yk) \\
%= \nabla f_0(\xk)^T(\tyk-\yk)+\frac 1{2\ak} (\|\tyk-\xk\|^2_{D_k} -\|\yk-\xk\|^2_{D_k})\\ \ \ \ + f_1(\tyk)-f_1(\yk)\\
%= \frac 1 \ak (\xk-\yk)^TD_k(\tyk-\yk) +\frac 1{2\ak} (\|\tyk-\xk\|^2_{D_k} -\|\yk-\xk\|^2_{D_k}) \\ \ \ \ + f_1(\tyk)-f_1(\yk) - (\tyk-\yk)^T\wk\\
%= \frac 1{2\ak} \|\tyk-\yk\|^2_{D_k} + f_1(\tyk)-f_1(\yk) - (\tyk-\yk)^T\wk,
%\end{eqnarray*}
%where the last equality follows from the basic norm relation
%\begin{equation}\nonumber%\label{3pointformula}
%\|a-b\|_D^2 +\|b-c\|_D^2-\|a-c\|^2_D = 2(a-b)^TD(c-b)
%\end{equation}
%with $a=\tyk$, $b= \yk$, $c= \xk$. Since $\wk\in\partial f_1(\yk)$, it follows that the quantity $f_1(\tyk)-f_1(\yk) - (\tyk-\yk)^T\wk$ is non-negative. Hence, we have
\begin{equation}\label{inexact_dist}
\frac{1}{2\ak}\|\tyk-\yk\|^2_{D_k}\leq -\frac \tau 2 \thk(\tyk).
\end{equation}
Exploiting again \eqref{strongly_convex} with $\x=\xk$ and $\y=\yk$, recalling that $\hk(\xk)=0$, we obtain
\begin{equation}\nonumber
\hk(\yk)\leq - \frac{1}{2\alpha_k}\|\xk-\yk\|_{D_k}^2.
\end{equation}
Combining the last inequality with \eqref{inexact} and using $\thk(\tyk)\leq\hk(\tyk)$ we obtain
%\begin{eqnarray*}
%\thk(\tyk) \leq \hk(\tyk) \leq  \hk(\yk) -\frac \tau 2 \thk(\tyk)\\
%= \nabla f_0(\xk)^T(\yk-\xk) + \frac{1}{2\ak}\|\xk-\yk\|^2_{D_k}+ f_1(\yk) - f_1(\xk)-\frac \tau 2 \thk(\tyk)\\
%= - \frac{1}{2\ak}\|\xk-\yk\|^2_{D_k}+ f_1(\yk) - f_1(\xk)- (\yk-\xk)^T\wk-\frac \tau 2 \thk(\tyk).
%\end{eqnarray*}
%Rearranging terms leads to
%\begin{equation*}
%\fl \frac{1}{2\ak}\|\xk-\yk\|^2_{D_k}+ f_1(\xk) - f_1(\yk)- (\xk-\yk)^T\wk\leq -(1+\frac \tau 2) \thk(\tyk).
%\end{equation*}
%Since $\wk\in \partial f_1(\yk)$, we have $f_1(\xk) - f_1(\yk)- (\xk-\yk)^T\wk\geq 0$, and the previous inequality implies
\begin{equation*}
\frac{1}{2\ak}\|\xk-\yk\|^2_{D_k}\leq -\left(1+\frac \tau 2\right)\thk(\tyk).
\end{equation*}
By combining the triangle inequality with the previous one we obtain
\begin{eqnarray*}
\|\xk-\tyk\|_{D_k} &\leq& \|\xk-\yk\|_{D_k} +\|\yk-\tyk\|_{D_k}\\
&\leq& \sqrt{-(2+\tau)\ak\thk(\tyk)}+\|\yk-\tyk\|_{D_k}
\end{eqnarray*}
which yields
\begin{eqnarray*}
\|\xk-\tyk\|_{D_k}^2 &\leq&  {-(2+\tau)\ak\thk(\tyk)}+\|\yk-\tyk\|_{D_k}^2+\\
 & & \ + 2\|\yk-\tyk\|_{D_k}\sqrt{-(2+\tau)\ak\thk(\tyk)}\\
&\leq& -2(2+\tau)\ak\thk(\tyk) + 2\|\yk-\tyk\|_{D_k}^2,
\end{eqnarray*}
where the last inequality follows from $2\sqrt{uv}\leq u + v$. Combining it with \eqref{inexact_dist} gives
\begin{eqnarray*}
\frac{1}{4\ak}\|\xk-\tyk\|_{D_k}^2 &\leq& -(1+\frac \tau 2)\thk(\tyk)  + \frac{1}{2\ak}\|\yk-\tyk\|_{D_k}^2\\
&\leq& -(1+\tau)\thk(\tyk).
\end{eqnarray*}
Finally, \eqref{new_work2} follows from $\frac{1}{4\ak}\|\xk-\tyk\|_{D_k}^2\geq \frac{1}{4\alpha_{\max}\mu}\|\xk-\tyk\|^2$.\\
We are now ready for giving the proof of \eqref{lambdastar}--\eqref{HH3bis}.\\
Since $\nabla f_0$ is $L-$Lipschitz continuous, we can combine the descent lemma with \eqref{new_work2} exactly as in \cite[Lemma 3.3, Proposition 3.2]{Bonettini-Loris-Porta-Prato-2015}, and conclude that there exist $c\in \R_{>0}$ and $\lambda_{\min}\in (0,1]$ such that
\begin{equation}\label{new_work3}
 f(\xk+\lambda \dk) \leq  f(\xk) +\lambda\left(1-cL(1+\tau)\lambda\right) \thk(\tyk), \ \forall \ \lambda\in[0,1]
\end{equation}
and \eqref{lambdastar} hold.
Then, \cite[Lemma 3.1]{Bonettini-Loris-Porta-Prato-2015} and \eqref{lambdastar} directly yields \eqref{new_work5}.\\
Let us prove \eqref{HH1}. Combining \eqref{new_work2} with the backtracking rule \eqref{Armijo} immediately yields
\begin{equation}
f(x^{(k)}+\lambda_k d^{(k)})\leq f(x^{(k)})-\frac{\beta\lambda_k}{4\alpha_{\max}\mu(1+\tau)}\|\tyk-x^{(k)}\|^2.
\label{almostH1}
\end{equation}
Because of \eqref{SGPmodified}, it is either $x^{(k+1)}=\tyk$ or $x^{(k+1)}=x^{(k)}+\lambda_k d^{(k)}$. In both cases, since $\lamk\in[\lambda_{\min},1]$, we have
\begin{equation}\label{dk_ine}
\|\xkk-\xk\|\leq \|\tyk-\xk\| %\leq \frac{1}{\lamk}\|\xkk-\xk\|\leq \frac{1}{\lambda_{\min}}\|\xkk-\xk\|
\end{equation}
which leads to
\begin{equation}\nonumber
f(x^{(k)}+\lambda_k d^{(k)})\leq f(x^{(k)})-\frac{\beta\lambda_{\min}}{4\alpha_{\max}\mu(1+\tau)}\|x^{(k+1)}-x^{(k)}\|^2.
\end{equation}
Then, \eqref{HH1} follows by taking $a=\frac{\beta\lambda_{\min}}{4\alpha_{\max}\mu(1+\tau)}$ and using \ref{step5} of Algorithm \ref{algo:nuSGP} which implies $f(x^{(k+1)})\leq f(x^{(k)}+\lambda_k d^{(k)})$.\\
In order to show that \eqref{HH2} holds, consider the right inequality in \eqref{new_work3} with $\lambda=1$. If $1-cL(1+\tau)\geq 0$, then the right inequality of condition \eqref{HH2} follows with $\eta_k\equiv 0$, while if $1-cL(1+\tau)< 0$, then the inequality is satisfied by setting $\eta_k=\left(1-cL(1+\tau)\right) \thk(\tyk)$ and observing that \eqref{new_work5} guarantees that $\lim_{k \to \infty}\eta_k = 0$.
The left inequality of \eqref{HH2} follows from the definition of $\xkk$ at \ref{step5} of Algorithm \ref{algo:nuSGP}.\\
In the following we prove \eqref{HH3bis}. By rewriting function $h^{(k)}$ as
\begin{equation*}
h^{(k)}(y)=f_1(y)+\frac{1}{2\alpha_k}\|y-z^{(k)}\|_{D_k}^2-\frac{\alpha_k}{2}\|\nabla f_0(\xk)\|_{D_k^{-1}}^2-f_1(\xk),
\end{equation*}
where $\zk = \xk-\ak D_k^{-1}\nabla f_0(\xk)$,
we can apply \cite[Theorem 2.8.7]{Zalinescu-2002} and \cite[Chapter XI, Equation 1.2.5]{Hiriart-1993} to compute the $\epsilon_k$-subdifferential of $h^{(k)}$:
\begin{eqnarray}
\partial_{\epsilon_k}h^{(k)}(y)&=\!\!\bigcup \limits_{0\leq \bar{\epsilon}_k+\hat{\epsilon}_k\leq \epsilon_k}\!\!\partial_{\bar{\epsilon}_k}f_1(y)+\partial_{\hat{\epsilon}_k}\left(\frac{1}{2\alpha_k}\|y-z^{(k)}\|_{D_k}^2\right)\nonumber\\
&=\!\!\bigcup \limits_{0\leq \bar{\epsilon}_k+\hat{\epsilon}_k\leq \epsilon_k}\!\!\partial_{\bar{\epsilon}_k}f_1(y)+\left\{\frac{1}{\alpha_k}D_k(y-z^{(k)}+e): \frac{\|e\|_{D_k}^2}{2\alpha_k}\leq \hat{\epsilon}_k\right\}.\label{epssubrule}
\end{eqnarray}
The point $\tyk$ satisfies condition \eqref{inexact} if and only if $0\in \partial_{\epsilon_k}h^{(k)}(\tyk)$, where $\epsilon_k = -\frac\tau 2 \thk(\tyk)$. Thanks to \eqref{epssubrule}, this ensures that there exist $\bar{\epsilon}_k,\hat{\epsilon}_k$ as above, $e^{(k)}\in \R^n$ satisfying $\frac{\|e^{(k)}\|_{D_k}^2}{2\alpha_k}\leq \hat{\epsilon}_k$ and $w^{(k)}\in \partial_{\bar{\epsilon}_k}f_1(\tyk)$ such that
\begin{equation}\label{perturbed_wk}
w^{(k)}=\frac{1}{\alpha_k}D_k(z^{(k)}-\tyk+e^{(k)}).
\end{equation}
Set $v^{(k)}=\nabla f_0(\tyk)+w^{(k)}$. By using the Lipschitz continuity of $\nabla f_0$, the fact that $\alpha_k\in [\alpha_{\min},\alpha_{\max}]$ and $D_k\in \mathcal{M}_{\mu}$, we have:
\begin{eqnarray*}
\|v^{(k)}\|&=\|\nabla f_0(\tyk)+\frac{1}{\alpha_k}D_k(\xk-\alpha_k D_k^{-1}\nabla f_0(\xk)-\tyk+e^{(k)})\|=\\
&=\|\nabla f_0(\tyk)-\nabla f_0(\xk)+\frac{1}{\alpha_k}D_k(\xk-\tyk+e^{(k)})\|\\
&\leq L\|\xk-\tyk\|+\frac{\mu}{\alpha_k}(\|\xk-\tyk\|+\|e^{(k)}\|)\\
&\leq \left(L+\frac{\mu}{\alpha_{\min}}\right)\|\xk-\tyk\|+\frac{\mu}{\alpha_{\min}}\sqrt{\mu}\|e^{(k)}\|_{D_k}\\
&\leq \frac{1}{\lambda_{\min}}\left(L+\frac{\mu}{\alpha_{\min}}\right)\|\xkk-\xk\|+\left(\frac{\sqrt{2\mu^3\alpha_{\max}}}{\alpha_{\min}}\right)\sqrt{\hat{\epsilon}_k}.
\end{eqnarray*}
The thesis follows by choosing $b=\frac{1}{\lambda_{\min}}\left(L+\frac{\mu}{\alpha_{\min}}\right)$, $\zeta_k=\left(\frac{\sqrt{2\mu^3\alpha_{\max}}}{\alpha_{\min}}\right)\sqrt{\hat{\epsilon}_k}$ for all $k\in \N$ and by observing that, since
\begin{equation*}
0\leq\zeta_k \leq \left(\frac{\sqrt{2\mu^3\alpha_{\max}}}{\alpha_{\min}}\right)\sqrt{\epsilon_k}= \left(\frac{\sqrt{2\mu^3\alpha_{\max}}}{\alpha_{\min}}\right)\sqrt{-\frac{\tau}{2}\thk(\tyk)}
\end{equation*}
and, because of \eqref{new_work5}, $\lim\limits_{k\rightarrow \infty} \thk(\tyk)=0$, then also $\lim\limits_{k\rightarrow \infty}\zeta_k=0$.

\section{Proof of Proposition \ref{real_lemma5}}\label{appendixB}

Since $f$ is lower semicontinuous and bounded from below, and $\{f(\xk)\}_\kinN$, from \eqref{HH1}, is monotone nonincreasing, we have that $ \lim_{k\to\infty} f(\xk)$ exists and $f(\bar\x) \leq \lim_{k\to\infty} f(\xk)$. Let us show that also the opposite inequality holds. By summing inequality \eqref{HH1} from $k=0$ to $N$ we obtain
\begin{eqnarray*}
a\sum_{k=0}^N \|\xkk-\xk\|^2 &\leq& \sum_{k=0}^N f(\xk)-f(\xkk)= f(\x^{(0)}) - f(\x^{(N+1)}).
\end{eqnarray*}
Taking limits for $N\rightarrow\infty$ on both sides gives
\begin{equation}\label{serie}
a\sum_{k=0}^\infty \|\xkk-\xk\|^2 \leq f(\x^{(0)})- f(\bar\x)<\infty \Rightarrow\lim_{k\to\infty}\|\xkk-\xk\|=0.
\end{equation}
\silviacorr{Let $\vk= \nabla f_0(\tyk) + \wk$, with $\wk\in\partial_{\bar\epsilon_k} f_1(\tyk)$, $\bar\epsilon_k\leq -\frac \tau 2 \thk(\tyk)$ satisfying inequality \eqref{HH3bis}. Then, by combining \eqref{HH3bis} and \eqref{serie} we obtain
\begin{equation}\label{limvk}
\lim_{k\to\infty}\nabla f_0(\tyk) +\wk = \lim_{k\to\infty} \vk = 0.
\end{equation}
Let $\{\x^{(k_j)}\}_{j\in\N}$ be a subsequence of $\{\xk\}_\kinN$ such that $\lim_{j\to\infty}\x^{(k_j)} = \bar\x$. Using \ref{step5} of Algorithm \ref{algo:nuSGP} and recalling that $\lambda_k\in[\lambda_{\min},1]$, we have
\begin{equation}\label{ykconv0}
\lambda_{\min}^2\|\tyk-\xk\|^2 \leq  \lamk^2\|\tyk-\xk\|^2 \leq \|\xkk-\xk\|^2.
\end{equation}
Inequality \eqref{ykconv0}, combined with \eqref{serie}, gives $\lim_{k\to\infty} \|\tyk-\xk\| = 0$. Then, we also have $\lim_{j\to\infty}\ty^{(k_j)} = \bar\x$. Thus, by \eqref{limvk} and by continuity of $\nabla f_0$, we can write
\begin{equation}\label{technical}
\lim_{j\to\infty} \w^{(k_j)} = -\nabla f_0(\bar\x).
\end{equation}
Since $\w^{(k_j)}\in\partial_{\bar\epsilon_k} f_1(\ty^{(k_j)})$, we have
\begin{eqnarray}
f_1(\bar\x)&\geq&f_1(\ty^{(k_j)}) + (\bar\x - \ty^{(k_j)})^T \w^{(k_j)}-{\bar\epsilon_{k_j}}\nonumber\\
&\geq& f(\x^{(k_{j}+1)})-f_0(\ty^{(k_j)})+ (\bar\x - \ty^{(k_j)})^T \w^{(k_j)}-{\bar\epsilon_{k_j}},\label{exploit_convexity}
\end{eqnarray}
where the second inequality follows from $ f(\x^{(k_j+1)}) \leq f(\ty^{(k_j)})=f_0(\ty^{(k_j)})+f_1(\ty^{(k_j)})$. Taking the limit of the right-hand-side for $j\to\infty$, and recalling \eqref{new_work5} which implies $\lim_{j\to\infty}\bar \epsilon_{k_j}=0$, we obtain
\begin{equation*}
f_1(\bar\x) \geq \lim_{j\to \infty} f(\x^{(k_j+1)}) -f_0(\bar\x) = \lim_{k\to \infty} f(\xk) -f_0(\bar\x)
\end{equation*}
which reads also as $f(\bar\x)\geq \lim_{k\to \infty} f(\xk)$ and completes the first part of the proof.\\
As for the second part, since $\lim_{j\to\infty}\ty^{(k_j)} = \bar\x$, $\lim_{j\to\infty}\bar \epsilon_{k_j}=0$ and \eqref{technical} holds, we can apply Remark \ref{rem3}(iii) and thus obtain
\begin{equation}
-\nabla f_0(\bar\x)\in \partial f_1(\bar\x)
\end{equation}
which is equivalent to $0\in\partial f(\bar\x)$.}

%\begin{proof}
%As seen in Lemma \ref{keylemma}, Algorithm \ref{algo:nuSGP} satisfies conditions \eqref{HH1}-\eqref{HH2} and \eqref{HH3bis}, which is a slight variant of condition \eqref{HH3}. This allows to prove (i)-(ii) exactly as in Lemma \ref{real_lemma5}. Indeed, let $\{x^{(k_j)}\}_{j\in \N}$ be a subsequence of $\{\xk\}_{k\in \N}$ converging to $\bar{x}$. By setting $w^{(k)}$ as in \eqref{perturbed_wk}, $v^{(k)}=\nabla f_0(\tyk)+w^{(k)}$ and following the same passages done in \eqref{serie}-\eqref{ykconv0}, one can prove that
%\begin{equation*}
%\lim_{j\to\infty} \w^{(k_j)} = -\nabla f_0(\bar\x).
%\end{equation*}
%Since $w^{(k_j)}\in \partial_{\bar{\epsilon}_{k_j}}f_1(\tilde{y}^{(k_j)})$, one can apply the definition of $\bar{\epsilon}_{k_j}$-subdifferential of $f_1$ as in \eqref{exploit_convexity}, obtaining
%\begin{equation*}
%f_1(\bar\x)\geq f(\x^{(k_{j}+1)})-f_0(\ty^{(k_j)})+ (\bar\x - \ty^{(k_j)})^T \w^{(k_j)}-\bar{\epsilon}_{k_j}.
%\end{equation*}
%By taking the limit as $j\rightarrow \infty$ and observing that $\lim_{j\rightarrow \infty} \bar{\epsilon}_{k_j}=0$, we come to $f(\bar{x})\geq \lim_{k\rightarrow \infty} f(x^{(k)})$, which concludes the proof the part (i). Part (ii), i.e. the stationarity of the limit points, is ensured by \cite[Theorem 2.4.2(ix)]{Zalinescu-2002}.
%\end{proof}
%

\section*{Acknowledgments}

We would like to thank the anonymous reviewers for their valuable remarks and suggestions. This work has been partially supported by MIUR under the two projects FIRB - Futuro in Ricerca 2012, contract RBFR12M3AC and PRIN 2012, contract 2012MTE38N. I. Loris is a Research Associate of the Fonds de la Recherche Scientifique - FNRS. The Italian GNCS - INdAM is also acknowledged, as well as a ULB ARC grant.

\section*{References}

\bibliographystyle{unsrt}
\bibliography{paper,nonconvex}

\end{document}